\newcommand{\binom}[2]{{#1 \choose #2}}
\providecommand{\tabularnewline}{\\}
\newtheorem{theorem}{Theorem}
\newtheorem{condition}{Condition}
\newtheorem{corollary}{Corollary}
\newtheorem{example}{Example}
\newtheorem{lemma}{Lemma}
\newtheorem{proposition}{Proposition}
\newenvironment{proof}[1][Proof]{\textbf{#1.} }{\ \rule{0.5em}{0.5em}}
\begin{document}

\title{Estimation for the Prediction of Point Processes with Many Covariates}

\author{Alessio Sancetta\thanks{Acknowledgement: I would like to thank the Co-Editor, the referees
and Luca Mucciante for comments that led to substantial improvements
both in content and presentation. E-mail: <asancetta@gmail.com>, URL:
<http://sites.google.com/site/wwwsancetta/>. Address for correspondence:
Department of Economics, Royal Holloway University of London, Egham
TW20 0EX, UK. }\\
}
\maketitle
\begin{abstract}
Estimation of the intensity of a point process is considered within
a nonparametric framework. The intensity measure is unknown and depends
on covariates, possibly many more than the observed number of jumps.
Only a single trajectory of the counting process is observed. Interest
lies in estimating the intensity conditional on the covariates. The
impact of the covariates is modelled by an additive model where each
component can be written as a linear combination of possibly unknown
functions. The focus is on prediction as opposed to variable screening.
Conditions are imposed on the coefficients of this linear combination
in order to control the estimation error. The rates of convergence
are optimal when the number of active covariates is large. As an application,
the intensity of the buy and sell trades of the New Zealand dollar
futures is estimated and a test for forecast evaluation is presented.
A simulation is included to provide some finite sample intuition on
the model and asymptotic properties. 

\textbf{Key Words:} Cox process; counting process; curse of dimensionality;
forecast evaluation; greedy algorithm; Hawkes process, high frequency
trading; martingale; trade arrival; variable selection.

\textbf{JEL Codes:} C13; C32; C55.
\end{abstract}

\section{Introduction}

Suppose that you want to estimate and then predict the likelihood
of a trade arrival for some financial instrument, which trades relatively
frequently. The reason for doing so could be market making or optimal
execution; such problems are quite common in the financial industry.
For example, in an application to be considered here, the instrument
is the futures on the New Zealand Dollar. A trade arrival for such
an instrument may depend on the state of the order book, which contains
5 levels on the bid and the offer. It may also depend on what happens
on other related instruments and past price and quoted volumes dynamics
as well as on past trades. The number of possible covariates can grow
quickly and become relatively large even for high frequency data. 

Problems such as the one just described can be addressed considering
trade arrivals as the jump of a counting process whose intensity (the
mean over an infinitesimal time period) depends on a set of covariates.
This paper considers the estimation of such counting processes for
problems where data are time series, the number of covariates is large,
and the functional form of the intensity does not need to be parametric. 

Let $\left(N\left(t\right)\right)_{t\geq0}$ be a counting process
with intensity measure 
\begin{equation}
\Lambda\left(A\right)=\int_{A}\exp\left\{ g_{0}\left(X\left(t\right)\right)\right\} dt,\label{EQ_intensityRepresentation}
\end{equation}
for any Borel set $A\subseteq[0,\infty)$, where $g_{0}$ is an unknown
function, $X\left(t\right)$ are $K$ dimensional covariates that
can depend on $t$. The intensity in (\ref{EQ_intensityRepresentation})
is understood to mean 
\[
\lim_{s\downarrow0}\frac{\Pr\left(N\left(t+s\right)-N\left(t\right)=1|\mathcal{F}_{t}\right)}{s}=\exp\left\{ g_{0}\left(X\left(t\right)\right)\right\} ,
\]
where $\mathcal{F}_{t}$ is the sigma algebra generated by $\left(N\left(s\right),X\left(s\right)\right)_{s\leq t}$.
Given that the covariates are time dependent, the intensity may depend
on the time elapsed from the last jump of $N\left(t\right)$. The
covariates are predictable, for example, adapted left continuous processes.
If conditioning on the covariates $X$, the process is Poisson, then
the counting process is usually referred to as Cox or doubly stochastic
process. 

Define the stopping times $T_{i}:=\inf\left\{ s>0:N\left(s\right)\geq i\right\} $,
$T_{0}=0$, i.e. $T_{i}$ is the the time of the $i^{th}$ jump. In
the empirical financial microstructure application to be considered
in this paper, the jump time $T_{i}$ is the time of the $i^{th}$
trade arrival for a specific security, and the covariates will be
information extracted from order book, amongst other quantities. The
statistical problem is the one where one observes $\left(N\left(t\right),X\left(t\right)\right)$
up to time $T$. By definition of the stopping times, waiting until
$T=T_{n}$ means that one observes $n$ jumps. The goal is to estimate
$g_{0}$. This function $g_{0}$ is only known to lie in some class
of additive functions, which will be introduced in due course. The
covariates and the durations between jumps are supposed to be stationary,
but neither independent nor Markovian. 

The time series problem where only one trajectory of the process is
observed and $g_{0}$ in (\ref{EQ_intensityRepresentation}) is possibly
nonlinear, and the number of covariates is large has not been previously
discussed in the literature. The framework allows us to deal with
ultra-high dimensional problems where the number of covariates is
exponentially larger than the the sample size ($n$ when $T=T_{n}$).
The covariates could be time series and lagged variables. This setup
is motivated by many applied problems such as the previously mentioned
trading arrival estimation problem (e.g., Bauwens and Hautsch, 2009,
for a survey and references for counting models applied to finance).
A traded instrument may depend on updates and information from other
instruments. This leads to a proliferation of the possible number
of variables even though one might expect that either only a handful
of them might be relevant, or many covariates could explain the intensity,
but with decreasing degree of importance. In the modelling application
in Section \ref{Section_EmpiricalApplication}, in one case, one ends
up with more than one thousand variables, with the number of trades
$n$ of about a thousand. 

The main technical features of the present study are: 1. estimation
of $g_{0}$ in (\ref{EQ_intensityRepresentation}), when $g_{0}$
is only known to lie in some large set of functions; 2. the number
of covariates is allowed to be larger than the number of observed
durations $n$; 3. a class of additive functions is defined, and it
is shown that within this class one can obtain convergence rates that
are optimal in the high dimensional case; 4. the estimation problem
can be solved by the Frank-Wolfe algorithm and rates of convergence
are given; 5. an empirical study provides applicability of the methodology
and a test for forecast superiority between counting models, showing
that suitably constrained large models can perform better out of sample. 

From a theoretical point of view, restrictions on the absolute summability
of linear coefficients (the $l_{1}$ norm of the coefficients) in
the additive model are imposed. Such Lasso kind of constraint tends
to produce models that are sparse. This means that if all the coefficients
are nonzero, but small, then tightening the constraint leads to many
variables that are zero and a few nonzero variables. On the other
hand, it is well known that tightening a constraint on the square
sum of the coefficients (i.e., $l_{2}$ norm as in ridge regression)
leads to all coefficients being small, but none being zero. 

From an empirical point of view, the paper considers estimation of
the intensity for the arrival of buy and sell trades on the New Zealand
dollar futures contract. The intensity is modelled using many covariates,
of the same order of magnitude as the number of durations. Estimation
of the intensity for buy and sell orders has been considered in the
literature (e.g., Hall and Hautsch 2007). However, no study seems
to consider market information (e.g., the order book) on the traded
instrument as well as other related instruments. The out of sample
results show that information provided by additional instruments is
relevant. To evaluate the out of sample performance of competing models,
an out of sample test based on the likelihood ratio is used. 

Details concerning the proofs and derivations in the main text are in the Supplementary Material. 

\subsection{Relation to the Literature}

In the regression context, high dimensional additive modelling has
been considered in the literature (e.g., Bühlmann and van de Geer,
2011, and references therein). This paper seems to be the first to
consider estimation with many covariates, allowing for a nonlinear
link function in a time series context. Here, time series means that
only one single realization of the process is observed over a window
expanding in the future. This framework differs from the one of Cox
proportional hazard model and Aalen multiplicative and additive model.
In that context, estimation with many variables has been considered
by various authors (e.g., Bradic et al., 2011, Gaiffas and Guilloux,
2012, amongst others). There, the focus is often in recovering the
true subset of active variables. This often results in stringent restrictions
on the covariates design and cross-dependence. Here, the focus is
on prediction and on weak conditions that can lead to consistency
even when the number of non-negligible covariates grows with the sample
size. Moreover, beyond additivity, the estimation considered here
is very general. Section \ref{Section_examplesApplications} provides
an overview of the applications. These include linear models, Hawkes
processes with covariates, threshold models, and additive monotone
functions amongst other possibilities. 

The analysis of estimators of the intensity function usually relies
on martingale methods (Andersen and Gill, 1982, van de Geer, 1995).
In the context of a fixed number of covariates, nonparametric estimators
are not uncommon (e.g., Nielsen and Linton, 1995, Fan et al., 1997,
are early references). 

The results derived here apply to parametric as well as to certain
nonparametric classes of functions. In the financial econometrics
literature, interest often lies in parametric modelling of a single
point process (e.g., Bauwens and Hautsch, 2009, for a survey). Hence,
the current paper considers the time series problem as in the financial
econometrics literature, but allowing for possibly nonparametric estimation
and for a large number of covariates as done in high dimensional statistics. 

In a time series context, the intensity is often modelled by Hawkes
processes. Loosely speaking, the intensity can be written as a predictable
function of durations (e.g., Bauwens and Hautsch, 2009). The framework
of this paper allows the aforementioned variables to be covariates. 

\subsection{Likelihood Estimation\label{Section_likelihood} }

It is well known (e.g., Brémaud, 1981, Ch.II, Theorem 16) that $\left\{ \Lambda\left((T_{i-1},T_{i}]\right):i\in\mathbb{N}\right\} $
($\Lambda$ as in (\ref{EQ_intensityRepresentation})) is i.i.d. exponentially
distributed with mean 1 when we condition on $\Lambda$. The likelihood
is easily derived from here assuming that $\Lambda$ has density $\lambda$
with respect to the Lebesgue measure (e.g., Ogata, 1978, eq.1.3).

Define the population log-likelihood 

\begin{equation}
L\left(g\right):=\mathbb{E}g\left(X\left(0\right)\right)\exp\left\{ g_{0}\left(X\left(0\right)\right)\right\} -\mathbb{E}\exp\left\{ g\left(X\left(0\right)\right)\right\} ,\label{EQ_populationLikelihood}
\end{equation}
assuming the expectations are well defined (see Section \ref{Section_proofPopulationLikelihood}
in the supplementary material). Suppose that $g_{0}$ in (\ref{EQ_intensityRepresentation})
lies in a set $\mathcal{G}$, momentarily assumed to be countable
to avoid distracting technicalities. Then, $g_{0}=\arg\sup_{g\in\mathcal{G}}L\left(g\right)$
using concavity of the log-likelihood. Given that expectations are
unknown, the above is replaced by the empirical estimator $g_{T}:=\arg\sup L{}_{T}\left(g\right)$,
where the $\sup$ is over some class of functions to be defined in
the next section and the sample likelihood is 
\begin{equation}
L_{T}\left(g\right):=\int_{0}^{T}g\left(X\left(t\right)\right)dN\left(t\right)-\int_{0}^{T}\exp\left\{ g\left(X\left(t\right)\right)\right\} dt,\label{EQ_negativeProfileLikelihood}
\end{equation}
where $L\left(g\right)=\lim_{T}L{}_{T}\left(g\right)/T$ almost surely
(see Section \ref{Section_proofPopulationLikelihood} in the supplementary
material for the proof of this statement). Supposing that one waits
until a time $T_{n}$ such that $N\left(T_{n}\right)=n$, the above
can be written as 
\[
L_{T_{n}}\left(g\right):=\sum_{i=1}^{n}\left[g\left(X\left(T_{i}\right)\right)-\int_{T_{i-1}}^{T_{i}}\exp\left\{ g\left(X\left(t\right)\right)\right\} dt\right].
\]
The representation in the last display is useful for actual computations,. 

\subsection{Outline of the Paper}

The plan for the paper is as follows. Section \ref{Section_modelg_0}
defines the model for the estimator and states the goal of the paper.
Section \ref{Section_MainResults} states the consistency result and
its optimality. A greedy algorithm is discussed as a method to carry
out the estimation in practice. Section \ref{Section_examplesApplications}
shows applications of the main result to a variety of estimation problems
and derives the convergence rates. Additional details are also given.
For example, an out of sample test based on the likelihood ratio is
suggested for forecast evaluation. Section \ref{Section_EmpiricalApplication}
applies the estimation procedure to the intensity of buy and sell
trades. Section \ref{Section_simulations} provides some finite sample
evidence to better understand the role of the different parameters
in the estimation. Section \ref{Section_Conclusion} contains some
further remarks. Proofs of the results are in Section \ref{Section_ProofsAppendix}
of the supplementary material. 

\section{The Model\label{Section_modelg_0}}

The goal is to allow for simple interpretation of the impact of the
covariates on the intensity. A good level of interpretability is gained
by letting $g\left(x\right)$ be linear in $x$. However, the impact
of each covariates might be nonlinear. For example, nonlinearities
are documented in many applications such as high frequency financial
data (e.g., Hasbrouck, 1991, Lillo et al., 2003). Whether, these nonlinearities
affect the intensity would depend on the application. An additive
nonlinear model is considered as a reasonable trade off between interpretability
and the possibility of nonlinear relations. In this case, $g\left(x\right)=\sum_{k=1}^{K}g^{\left(k\right)}\left(x\right)$,
where the $g^{\left(k\right)}$'s are some bounded functions, possibly
zero and for each $k$, $g^{\left(k\right)}\left(x\right)$ only depends
on $x_{k}$, the $k^{th}$ coordinate of $x=\left(x_{1},x_{2},...,x_{K}\right)$
(i.e., with abuse of notation, $g^{\left(k\right)}\left(x\right)=g^{\left(k\right)}\left(x_{k}\right)$).
This is done to reduce the notational burden. 

\subsection{Representation for Additive Functions\label{Section_functionRepresentation}}

For the purpose of controlling the estimation error, it is necessary
to impose some structure on the set within which the additive functions
are supposed to lie. Functions with the following structure are considered
\begin{equation}
g^{\left(k\right)}\left(x\right)=\sum_{\theta\in\Theta_{k}}b_{\theta}\theta\left(x\right)\label{EQ_gkRepresentation}
\end{equation}
where $\Theta_{k}$ is a set of functions that depends only on $x_{k}$,
$\Theta_{k}$ is a possibly uncountable set, and the $b_{\theta}$'s
are real valued coefficients. Given that $\Theta_{k}$ can be uncountable,
the above representation is more general than a standard series expansion.
The sum is understood to mean 
\[
\sum_{\theta\in\Theta_{k}}b_{\theta}:=\sup\left\{ \sum_{\theta\in F}b_{\theta}:\mathcal{H}\subseteq\Theta_{k},\,\mathcal{H}\,\text{is finite}\right\} .
\]
For example, we could have $g_{k}=b_{\theta}\theta$ for some $\theta\in\Theta_{k}$,
where $\Theta_{k}$ is a model, possibly infinite dimensional. In
consequence of the additive structure of $g$,
\begin{equation}
g\left(x\right)=\sum_{k=1}^{K}\left(\sum_{\theta\in\Theta_{k}}b_{\theta}\theta\left(x\right)\right),\label{EQ_g_additiveForm}
\end{equation}
where the terms in the parenthesis are just $g^{\left(k\right)}$
in (\ref{EQ_gkRepresentation}), which is a function that depends
on the $k^{th}$ covariate only. This structure is suitable for estimation.
Estimation within this framework requires choice of the $b_{\theta}$'s
as well as the $\theta$'s. For practical purposes the latter might
be simple parametric functions or fixed functions rather than general
infinite dimensional models. Details and examples are postponed to
Section \ref{Section_examplesApplications}. The interested reader
could skim through that section for an overview. In order to impose
general restrictions, suppose that the user fixes a set of weights
$\mathcal{W}:=\left\{ w_{\theta}\in\left(0,\infty\right):\theta\in\Theta\right\} $,
where $\Theta:=\bigcup_{k=1}^{K}\Theta_{k}$. This means that the
weights $w_{\theta}$ can be different for each function $\theta$
of the $k^{th}$ explanatory variable. Then, define $\mathcal{L}\left(B\right)=\mathcal{L}\left(B,\Theta,\mathcal{W}\right):=\left\{ \sum_{\theta\in\Theta}b_{\theta}\theta:\sum_{\theta\in\Theta}w_{\theta}\left|b_{\theta}\right|\leq B,w_{\theta}\in\mathcal{W}\right\} $.
This is a subset of the functions in (\ref{EQ_g_additiveForm}) such
that the weighted absolute sum of the coefficients is bounded by some
finite constant $B>0$. The weights are often used to control the
importance of each $\theta$. For example, one can let $w_{\theta}^{2}=Var\left(\theta\left(X\left(t\right)\right)\right)$
so that, intuitively, all functions have the same importance. A bound
on the weighted absolute sum of the regression coefficients is common
in Lasso estimation (e.g., Bühlmann and van de Geer, 2011). 

\begin{example}\label{Example_linearModel}Let $g\left(x\right)=\sum_{k=1}^{K}b_{k}X_{k}$
and $\pi_{k}$ be the map such that $\pi_{k}x=x_{k}$ for any $x\in\mathbb{R}^{K}$
and $x_{k}$ is the $k^{th}$ element in $x$. Then, $\Theta_{k}:=\left\{ \pi_{k}\right\} $
contains a single function that maps $x\in\mathbb{R}^{K}$ into its
$k^{th}$ co-ordinate $x_{k}$. Also, let $w_{\theta}^{2}=Var\left(X_{k}\left(0\right)\right)$
when $\theta\in\Theta_{k}$ and $X_{k}$ is the $k^{th}$ co-ordinate
of $X$. The constraint is $\sum_{k=1}^{K}\left|b_{k}\right|\sqrt{Var\left(X_{k}\left(0\right)\right)}\leq B$.
\end{example}

In other circumstances, the weight can serve the purpose of shrinkage
within each function $f_{k}$, which is important in infinite dimensional
spaces. 

\begin{example}To avoid distracting notation, suppose that $g\left(x\right)=\sum_{j=1}^{\infty}b_{j}x_{k}^{j}$,
a polynomial which depends on $x_{k}\in\left[0,1\right]$ only. Also
suppose that $\sum_{j=1}^{\infty}\left(j!\right)\left|b_{j}\right|\leq B$,
so that the weights force the coefficients to decay faster than $j!$.
An infinite differentiable function with derivatives of all orders
bounded by one can be written as the polynomial above where $\left|b_{j}\right|\leq\left(j!\right)^{-1}$.
Hence, the weights allow to account for this and the constraint induces
an additional shrinkage effect on the coefficients because of the
summability constraint. \end{example}

From now on, dependence on $\Theta$ and $\mathcal{W}$ will be implicit
when writing $\mathcal{L}\left(B\right)$. The approximation error
of functions in $\mathcal{L}\left(B\right)$ for some $B<\infty$
can be related to the bound on the absolute sum of the coefficients.
This is useful if one supposes that $g_{0}\in\mathcal{L}\left(B_{0}\right)$
for some unknown but finite $B_{0}$. If the user estimates the model
with $B<B_{0}$ an approximation error will be incurred. However,
note that the results of the paper will allow for more general forms
of misspecification. Let $P\left(x\right)$ be the marginal distribution
of $X\left(t\right)$, which by stationarity does not depend on $t$.
For any function $g:\mathbb{R}^{K}\rightarrow\mathbb{R}$, let $Pg=\int g\left(x\right)dP\left(x\right)$.
The $L_{r}\left(P\right)$ norm is $\left|\cdot\right|_{r}=\left(\int\left|\cdot\right|^{r}dP\right)^{1/r}$,
for $r\in[1,\infty)$, with the standard modification when $r=\infty$.
The following is a re-adaptation of a result in Sancetta (2015) and
can be used to control the approximation error of the estimator.

\begin{lemma}\label{Lemma_L_B_B0Approximation}Let $g_{0}\in\mathcal{L}\left(B_{0}\right)$
for some $B_{0}<\infty$ and $\bar{\theta}_{r}:=\sup_{\theta\in\Theta}\left|\theta\right|_{r}<\infty$.
Then, for any $B<\infty$, and $r\in\left[1,\infty\right]$, $\min_{g\in\mathcal{L}\left(B\right)}\left|g_{0}-g\right|_{r}\leq\underline{w}^{-1}\bar{\theta}_{r}\max\left\{ B_{0}-B,0\right\} $.\end{lemma}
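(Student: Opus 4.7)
The plan is to split on whether $B\ge B_0$ or $B<B_0$. In the first case $g_0$ itself lies in $\mathcal{L}(B)$ (since $\sum_\theta w_\theta |b_\theta^0|\le B_0\le B$), so the minimum is $0$ and simultaneously $\max\{B_0-B,0\}=0$; the bound holds trivially. All the work is therefore in the case $B<B_0$.

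For $B<B_0$, the candidate I would use is the rescaled function $g:=(B/B_0)\,g_0$. If $g_0$ is represented as $g_0=\sum_{\theta\in\Theta}b_\theta^0\,\theta$ with $\sum_\theta w_\theta|b_\theta^0|\le B_0$, then $g=\sum_\theta (B/B_0)b_\theta^0\,\theta$ has weighted $\ell_1$-norm of coefficients equal to $(B/B_0)\sum_\theta w_\theta|b_\theta^0|\le B$, so $g\in\mathcal{L}(B)$. The approximation error is then
\[
|g_0-g|_r=\bigl|1-B/B_0\bigr|\cdot|g_0|_r=\frac{B_0-B}{B_0}\,|g_0|_r.
\]

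It remains to bound $|g_0|_r$. By the generalized triangle inequality in $L_r(P)$ applied to the (possibly uncountable) sum defining $g_0$, together with $|\theta|_r\le\bar\theta_r$ and $w_\theta\ge\underline{w}$ for every $\theta\in\Theta$,
\[
|g_0|_r\le\sum_{\theta\in\Theta}|b_\theta^0|\,|\theta|_r\le\bar\theta_r\sum_{\theta\in\Theta}|b_\theta^0|\le\underline{w}^{-1}\bar\theta_r\sum_{\theta\in\Theta}w_\theta|b_\theta^0|\le\underline{w}^{-1}\bar\theta_r\,B_0.
\]
Substituting this into the previous display yields $|g_0-g|_r\le \underline{w}^{-1}\bar\theta_r(B_0-B)$, which is the claimed bound.

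The only delicate point I anticipate is justifying the triangle inequality for the uncountable sum $\sum_{\theta\in\Theta}b_\theta^0\,\theta$ interpreted as the supremum over finite subsets (as defined in the paper). This is handled by noting that the support $\{\theta:b_\theta^0\ne 0\}$ must be at most countable whenever the weighted sum $\sum_\theta w_\theta|b_\theta^0|$ is finite and the weights are bounded below by $\underline{w}>0$; once reduced to a countable sum, Minkowski's inequality applies term-by-term and the bound transfers directly. No further obstacle is expected since the argument is otherwise a one-line rescaling.
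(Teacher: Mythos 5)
Your proposal is correct and takes essentially the same approach as the paper: both handle $B\ge B_0$ trivially and, for $B<B_0$, rescale the candidate to $g=(B/B_0)g_0\in\mathcal{L}(B)$, then bound $|g_0-g|_r$ via the triangle inequality applied to the series representation of $g_0$, using $|\theta|_r\le\bar\theta_r$ and $w_\theta\ge\underline{w}$. The only cosmetic difference is that the paper carries the computation through a normalized ``convex-combination'' rewriting of $g_0$, whereas you bound $|g_0|_r$ directly; your remark about countability of the support (hence applicability of Minkowski) is a correct and worthwhile clarification that the paper leaves implicit.
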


When $g_{0}\notin\mathcal{L}\left(B\right)$, define the best uniform
approximation $g_{B}=\arg\inf\left|g-g_{0}\right|_{\infty}$, where
the infimum is over $\mathcal{L}\left(B\right)$. We shall define
\begin{equation}
B_{0}=\arg\inf_{B<\infty}\left|g_{B}-g_{0}\right|_{\infty}.\label{EQ_B0Definition}
\end{equation}
This means that $g_{B_{0}}$ is the best uniform approximation of
$g_{0}$ for any $g\in\bigcup_{B>0}\mathcal{L}\left(B\right)$. 

\subsection{The Goal}

The user supposes that $g_{0}\in\mathcal{L}\left(B_{0}\right)$, but
ignores the value of $B_{0}$. They guess a value $\bar{B}<\infty$.
If it is the case that $g_{0}\in\mathcal{L}\left(B_{0}\right),$ and
$\bar{B}\geq B_{0}$, there will be no approximation error. The estimation
error could be high, especially if $\bar{B}$ is much larger than
$B_{0}$. Once $\bar{B}$ is chosen, the log-likelihood in (\ref{EQ_negativeProfileLikelihood})
is maximized over $\mathcal{L}\left(\bar{B}\right)$. 

Let $\lambda=d\Lambda/d\mu$, where $\Lambda$ is the intensity measure
(\ref{EQ_intensityRepresentation}) and $\mu$ is the Lebesgue measure.
Then, $\lambda\left(X\left(t\right)\right)=\exp\left\{ g_{0}\left(X\left(t\right)\right)\right\} $
is as in (\ref{EQ_intensityRepresentation}). Suppose that $g$ is
fixed and bounded. Define the random norm 
\[
\left|g-g_{0}\right|_{\lambda,T}:=\sqrt{\frac{1}{T}\int_{0}^{T}\left(g\left(X\left(t\right)\right)-g_{0}\left(X\left(t\right)\right)\right)^{2}d\Lambda\left(t\right)}.
\]
By stationarity and ergodicity (e.g., Lemma 2 in Ogata, 1978), 
\begin{equation}
\left|g-g_{0}\right|_{\lambda,T}^{2}\rightarrow P\left(g-g_{0}\right)^{2}\lambda=\int\left(g\left(x\right)-g_{0}\left(x\right)\right)^{2}\lambda\left(x\right)dP\left(x\right),\label{EQ_empiricalPopulationNormConvergence}
\end{equation}
almost surely. The goal is to define an estimator $g_{T}$ in $\mathcal{L}\left(\bar{B}\right)$
and obtain rates of convergence to zero of $\left|g_{T}-g_{0}\right|_{\lambda,T}$.
By (\ref{EQ_empiricalPopulationNormConvergence}), this convergence
implies also convergence of $P\left(g_{T}-g_{0}\right)^{2}\lambda$,
though the rate of convergence for the latter cannot be derived unless
we impose dependence conditions on the covariates. If $\left|g_{0}\right|$
is bounded - as will be assumed here - the right hand side (r.h.s.)
of (\ref{EQ_empiricalPopulationNormConvergence}) is proportional
to $P\left(g-g_{0}\right)^{2}=\left|g-g_{0}\right|_{2}^{2}$, hence
the results to be derived also hold in $P$-integrated square error.
The proofs show that the convergence results hold for the Hellinger
distance between $\exp\left\{ g_{T}\right\} $ and $\exp\left\{ g_{0}\right\} $.
To minimise the notational burden, this is not explicitly stated in
the text. Details can be found in Section \ref{Section_ProofsAppendix}
of the supplementary material. Note that elements $g,g'\in\mathcal{L}\left(\bar{B}\right)$
will be considered the same if $P\left(g-g'\right)^{2}\lambda=0$. 

\subsubsection{Connection to Lasso }

Given the constraint on the coefficients $b_{\theta}$'s, minimization
over $\mathcal{L}\left(\bar{B}\right)$ is just the primal of an $l_{1}$
penalized likelihood estimator, i.e., Lasso. More specifically, conditioning
on the sample, for each $\bar{B}$, there is a constant $\pi_{\bar{B}}$
(the Lagrange multiplier, which increases with $T$, but at a possibly
different rate than $L_{T}$), such that the left hand side of the
following two displays are the same: 

\[
\arg\sup_{\theta,b_{\theta}}L{}_{T}\left(\sum_{\theta\in\Theta}b_{\theta}\theta\right),
\]
where the supremum is taken over those $\theta$'s and $b_{\theta}$'s
such that $\sum_{\theta\in\Theta}b_{\theta}\theta\in\mathcal{L}\left(\bar{B}\right)$;
\begin{equation}
\arg\sup_{\theta,b_{\theta}}L{}_{T}\left(\sum_{\theta\in\Theta}b_{\theta}\theta\right)-\pi_{\bar{B}}\sum_{\theta\in\Theta}w_{\theta}\left|b_{\theta}\right|,\label{EQ_lassoPenalised}
\end{equation}
where the supremum is taken over those $\theta$'s and $b_{\theta}$'s
such that $\theta\in\Theta$ and $b_{\theta}$ is a real number. If
$\mathcal{L}\left(\bar{B}\right)$ is a finite dimensional space,
$\pi_{\bar{B}}/T\rightarrow0$ (in probability) when the estimator
is consistent for $g_{0}$ inside $\mathcal{L}\left(\bar{B}\right)$.
However, when $\mathcal{L}\left(\bar{B}\right)$ is infinite dimensional,
norms are not equivalent and consistency under the norm we consider
in this paper does not mean consistency under the norm implied by
the constraint. Hence, for infinite dimensional $\mathcal{L}\left(\bar{B}\right)$,
$\pi_{\bar{B}}/T$ may converge to a constant even when the estimator
is consistent and $g_{0}$ lies inside $\mathcal{L}\left(\bar{B}\right)$. 

Estimation of the primal or dual problem gives the same solution when
we are able to map the constraint into the Lagrange multiplier $\pi_{\bar{B}}$.
In general, this is not straightforward. Solution for the Lasso problem
is often via co-ordinate descent, though rates of convergence are
usually not derived (e.g., Bühlmann and van de Geer, and references
therein). Here, we solve the constrained optimization and suggest
an algorithm to do so in practice and derive the convergence rates
of the algorithm (Section \ref{Section_EstimationDetails}). 

\section{Consistency of the Estimator\label{Section_MainResults}}

\subsection{Conditions}

The following conditions are imposed. Remarks on these are in Section
\ref{Section_remarksConditions}. To aid intuition, the conditions
can be divided into three groups: stochastic restrictions, parameter
space restrictions, and estimator restrictions. The conditions use
the notation defined around (\ref{EQ_intensityRepresentation}) and
in Section \ref{Section_functionRepresentation}.

\begin{condition}\label{Condition_stochasticRestrictions}Stochastic
Restrictions.
\begin{enumerate}
\item $\left(X\left(t\right)\right)_{t\geq0}$ is a stationary, ergodic,
predictable $K$ dimensional process with values in some set $\mathcal{X}\subseteq\mathbb{R}^{K}$
($K>1$);
\item The cumulative intensity $\Lambda$ has a density $\lambda$ with
respect to the Lebesgue measure (as in (\ref{EQ_intensityRepresentation})); 
\item $T_{0}=0$ is the time of the last jump before the jump at time $T_{1}$.
\end{enumerate}
\end{condition}

\begin{condition}\label{Condition_parameterSpaceRestrictions}Parameter
Space Restrictions.
\begin{enumerate}
\item The functions in $\Theta=\bigcup_{k=1}^{K}\Theta_{k}$ are measurable,
and uniformly bounded by a finite constant $\bar{\theta}:=\sup_{\theta\in\Theta}\sup_{x\in\mathcal{X}}\left|\theta\left(x\right)\right|$.
The set $\Theta_{k}$ has an $L_{\infty}\left(P\right)$ $\epsilon$-bracketing
number $N\left(\epsilon,\Theta_{k}\right)$ such that the entropy
integral $\int_{0}^{1}\sqrt{\ln\left(1+N\left(\epsilon,\Theta_{k}\right)\right)}d\epsilon$
is finite for every $k$ (not bounded and can grow with the sample
size); the weights in $\mathcal{L}\left(B,\Theta,\mathcal{W}\right)$
satisfy $\underline{w}:=\inf_{\theta\in\Theta}w_{\theta}>0$; 
\item In (\ref{EQ_intensityRepresentation}), $\bar{g}_{0}:=\left|g_{0}\right|_{\infty}<\infty$
and if $g_{0}\neq g_{B_{0}}$, then $B_{0}<\infty$ (see (\ref{EQ_B0Definition})).
\end{enumerate}
\end{condition}

\begin{condition}\label{Condition_estimatorRestrictions}Estimator
Restrictions.

The estimator $g_{T}$ satisfies:
\begin{enumerate}
\item $\Pr\left(g_{T}\notin\mathcal{L}\left(\bar{B},\Theta,\mathcal{W}\right)\right)=o\left(1\right)$; 
\item $L_{T}\left(g_{T}\right)\geq\sup_{g\in\mathcal{L}\left(\bar{B},\Theta,\mathcal{W}\right)}L{}_{T}\left(g\right)-O_{p}\left(\frac{T}{r_{T}^{2}}\right)$,
where $r_{T}$ is as in (\ref{EQ_r_T}) in Section \ref{Section_Consistency}. 
\end{enumerate}
\end{condition}

In general, from (\ref{EQ_r_T}) one can deduce that $r_{T}^{2}\lesssim T^{1/2}$,
where, throughout, $\lesssim$ is inequality up to a multiplicative
universal finite absolute constant.

\subsection{Consistency Results\label{Section_Consistency}}

It will be shown that the overall complexity in the statistical estimation
depends on three factors. The logarithm of the number of variables
$K$, $\bar{B}$ (in $\mathcal{L}\left(\bar{B}\right)$) and the entropy
integral of the largest of the sets $\Theta_{k}$ . To ease notation,
dependence on $\bar{\theta}$ and $\underline{w}$ is suppressed in
what follows. More explicit bounds can be found in the proof of the
results. 

\begin{theorem}\label{Theorem_consistency}Suppose that there is
a nondecreasing sequence $r_{T}$ such that
\begin{equation}
r_{T}^{2}\lesssim\min\left\{ \frac{\bar{B}^{-1}T^{1/2}}{\sqrt{\ln K}+\max_{k\leq K}\int_{0}^{1}\sqrt{\ln\left(1+N\left(\epsilon,\Theta_{k}\right)\right)}d\epsilon},\frac{1}{\inf_{g\in\mathcal{L}\left(\bar{B}\right)}\left|g-g_{0}\right|_{\infty}^{2}}\right\} .\label{EQ_r_T}
\end{equation}
Under Conditions \ref{Condition_stochasticRestrictions}, \ref{Condition_parameterSpaceRestrictions},
and \ref{Condition_estimatorRestrictions}, $\left|g_{T}-g_{0}\right|_{\lambda,T}=O_{p}\left(r_{T}^{-1}\right)$.
\end{theorem}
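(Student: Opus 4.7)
My plan is to follow the standard M-estimator route for counting processes (cf.\ Andersen and Gill, 1982; van de Geer, 1995), combined with a dictionary-reduction argument tailored to the $\ell_1$ constraint defining $\mathcal{L}(\bar B)$. Let $M(t):=N(t)-\int_0^t\lambda(X(s))\,ds$, which is a zero-mean martingale under $\mathcal{F}_t$ by Br\'emaud. Setting $h:=g-g_0$ and using $\lambda=e^{g_0}$ with $dN=\lambda\,dt+dM$ yields
\[
L_T(g_0)-L_T(g)=-\int_0^T h\,dM+\int_0^T e^{g_0}\bigl(e^h-1-h\bigr)dt.
\]
On $\mathcal{L}(\bar B)$ one has $|g|_\infty\le\bar B\bar\theta/\underline w$ by Condition \ref{Condition_parameterSpaceRestrictions} and $|g_0|_\infty\le\bar g_0$, so $|h|$ stays in a compact interval on which $e^h-1-h\ge c_0 h^2$; together with $\int_0^T e^{g_0}h^2\,dt=T|h|_{\lambda,T}^2$ this gives the quadratic lower bound $L_T(g_0)-L_T(g)\ge -\int_0^T h\,dM+c\,T\,|g-g_0|_{\lambda,T}^2$ for some $c>0$. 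Letting $g_{\bar B}\in\mathcal{L}(\bar B)$ be the best uniform approximant of $g_0$ and applying Condition \ref{Condition_estimatorRestrictions}, on an event of probability tending to one
\[
c\,T\,|g_T-g_0|_{\lambda,T}^2\le \int_0^T (g_T-g_0)\,dM+\bigl[L_T(g_0)-L_T(g_{\bar B})\bigr]+O_p(T/r_T^2).
\]
The bracket is a deterministic quadratic piece of order $T|g_{\bar B}-g_0|_\infty^2=O(T/r_T^2)$ (second term of the minimum in (\ref{EQ_r_T})) plus the martingale $\int_0^T(g_0-g_{\bar B})\,dM$, which by Bernstein for a single bounded predictable integrand is $O_p(\sqrt T/r_T)$, hence $O_p(T/r_T^2)$ because $r_T\le\sqrt T$.

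The remaining step is to control the random term $\int_0^T (g_T-g_0)\,dM$ uniformly. Convexity of the $\ell_1$ ball gives $g_T-g_{\bar B}\in\mathcal{L}(2\bar B)$, and any $h=\sum_\theta b_\theta\theta$ with $\sum_\theta w_\theta|b_\theta|\le 2\bar B$ satisfies the dictionary bound
\[
\Bigl|\int_0^T h\,dM\Bigr|\le 2\bar B\sup_{\theta\in\Theta}\frac{|\int_0^T\theta\,dM|}{w_\theta}.
\]
I would control the right-hand side by combining a Bernstein-type exponential inequality for stochastic integrals against $M$ (predictable quadratic variation $T|\theta|_{\lambda,T}^2\le T\bar\theta^2 e^{\bar g_0}$, jumps bounded by $\bar\theta$) with chaining over the $L_\infty(P)$-bracketing numbers $N(\epsilon,\Theta_k)$ inside each $\Theta_k$ and a union bound over the $K$ classes, giving
\[
\sup_{\theta\in\Theta}\frac{|\int_0^T\theta\,dM|}{w_\theta}=O_p\Bigl(\sqrt T\bigl(\sqrt{\log K}+\max_{k\le K}\int_0^1\sqrt{\log(1+N(\epsilon,\Theta_k))}\,d\epsilon\bigr)\Bigr).
\]
Multiplying by $2\bar B$ yields $O_p(T/r_T^2)$ by the first entry of the minimum in (\ref{EQ_r_T}). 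Substituting into the basic inequality and dividing by $cT$ gives $|g_T-g_0|_{\lambda,T}^2=O_p(r_T^{-2})$, as claimed.

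The main obstacle is precisely the last display: producing an exponential inequality for continuous-time martingale stochastic integrals with a Gaussian-like variance proxy, and then running generic chaining under \emph{bracketing} (rather than covering) entropy across a class that may be infinite-dimensional within each $\Theta_k$. The $\ell_1$-dictionary reduction is the device that converts the complexity of the convex hull $\mathcal{L}(\bar B)$ into that of $\Theta$ alone, at the price of the multiplicative factor $\bar B$, which is exactly what appears in the first term of (\ref{EQ_r_T}); the quadratic expansion of the log-likelihood, the approximation-error bookkeeping through $g_{\bar B}$, and the single-function Bernstein step are essentially mechanical by comparison.
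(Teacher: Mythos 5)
Your architecture mirrors the paper's quite closely: a basic inequality from concavity of the log-likelihood, approximation bookkeeping through $g_{\bar B}$, the $\ell_1$-dictionary reduction $\sup_{g\in\mathcal{L}(\bar B)}\bigl|\int_0^T g\,dM\bigr|\le\bar B_w\sup_{\theta\in\Theta}\bigl|\int_0^T\theta\,dM\bigr|$, and a bracketing-entropy maximal inequality with a $\sqrt{\ln K}$ union-bound penalty. Two points keep it from being a proof of the stated theorem.

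\textbf{The maximal inequality.} As you yourself flag, the bound $\mathbb{E}\sup_{\theta\in\Theta}\bigl|\int_0^T\theta\,dM\bigr|\lesssim\sqrt{T}\bigl(\sqrt{\ln K}+\max_{k\le K}\int_0^1\sqrt{\ln(1+N(\epsilon,\Theta_k))}\,d\epsilon\bigr)$ is the load-bearing step, and you describe a plan (Bernstein tail for a purely-jump martingale with variance proxy $T\bar\theta^2 e^{\bar g_0}$, then generic chaining under bracketing, then a union bound over $k$) without carrying it out. The paper does not chain a tail bound: it invokes Nishiyama's (1998, Theorem 2.2.3) maximal inequality for stochastic integrals against compensated counting processes under bracketing entropy with respect to the random $L^2(d\Lambda)$ seminorm (the supplement's Lemma \ref{Lemma_nishiyamaMaximalIneq}), obtaining the displayed first-moment bound directly, and then concludes via Markov's inequality. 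Your list of ingredients is the right one, but until the lemma is cited or a chaining argument is actually written out, this part of the proof is a roadmap rather than an argument.

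\textbf{The curvature constant.} Your quadratic lower bound $e^h-1-h\ge c_0 h^2$ over a compact interval holds on $[-a,a]$ only with $c_0=(e^{-a}-1+a)/a^2$ (the ratio is monotone, so the minimum sits at $h=-a$), and here $a\asymp\bar B$ since $|g|_\infty\le\bar B\bar\theta/\underline w$ on $\mathcal{L}(\bar B)$. Dividing the basic inequality by $c_0 T$ therefore delivers $|g_T-g_0|_{\lambda,T}^2=O_p(\bar B\,r_T^{-2})$, not the claimed $O_p(r_T^{-2})$. This is not innocuous: several downstream corollaries, including the one immediately after Theorem \ref{Theorem_consistency}, take $\bar B\to\infty$. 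The paper avoids the range-dependent constant at this stage by phrasing the basic inequality in the random Hellinger metric $d_T(g,g_0)=\bigl(\tfrac12\int_0^T(e^{g/2}-e^{g_0/2})^2\,d\mu\bigr)^{1/2}$ via van de Geer's (1995) Lemma 4.1, namely $\tfrac12\int_0^T(g-g_0)\,dM\ge d_T^2(g,g_0)+\tfrac12\bigl[L_T(g)-L_T(g_0)\bigr]$, which holds with no $\bar B$-dependence; the rate is derived natively for $d_T$ (as the paper notes in Section \ref{Section_modelg_0}) and only then translated to $|\cdot|_{\lambda,T}$ via Lemma \ref{Lemma_d_T_bounds}. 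That translation, not the curvature inequality, is where a possible range dependence is isolated in the paper's route. Aside from these two issues, your decomposition of the bracket $L_T(g_0)-L_T(g_{\bar B})$ into a deterministic quadratic piece and a single-integrand martingale, and the absorption of the latter at rate $O_p(\sqrt T/r_T)=O_p(T/r_T^2)$, is correct and matches the paper (which simply merges the two martingale terms into $\int_0^T(g-g_{\bar B})\,dM$ before taking the supremum).
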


Note that the condition that $r_{T}$ is nondecreasing implicitly
imposes restrictions on $\bar{B}$, $K$ and $N\left(\epsilon,\Theta_{k}\right)$.
The daunting expression (\ref{EQ_r_T}) does simplify, but it is stated
in this form for flexibility. Section \ref{Section_examplesApplications}
considers applications of this result to a variety of problems so
that the bound becomes considerably simple. To provide a sense for
the sharpness of the bound, it might be convenient to suppose that
the approximation error $\inf_{g\in\mathcal{L}\left(\bar{B}\right)}\left|g-g_{0}\right|_{\infty}$
is zero. Also suppose that the entropy integral is bounded by a finite
constant. In this case, the rate of convergence of $\left|g_{T}-g_{0}\right|_{\lambda,T}$
is $O\left(\left(\ln\left(K\right)/T\right)^{1/4}\right)$. By stationarity
and ergodicity, it is easy to see that for $T=T_{n}$ ($T_{n}$ is
the time of the $n^{th}$ jump), $T_{n}\asymp n$, where $\asymp$
means equality up to a multiplicative finite absolute constant. In
consequence, the bound becomes the more familiar $O\left(\left(\ln\left(K\right)/n\right)^{1/4}\right)$
for $K>1$. Results in Tsybakov (2003) show that in a regression context
with Gaussian errors, no linear estimator of the convex combination
of $K$ bounded terms can achieve a rate faster than $O_{p}\left(\left(\ln\left(K\right)/n\right)^{1/4}\right)$
when $K$ is of larger order of magnitude than $n^{1/2}$ (see Theorem
2 in Tsybakov, 2003). Hence, without further assumptions, one can
suppose that the convergence rate derived here is optimal in this
context. Theorem \ref{Theorem_optimality} in Section \ref{Section_optimality}
lends some rigor to this supposition. 

Now to show the effect of the approximation error when $g_{0}\in\mathcal{L}\left(B_{0}\right)$
for some unknown, but finite $B_{0}$, consider the following scenario.
Let $\bar{B}\rightarrow\infty$ so that eventually $\bar{B}\geq B_{0}$.
By Lemma \ref{Lemma_L_B_B0Approximation} deduce that the approximation
error is eventually exactly zero for some finite $\bar{B}$. In consequence
the following holds.

\begin{corollary}Suppose that $g_{0}\in\mathcal{L}\left(B_{0}\right)$.
Under the conditions of Theorem \ref{Theorem_consistency}, for any
$\bar{B}\rightarrow\infty$, 
\[
\left|g_{T}-g_{0}\right|_{\lambda,T}^{2}=O_{p}\left(\frac{\bar{B}\left[\sqrt{\ln K}+\max_{k\leq K}\int_{0}^{1}\sqrt{\ln\left(1+N\left(\epsilon,\Theta_{k}\right)\right)}d\epsilon\right]}{T^{1/2}}\right)
\]
 \end{corollary}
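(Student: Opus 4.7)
The plan is to combine Theorem \ref{Theorem_consistency} directly with Lemma \ref{Lemma_L_B_B0Approximation}: the hypothesis $g_0 \in \mathcal{L}(B_0)$ together with $\bar{B} \to \infty$ forces the approximation-error term in the $\min$ defining $r_T$ to vanish for all sufficiently large $T$, so that only the first term in (\ref{EQ_r_T}) governs the rate.

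Concretely, first I would invoke Lemma \ref{Lemma_L_B_B0Approximation} with $r=\infty$. Since $\bar{\theta}_\infty \leq \bar{\theta} < \infty$ by Condition \ref{Condition_parameterSpaceRestrictions}, the lemma gives
\[
\inf_{g\in\mathcal{L}(\bar{B})}|g-g_0|_\infty \;\leq\; \underline{w}^{-1}\bar{\theta}_\infty\max\{B_0-\bar{B},0\}.
\]
Because $\bar{B}\to\infty$, for all $T$ large enough we have $\bar{B}\geq B_0$, whence the right-hand side is exactly zero; equivalently, $g_0\in\mathcal{L}(\bar{B})$ eventually, so $g_{\bar{B}}=g_0$ in the sense of (\ref{EQ_B0Definition}) and the second argument of the $\min$ in (\ref{EQ_r_T}) is interpreted as $+\infty$.

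Next I would take $r_T$ to be any nondecreasing sequence satisfying
\[
r_T^2 \;\lesssim\; \frac{\bar{B}^{-1}T^{1/2}}{\sqrt{\ln K}+\max_{k\leq K}\int_0^1\sqrt{\ln(1+N(\epsilon,\Theta_k))}\,d\epsilon},
\]
which is permissible provided $\bar{B}$ is chosen to grow slowly enough in $T$ (together with $K$ and the entropy integrals) that the right-hand side is itself nondecreasing; this is the implicit restriction already flagged after Theorem \ref{Theorem_consistency}. Applying Theorem \ref{Theorem_consistency} then yields $|g_T-g_0|_{\lambda,T}=O_p(r_T^{-1})$, and squaring and inverting the displayed bound gives exactly the stated rate.

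There is no substantive obstacle here; the only thing to be careful about is the monotonicity of $r_T$, which is why the corollary must be read as holding for any $\bar{B}\to\infty$ that is compatible with $r_T$ being nondecreasing (e.g.\ $\bar{B}=o(T^{1/2}/[\sqrt{\ln K}+\max_k\int_0^1\sqrt{\ln(1+N(\epsilon,\Theta_k))}\,d\epsilon])$). Once this is granted, the argument is a one-line specialization of Theorem \ref{Theorem_consistency} using the vanishing of the approximation error supplied by Lemma \ref{Lemma_L_B_B0Approximation}.
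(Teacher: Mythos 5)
Your proposal is correct and takes the same route as the paper: the paper's argument (stated just before the corollary) is precisely that $\bar{B}\to\infty$ forces $\bar{B}\geq B_0$ eventually, so by Lemma \ref{Lemma_L_B_B0Approximation} the approximation term in (\ref{EQ_r_T}) vanishes, and Theorem \ref{Theorem_consistency} then gives the stated rate with $r_T^2$ governed by the first argument of the min. Your extra care over the implicit monotonicity/growth restriction on $\bar{B}$ is consistent with the remark the paper makes immediately after Theorem \ref{Theorem_consistency}.
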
 

More generally, when $g_{0}\notin\mathcal{L}\left(B\right)$ for any
$B$, the approximation error in Theorem \ref{Theorem_consistency}
can be bounded using the following, which follows from the triangle
inequality and Lemma \ref{Lemma_L_B_B0Approximation}. 

\begin{lemma}\label{Lemma_LInfinityApproximation}Under Condition
\ref{Condition_parameterSpaceRestrictions}, 
\[
\inf_{g\in\mathcal{L}\left(\bar{B}\right)}\left|g_{0}-g\right|_{\infty}\lesssim\inf_{B>0}\left\{ \max\left\{ B-\bar{B},0\right\} +\inf_{g\in\mathcal{L}\left(B\right)}\left|g_{0}-g\right|_{\infty}\right\} .
\]
\end{lemma}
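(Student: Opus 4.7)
The plan is to bound $\inf_{g\in\mathcal{L}(\bar B)}|g_0-g|_\infty$ by passing through an intermediate approximation inside $\mathcal{L}(B)$ for an arbitrary $B>0$, and then optimising over $B$ at the very end. The role of $B$ on the right-hand side of the claim is to trade the approximation error inside $\mathcal{L}(B)$ against the cost of moving from $\mathcal{L}(B)$ back into $\mathcal{L}(\bar B)$, and Lemma \ref{Lemma_L_B_B0Approximation} is precisely the tool that quantifies the latter cost.

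Concretely, I would fix $B>0$ and $\epsilon>0$ and pick $g_{B,\epsilon}\in\mathcal{L}(B)$ with $|g_0-g_{B,\epsilon}|_\infty\le\inf_{g\in\mathcal{L}(B)}|g_0-g|_\infty+\epsilon$. Because $g_{B,\epsilon}$ itself lies in $\mathcal{L}(B)$, Lemma \ref{Lemma_L_B_B0Approximation} applied with $g_{B,\epsilon}$ playing the role of $g_0$, with $B$ playing the role of $B_0$, target radius $\bar B$, and $r=\infty$, supplies some $h\in\mathcal{L}(\bar B)$ with $|g_{B,\epsilon}-h|_\infty\le\underline{w}^{-1}\bar{\theta}_\infty\max\{B-\bar B,0\}$. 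A single application of the triangle inequality then yields
\[
\inf_{g\in\mathcal{L}(\bar B)}|g_0-g|_\infty \le |g_0-h|_\infty \le |g_0-g_{B,\epsilon}|_\infty+|g_{B,\epsilon}-h|_\infty \lesssim \inf_{g\in\mathcal{L}(B)}|g_0-g|_\infty+\max\{B-\bar B,0\}+\epsilon,
\]
and I would conclude by letting $\epsilon\downarrow 0$ and taking the infimum over $B>0$; the multiplicative constant absorbed into $\lesssim$ is of order $\max\{1,\underline{w}^{-1}\bar{\theta}_\infty\}$, which is finite by Condition \ref{Condition_parameterSpaceRestrictions}.

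The argument is essentially a one-line triangle inequality, so I do not anticipate any genuine obstacle. The only point requiring attention is that Lemma \ref{Lemma_L_B_B0Approximation} is stated for a specific function lying in some $\mathcal{L}(B_0)$, so it must be applied to the stepping-stone $g_{B,\epsilon}$ rather than to $g_0$ itself, since $g_0$ need not lie in any $\mathcal{L}(B)$. Observe finally that when $B\le\bar B$ the bracket $\max\{B-\bar B,0\}$ vanishes, so the infimum on the right-hand side captures exactly the intended trade-off between shrinking the approximation error by enlarging the candidate class and paying a penalty $B-\bar B$ for any extra ``room'' required beyond the user's guess $\bar B$.
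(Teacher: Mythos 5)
Your proof is correct and is exactly the route the paper intends: the paper merely asserts that the lemma "follows from the triangle inequality and Lemma \ref{Lemma_L_B_B0Approximation}", and your argument fills in precisely that one-line derivation, correctly applying Lemma \ref{Lemma_L_B_B0Approximation} to a near-minimiser $g_{B,\epsilon}\in\mathcal{L}(B)$ (rather than to $g_0$, which may not lie in any $\mathcal{L}(B)$) and then using the triangle inequality before optimising over $B>0$. The identification of the implied constant as $\max\{1,\underline{w}^{-1}\bar{\theta}\}$, finite under Condition \ref{Condition_parameterSpaceRestrictions}, is also accurate.
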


The reader interested in the scope of the possible applications can
go directly to Section \ref{Section_examplesApplications}. The next
Sections provide remarks on optimality, the conditions, and details
on the solution of the estimation problem. 

\subsection{Optimality\label{Section_optimality}}

From the previous remarks, it is reasonable to infer that the rates
of convergence in Theorem \ref{Theorem_consistency} are optimal for
$K$ large. To avoid technicalities, consider the following simplified
scenario. One may argue that less stringent conditions should make
the estimation problem harder and in consequence, if the lower bound
holds under restrictive conditions, it should hold under more general
ones. Recall that $X_{k}\left(t\right)$ is the $k^{th}$ element
in the vector of covariates $X\left(t\right)$. 

\begin{theorem}\label{Theorem_optimality}Suppose $\mathcal{L}\left(1\right):=\mathcal{L}\left(1,\Theta,\mathcal{W}\right)$,
where the sets $\Theta_{k}$'s contain bounded functions, and the
weights in $\mathcal{W}$ have been set to one. Suppose that $X\left(t+T_{i-1}\right)=X\left(T_{i-1}\right)$
for $t\in(0,T_{i}]$, i.e., $X\left(t\right)$ is constant between
jumps of the point process $N$, and $T_{0}=0$. Also suppose that
$\left(X\left(T_{i}\right)\right)_{i\geq0}$ forms a sequence of i.i.d.
random variables and that the $X_{k}\left(T_{i}\right)$'s are independent
across $k$, with continuous distribution function. For $K>T_{n}^{1/2}$
with $K=O\left(T_{n}^{p}\right)$ for any $p<\infty$, and $n\rightarrow\infty$,
\[
\inf_{g_{T}}\sup_{g_{0}\in\mathcal{L}\left(1\right)}\int_{0}^{T_{n}}\left|g_{T_{n}}\left(X\left(t\right)\right)-g_{0}\left(X\left(t\right)\right)\right|^{2}\exp\left\{ g_{0}\left(X\left(t\right)\right)\right\} dt\gtrsim\sqrt{T_{n}\ln\left(1+KT_{n}^{-1/2}\right)}
\]
in probability, where the infimum is taken over all possible estimators
$g_{T_{n}}$ of the intensity. \end{theorem}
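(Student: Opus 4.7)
The plan is to derive the lower bound by a standard hypothesis-testing reduction (Varshamov--Gilbert plus Fano), in the spirit of the $\ell_1$-aggregation result of Tsybakov (2003), adapted to the exponential duration likelihood. The assumptions that $X(t)=X(T_{i-1})$ on $(T_{i-1},T_i]$ and that $(X(T_{i-1}))_{i\geq 0}$ is i.i.d.\ reduce the observations on $[0,T_n]$ to the i.i.d.\ sample $(X_i,\tau_i)_{i=1}^n$ with $X_i:=X(T_{i-1})$ and $\tau_i:=T_i-T_{i-1}$ conditionally exponential of rate $\exp\{g_0(X_i)\}$. For any two bounded functions $g,g'$, a direct computation from the exponential density gives per-observation Kullback--Leibler divergence $K(Q_g\|Q_{g'})\lesssim |g-g'|_2^2$, with constant depending only on $|g|_\infty\vee|g'|_\infty$; hence the joint KL over the sample is bounded by a constant multiple of $n|g-g'|_2^2$.

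For each $k$ choose a bounded $\theta_k\in\Theta_k$ centred under $P$ (for instance $\theta_k(x)=\mathrm{sign}(x_k-q_k)$ with $q_k$ the median of $X_k(0)$, which exists because the marginals are continuous), normalised so $E\theta_k^2$ is a universal positive constant. Independence of the coordinates of $X(0)$ together with centring gives $E\theta_j\theta_k=0$ for $j\neq k$, i.e.\ $L_2(P)$-orthogonality of the $\theta_k$'s. For an integer $m$ to be chosen, the Varshamov--Gilbert lemma produces $\Omega\subset\{-1,0,+1\}^K$ consisting of vectors with exactly $m$ nonzero entries, with pairwise Hamming distance at least $m/2$ and $\ln|\Omega|\gtrsim m\ln(K/m)$. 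Define
\[
g_\omega(x):=\frac{1}{m}\sum_{k=1}^K\omega_k\theta_k(x),\qquad \omega\in\Omega.
\]
Then $g_\omega\in\mathcal{L}(1)$ because $\sum_k|\omega_k|/m=1$, and by orthogonality $|g_\omega-g_{\omega'}|_2^2\gtrsim 1/m$ for $\omega\neq\omega'$.

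Fano's lemma requires $\max_{\omega,\omega'}K(Q_{g_\omega}^{\otimes n}\|Q_{g_{\omega'}}^{\otimes n})\lesssim n/m$ to be at most a fraction of $\ln|\Omega|\asymp m\ln(K/m)$. Solving $n/m\asymp m\ln(K/m)$ gives $m\asymp\sqrt{n/\ln(K/m)}$; for $K\geq T_n^{1/2}\asymp n^{1/2}$ with $K=O(n^p)$ one has $\ln(K/m)\asymp\ln K$, so $m\asymp\sqrt{n/\ln K}$ and
\[
\inf_{g_T}\sup_{g_0\in\mathcal{L}(1)}|g_T-g_0|_2^2\gtrsim 1/m\asymp\sqrt{\ln K/n}
\]
with probability bounded away from zero, Fano yielding this in-probability statement directly via the reduction to a testing error. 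Since $|g_\omega|_\infty\leq 1$, the intensities are bounded above and below by positive constants on the packing, and $T_n/n$ converges a.s.\ to a positive constant by stationarity and ergodicity; hence $\int_0^{T_n}(g_T-g_0)^2\lambda\,dt\gtrsim n|g_T-g_0|_2^2\gtrsim\sqrt{n\ln K}\asymp\sqrt{T_n\ln(1+KT_n^{-1/2})}$, where the last equivalence uses $\ln(1+KT_n^{-1/2})\asymp\ln K$ when $K\geq T_n^{1/2}$.

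The main obstacle is the clean factorisation of the counting-process likelihood under the sample-dependent stopping $T=T_n$ into an i.i.d.\ exponential product; this requires verifying that the joint density of $(X_i,\tau_i)_{i=1}^n$ is exactly a product of exponential-times-marginal factors, a consequence of the covariates being constant on each inter-jump interval and i.i.d.\ across jumps. A secondary technical point is the transfer from the $L_2(P)$ lower bound to the random sample integral $\int_0^{T_n}(\cdot)^2\lambda\,dt$, handled by the uniform bounds on $\lambda$ available on the packing and the ergodic convergence of $T_n/n$.
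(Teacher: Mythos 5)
Your proposal is the standard Tsybakov-style minimax lower bound (Varshamov--Gilbert packing plus Fano), and it is genuinely different from the paper's route in one crucial respect: your packing $\{g_\omega\}$ is a fixed, population-level construction (orthogonal in $L_2(P)$, based on coordinatewise medians), whereas the paper constructs data-dependent functions $f_k$ that depend on the observed jump times and covariate values $X_k(T_{j-1})$, designed so that $\int_0^{T} f_k f_l\, d\mu = \gamma^2 A_n \delta_{kl}$ exactly — i.e.\ the packing is orthogonal in the \emph{sample} norm by construction. This is not a cosmetic choice: it is what makes the paper's lower bound land directly on the quantity $\int_0^{T_n}(g_{T_n}-g_0)^2 e^{g_0}\,dt$, which is the object the theorem is about.

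This is where your argument has a genuine gap. The Fano/packing reduction you run delivers, for some $\omega$, a lower bound of the form $\Pr_\omega\bigl(|g_T - g_\omega|_2 \geq s\bigr) \geq c$, a statement in the population $L_2(P)$ norm. You then write $\int_0^{T_n}(g_T - g_0)^2\lambda\,dt \gtrsim n\,|g_T-g_0|_2^2$, justified only by boundedness of $\lambda$ and ergodicity of $T_n/n$. That inequality cannot be correct as stated: for a data-dependent estimator $g_T$ the left side is a weighted empirical $L_2$ norm over the observed trajectory, and the infimum over all estimators allows $g_T$ to agree with $g_0$ exactly at the finitely many observed points $X(T_{i-1})$ while differing in population, making the sample integral zero while $|g_T-g_0|_2$ is large. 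Ergodicity controls $\int_0^{T_n}(g_\omega - g_{\omega'})^2\lambda\,dt$ for a \emph{fixed} pair, not for the data-dependent difference $g_T - g_\omega$. To repair the argument you would need to run Fano with the random sample norm itself as the packing pseudometric and then prove that, uniformly over all $|\Omega|^2 \asymp \exp\{2Cm\ln K\}$ pairs $(\omega,\omega')$, the sample norm does not undershoot its mean $n|g_\omega-g_{\omega'}|_2^2 \asymp n/m$ by a constant factor. Because the Bernstein exponent for a single pair is of order $n/m$ and $m\ln K \asymp n/m$ under your choice $m\asymp\sqrt{n/\ln K}$, this union bound is on the borderline and requires explicit constant-tracking that is entirely absent from the proposal. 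The paper sidesteps this delicate step by building sample orthogonality into the test functions, at the cost of a data-dependent (and admittedly more exotic) hypothesis class, plus a separate concentration claim for $A_n$ (the number of durations exceeding $a$) to ensure the sample-norm separation is of order $n\gamma^2/m$.

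A smaller point: the paper conditions on $\mathcal{F}^X$ to compute the Kullback--Leibler divergence and obtains a bound in terms of $\int_0^T|g_1-g_2|^2\,d\mu$, whereas you integrate over $X$ and state $K(Q_g\|Q_{g'})\lesssim|g-g'|_2^2$ per observation. Your computation of the exponential-duration KL is fine (it matches the paper's use of the inequality $e^{-x}-1+x\leq \tfrac{x^2}{2}e^{|x|}$), but again the conditioning is chosen in the paper so that the resulting lower bound lives in the sample integral rather than the population norm, so this choice interacts with the issue above.

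In short: the spirit of the proof is right, but the last sentence asserting $\int_0^{T_n}(g_T-g_0)^2\lambda\,dt\gtrsim n|g_T-g_0|_2^2$ is an unproved (and in fact false as a pointwise statement) step, and filling it requires either switching to a sample-dependent packing as in the paper or a careful uniform concentration argument over an exponentially large packing that you have not supplied.
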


Theorem \ref{Theorem_optimality} says that even under rather restrictive
conditions, as long as the number of variables $K$ is of order of
magnitude greater than $T_{n}^{1/2}$, the convergence rate under
$\left|\cdot\right|_{\lambda}$ cannot be faster than $\sqrt{T_{n}^{-1/2}\ln K}$. 

\subsection{Remarks on Conditions\label{Section_remarksConditions}}

It is worth emphasizing that the conditions do not restrict $g_{0}\in\mathcal{L}\left(B\right)$
for some $B<\infty$.

Condition \ref{Condition_stochasticRestrictions} is mild. For all
practical cases, one usually restricts $X$ to be an adapted process
that is left continuous. This implies predictability (e.g., Brémaud,
1981). In consequence, the time from last jump $R\left(t\right):=\inf\left\{ t-T_{i}:t-T_{i}>0,i=0,1,2...\right\} $
can be used as a covariate, as it is a predictable process. This will
be the case when estimating certain nonlinear Hawkes processes in
Section \ref{Section_examplesApplications}. The fact that $T_{0}=0$
is used to keep notation simple. Similarly, the condition $K>1$ is
used to avoid writing $\ln\left(1+K\right)$ instead of $\ln K$ in
various places. 

In Condition \ref{Condition_parameterSpaceRestrictions}, the entropy
integral restriction on the class of functions is standard. It is
needed as the framework is quite general, hence it requires some control
of the complexity of the functions in $\Theta_{k}$. The entropy integral
is finite, but can grow with the sample size, even though this is
not made explicit in the notation (see Section \ref{Section_MonotoneBernstein},
and the proof of Lemma \ref{Lemma_hawkesApproximation} in the supplementary
material). The $L_{\infty}\left(P\right)$ $\epsilon$-bracketing
number of a set $\Theta_{k}$ is the number of pairs of elements in
some set $\mathcal{V}$ such that for each $\theta\in\Theta_{k}$,
there is a bracket $\left[\theta_{L},\theta_{U}\right]$ satisfying
$\theta_{L}\leq\theta\leq\theta_{U}$, and $\left|\theta_{L}-\theta_{U}\right|_{\infty}\leq\epsilon$.
The uniform norm can be replaced by the random norm $T^{-1}\int_{0}^{T}\left|\theta_{L}-\theta_{U}\right|^{2}d\Lambda$,
which is actually the norm used in the proofs. This is difficult to
control and in the applications considered in this paper, the (stronger)
uniform norm is used instead. To cover the case of sieve estimation
and/or misspecification, $g_{0}$ is not restricted to lie in $\mathcal{L}$,
but needs to be uniformly bounded. 

Condition \ref{Condition_estimatorRestrictions} only requires that,
asymptotically, the estimators satisfy the complexity restrictions
discussed in the present paper. This is weaker than assuming that
the absolute sum of the coefficients is bounded by $\bar{B}$ for
any sample size and that the estimators of the functions $\theta_{k}$'s
are always in $\Theta_{k}$. This setup allows to cover different
approaches for estimation without restricting attention to a specific
one. Moreover, the estimator $g_{T}$ only needs to maximize the sample
likelihood $L_{T}$ asymptotically, rather than exactly. Section \ref{Section_EstimationDetails}
provides details on a computationally feasible estimation method. 

In some circumstances, we do not observe the true covariates, and
can only estimate the intensity using approximate data, which may
not even be stationary. A typical example is in the context of Hawkes
processes (see Section \ref{Section_examplesApplications}) or when
a covariate is for example a moving average of the past values. In
the aforementioned cases, the true covariates are a causal filter
of some quantity, but we can only construct the filter using some
initial condition rather than observations prior to time $T_{0}=0$.
Note that the true covariates still satisfy Condition \ref{Condition_stochasticRestrictions}.
However, we carry out optimization on some surrogate data so that
the last point in Condition \ref{Condition_estimatorRestrictions}
does not directly hold. The following allows us to consider such cases.

\begin{corollary}\label{Lemma_asymptoticXs}Suppose Conditions \ref{Condition_stochasticRestrictions}
and \ref{Condition_parameterSpaceRestrictions} hold and let $r_{T}$
be as in (\ref{Theorem_consistency}). Define $\bar{B}_{w}:=\bar{B}/\underline{w}$.
Let $\tilde{X}\left(t\right)$ be arbitrary covariates, but such that
\begin{equation}
\mathbb{E}\sup_{\theta\in\Theta}\int_{0}^{T}\left|\theta\left(\tilde{X}\left(t\right)\right)-\theta\left(X\left(t\right)\right)\right|dt=O\left(e^{-\bar{B}_{w}\bar{\theta}}\sqrt{T\ln K}\right).\label{eq_stationaryApproximation}
\end{equation}
Suppose that $\tilde{g}_{T}$ satisfies $\Pr\left(\tilde{g}_{T}\notin\mathcal{L}\left(\bar{B},\Theta,\mathcal{W}\right)\right)=o\left(1\right)$,
and
\begin{equation}
\tilde{L}{}_{T}\left(\tilde{g}_{T}\right)\geq\sup_{g\in\mathcal{L}\left(\bar{B},\Theta,\mathcal{W}\right)}\tilde{L}{}_{T}\left(g\right)-O_{p}\left(\frac{T}{r_{T}^{2}}\right)\label{EQ_surrogateLogLikApproxMax}
\end{equation}
where $\tilde{L}{}_{T}$ is the loglikelihood $L_{T}$ when we use
covariates $\tilde{X}\left(t\right)$ instead of $X\left(t\right)$,
as data. Then, $\tilde{g}_{T}$ is also an approximate minimizer of
$L_{T}$, i.e., it satisfies Condition \ref{Condition_estimatorRestrictions}
(with error $O_{p}\left(T/r_{T}^{2}\right)$). Hence, 
\[
\left|\tilde{g}_{T}-g_{0}\right|_{\lambda}^{2}=\int_{0}^{T}\left|\tilde{g}_{T}\left(X\left(t\right)\right)-g_{0}\left(X\left(t\right)\right)\right|^{2}\exp\left\{ g_{0}\left(X\left(t\right)\right)\right\} =O_{p}\left(r_{T}^{-2}\right).
\]
Moreover, 
\[
\int_{0}^{T}\left|\tilde{g}_{T}\left(\tilde{X}\left(t\right)\right)-g_{0}\left(X\left(t\right)\right)\right|^{2}\exp\left\{ g_{0}\left(X\left(t\right)\right)\right\} dt=O_{p}\left(r_{T}^{-2}\right)
\]
with $r_{T}^{2}$ as in (\ref{Theorem_consistency}).\end{corollary}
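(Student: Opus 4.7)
The plan is to show that replacing $X$ by $\tilde{X}$ changes the log-likelihood by $O_p(T/r_T^2)$ uniformly over $\mathcal{L}(\bar{B},\Theta,\mathcal{W})$; once this is established, $\tilde{g}_T$ automatically satisfies Condition \ref{Condition_estimatorRestrictions} relative to $L_T$, Theorem \ref{Theorem_consistency} delivers the first rate, and the second display follows by a triangle-inequality argument.

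For the uniform closeness, I would first exploit the linear structure of $\mathcal{L}(\bar{B},\Theta,\mathcal{W})$. Any $g=\sum_{\theta}b_{\theta}\theta$ with $\sum w_{\theta}|b_{\theta}|\leq\bar{B}$ satisfies $\sum|b_{\theta}|\leq\bar{B}/\underline{w}=\bar{B}_{w}$, whence
\[
\sup_{g\in\mathcal{L}(\bar{B})}|g(x)-g(y)|\leq \bar{B}_{w}\,D(x,y),\qquad D(x,y):=\sup_{\theta\in\Theta}|\theta(x)-\theta(y)|,
\]
and, using $|g|_{\infty}\leq\bar{B}_{w}\bar{\theta}$ together with the mean value theorem for $\exp$,
\[
\sup_{g\in\mathcal{L}(\bar{B})}|e^{g(x)}-e^{g(y)}|\leq \bar{B}_{w}e^{\bar{B}_{w}\bar{\theta}}D(x,y).
\]
Writing $D(t):=D(X(t),\tilde{X}(t))$ and subtracting the two representations (\ref{EQ_negativeProfileLikelihood}) for $L_T$ and $\tilde{L}_T$ gives
\[
\sup_{g\in\mathcal{L}(\bar{B})}|L_T(g)-\tilde{L}_T(g)|\leq \bar{B}_{w}\!\int_0^T\! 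D(t)\,dN(t)+\bar{B}_{w}e^{\bar{B}_{w}\bar{\theta}}\!\int_0^T\! D(t)\,dt.
\]

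For the second term, hypothesis (\ref{eq_stationaryApproximation}) together with the cancellation of the two exponential factors yields an expectation of order $\sqrt{T\ln K}$. For the first term, I would use the fact that $dN(t)-\lambda(X(t))\,dt$ is a martingale and that $\lambda\leq e^{\bar{g}_{0}}$ by Condition \ref{Condition_parameterSpaceRestrictions}, so that $\mathbb{E}\int_0^T D(t)\,dN(t)\leq e^{\bar{g}_{0}}\mathbb{E}\int_0^T D(t)\,dt$, which is again $O(e^{\bar{g}_{0}}e^{-\bar{B}_{w}\bar{\theta}}\sqrt{T\ln K})$. Markov's inequality then gives $\sup_{g\in\mathcal{L}(\bar{B})}|L_T(g)-\tilde{L}_T(g)|=O_p(\sqrt{T\ln K})$, and from (\ref{EQ_r_T}) this is $O_p(T/r_T^{2})$. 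Consequently
\[
L_T(\tilde{g}_T)\geq \tilde{L}_T(\tilde{g}_T)-O_p(T/r_T^{2})\geq \sup_{g\in\mathcal{L}(\bar{B})}\tilde{L}_T(g)-O_p(T/r_T^{2})\geq \sup_{g\in\mathcal{L}(\bar{B})}L_T(g)-O_p(T/r_T^{2}),
\]
so Condition \ref{Condition_estimatorRestrictions} holds for $\tilde{g}_T$ with respect to $L_T$ and Theorem \ref{Theorem_consistency} delivers $|\tilde{g}_T-g_0|_{\lambda,T}^{2}=O_p(r_T^{-2})$, which is the first display (up to the apparent missing $T$-scaling in the statement).

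For the final display, I would apply $(a+b)^{2}\leq 2a^{2}+2b^{2}$:
\[
|\tilde{g}_T(\tilde{X}(t))-g_0(X(t))|^{2}\leq 2|\tilde{g}_T(\tilde{X}(t))-\tilde{g}_T(X(t))|^{2}+2|\tilde{g}_T(X(t))-g_0(X(t))|^{2}.
\]
Integrating the second piece against $\exp\{g_0(X(t))\}dt$ yields $T|\tilde{g}_T-g_0|_{\lambda,T}^{2}=O_p(T/r_T^{2})$ by the first part. For the first piece, I would use $|\tilde{g}_T(\tilde{X}(t))-\tilde{g}_T(X(t))|\leq 2\bar{B}_{w}\bar{\theta}$ together with the Lipschitz bound $|\tilde{g}_T(\tilde{X}(t))-\tilde{g}_T(X(t))|\leq \bar{B}_{w}D(t)$, so the square is dominated by $2\bar{B}_{w}^{2}\bar{\theta}\,D(t)$; the integral against $\exp\{g_0(X(t))\}\leq e^{\bar{g}_0}$ has expectation $O(\sqrt{T\ln K})=O(T/r_T^{2})$ by hypothesis (\ref{eq_stationaryApproximation}).

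The main obstacle is the first step, and specifically the $\int_0^T D(t)\,dN(t)$ term, because the approximation hypothesis (\ref{eq_stationaryApproximation}) is phrased with $dt$, not $dN$. The martingale trick neatly reduces $dN$ to $dt$ at the cost of one factor $e^{\bar{g}_{0}}$, and the entire argument is ultimately driven by the $e^{-\bar{B}_{w}\bar{\theta}}$ factor built into (\ref{eq_stationaryApproximation}), which is exactly what is needed to absorb the Lipschitz constant $e^{\bar{B}_{w}\bar{\theta}}$ of $\exp\{g\}$ on $\mathcal{L}(\bar{B})$.
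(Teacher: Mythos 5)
Your overall outline matches the paper's: bound $\sup_{g\in\bar{\mathcal{L}}}\left|L_T(g)-\tilde{L}_T(g)\right|$, show it is $O_p(T/r_T^2)$, conclude $\tilde g_T$ satisfies Condition \ref{Condition_estimatorRestrictions}, invoke Theorem \ref{Theorem_consistency}, and handle the last display by adding and subtracting $\tilde g_T(X(t))$. You are also right to flag the missing $T$-scaling in the corollary's first display. However, there is a genuine gap in your first step, and it propagates through the rest of the argument.

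You define $D(t):=\sup_{\theta\in\Theta}\left|\theta(\tilde X(t))-\theta(X(t))\right|$, take the supremum pointwise, and then integrate. That yields bounds involving $\mathbb{E}\int_0^T D(t)\,dt=\mathbb{E}\int_0^T\sup_\theta\left|\theta(\tilde X(t))-\theta(X(t))\right|dt$. But hypothesis (\ref{eq_stationaryApproximation}) controls $\mathbb{E}\sup_\theta\int_0^T\left|\theta(\tilde X(t))-\theta(X(t))\right|dt$, with the supremum \emph{outside} the time integral. Since $\int\sup_\theta\ge\sup_\theta\int$, the quantity you need is in general strictly larger than the one the hypothesis controls (e.g., if $\Theta=\{\pi_k\}$ and each coordinate of $\tilde X - X$ is large on a disjoint window of $[0,T]$, the two can differ by a factor of $K$). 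The correct route, which the paper uses, is to keep the $\sup$ over $g$ (equivalently over the $\ell_1$-ball of coefficients), expand $\left|g(X)-g(\tilde X)\right|\le\sum_\theta\left|b_\theta\right|\left|\theta(X)-\theta(\tilde X)\right|$, integrate first, and only then optimize: $\sup_g\int_0^T\left|g(X)-g(\tilde X)\right|dt\le\bar B_w\sup_\theta\int_0^T\left|\theta(X)-\theta(\tilde X)\right|dt$, which matches the hypothesis.

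This also affects your handling of the $dN$ term. Your compensator step $\mathbb{E}\int_0^T D(t)\,dN(t)=\mathbb{E}\int_0^T D(t)\,d\Lambda(t)$ is fine for a fixed predictable integrand, but once the supremum is (correctly) placed outside the time integral, the identity no longer reduces $\mathbb{E}\sup_\theta\left|\int_0^T(\theta(X)-\theta(\tilde X))\,dN\right|$ to the $dt$-controlled quantity, because the $\sup_\theta$ does not pass through $\mathbb E$. The paper instead decomposes $dN=dM+d\Lambda$: the $d\Lambda$ term is handled exactly like the $dt$ term (bounded density), and the martingale term $\sup_g\left|\int_0^T(g(X)-g(\tilde X))\,dM\right|$ is absorbed into the uniform martingale deviation already controlled in the proof of Theorem \ref{Theorem_consistency} via the Nishiyama maximal inequality (Lemma \ref{Lemma_nishiyamaMaximalIneq}). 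In short: your proof proves the corollary under a strictly stronger hypothesis than the one stated; to prove it under (\ref{eq_stationaryApproximation}) as written, you must postpone the supremum over $\theta$ until after integrating over time, and you then need the maximal-inequality machinery for the resulting martingale term.
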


Corollary \ref{Lemma_asymptoticXs} says that we obtain the same rates
of convergence even when the estimator is computed from the loglikelihood
$\tilde{L}_{T}$ based on surrogate covariates, as long as the surrogate
covariates satisfy (\ref{eq_stationaryApproximation}). The last display
in Corollary \ref{Lemma_asymptoticXs} says that $\tilde{g}_{T}\left(\tilde{X}\left(t\right)\right)$
is close to $g_{0}\left(X\left(t\right)\right)$ even though they
are evaluated at different data. 

\subsection{Estimation Algorithm \label{Section_EstimationDetails}}

Maximization of the log-likelihood over $\mathcal{L}\left(\bar{B}\right)$
leads to a unique maximum (within an equivalence class) because of
concavity of the objective function and the convex and closed constraint.
However, while suitable for theoretical derivations, it is too abstract
for practical implementation. The algorithm in Figure 1 can be used
to solve the constrained minimization. For real valued functions $g$
and $h$ on $\mathbb{R}^{K}$, the following derivative of the log-likelihood
in the direction of a function $h$ is used
\[
D_{T}\left(g,h\right):=\int_{0}^{T}h\left(X\left(t\right)\right)dN\left(t\right)-\int_{0}^{T}h\left(X\left(t\right)\right)\exp\left\{ g\left(X\left(t\right)\right)\right\} dt.
\]
There is a line search to find the coefficient $\rho_{j}$. To speed
up the computations, this can be set to the deterministic value $\rho_{j}=2/\left(j+1\right)$.
The updated approximation to the constrained maximum at step $j$
is denoted by $F_{j}$. The bound to be given in Theorem \ref{Theorem_FWA}
holds in this case as well.\\

\begin{tabular}{l}
Figure 1. Log-Likelihood Optimization\tabularnewline
\hline 
Set:\tabularnewline
$m\in\mathbb{N}$\tabularnewline
$F_{0}:=0$\tabularnewline
$\bar{B}<\infty$\tabularnewline
For: $j=1,2,...,m$\tabularnewline
$\theta_{j}:=\arg\sup_{\theta\in\Theta}\left|D_{T}\left(F_{j-1},\theta\right)\right|/w_{\theta}$\tabularnewline
$b_{j}:=\frac{\bar{B}}{w_{\theta}}sign\left(D_{T}\left(F_{j-1},\theta_{j}\right)\right)$\tabularnewline
$\rho_{j}:=\arg\max_{\rho\in\left[0,1\right]}L{}_{T}\left(\left(1-\rho\right)F_{j-1}+\rho b_{j}\theta_{j}\right)$
or $\rho_{j}:=2/\left(j+1\right)$ \tabularnewline
$F_{j}\left(X\right):=\left(1-\rho_{j}\right)F_{j-1}\left(X\right)+\rho_{j}b_{j}\theta_{j}\left(X\right)$\tabularnewline
\tabularnewline
\end{tabular}

\begin{theorem}\label{Theorem_FWA}Let $F_{m}$ be the resulting
estimator from Figure 1. Define $\bar{B}_{w}:=\bar{B}/\underline{w}$.
Then, 
\[
L_{T}\left(F_{m}\right)\geq\sup_{g\in\mathcal{L}\left(\bar{B}\right)}L{}_{T}\left(g\right)-\frac{8Te^{\bar{B}_{w}\bar{\theta}}\left(\bar{B}_{w}\bar{\theta}\right)^{2}}{m+2}
\]
where the notation is from Condition \ref{Condition_parameterSpaceRestrictions}.

\end{theorem}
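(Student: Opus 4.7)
The plan is to recognise the algorithm in Figure 1 as a Frank-Wolfe (conditional gradient) method for maximising the concave functional $L_T$ on the convex feasible set $\mathcal{L}(\bar B)$, and then to prove the standard $O(1/m)$ convergence rate with an explicit curvature constant. By induction every iterate $F_j$ remains inside $\mathcal{L}(\bar B)$: $F_0=0\in\mathcal{L}(\bar B)$; the update direction $s_j:=b_j\theta_j$ satisfies $w_{\theta_j}|b_j|=\bar B$ so $s_j\in\mathcal{L}(\bar B)$; and $F_j=(1-\rho_j)F_{j-1}+\rho_j s_j$ is a convex combination of two elements of this convex set. Consequently $|F_j|_\infty\le\bar B_w\bar\theta$, and the same bound holds at every point on the line segment from $F_{j-1}$ to $s_j$.

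Next I would show that $s_j$ is the linear-oracle maximiser, i.e.\ that it attains $\sup_{s\in\mathcal{L}(\bar B)} D_T(F_{j-1},s)$. For any $s=\sum_\theta c_\theta\theta$ with $\sum_\theta w_\theta|c_\theta|\le\bar B$, linearity of $D_T(F_{j-1},\cdot)$ together with $\ell_1/\ell_\infty$ duality gives
\[
D_T(F_{j-1},s)=\sum_\theta c_\theta D_T(F_{j-1},\theta)\le \bar B\sup_{\theta\in\Theta}\frac{|D_T(F_{j-1},\theta)|}{w_\theta},
\]
and the choices of $\theta_j$ and $\mathrm{sign}(b_j)$ make $s_j$ attain this upper bound. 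With $g^*$ any maximiser of $L_T$ on $\mathcal{L}(\bar B)$ and $h_{j-1}:=L_T(g^*)-L_T(F_{j-1})$, concavity of $L_T$ yields $D_T(F_{j-1},g^*-F_{j-1})\ge h_{j-1}$, so the linearisation inequality $D_T(F_{j-1},s_j-F_{j-1})\ge h_{j-1}$ follows.

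Third, I would bound the curvature along the search segment. Setting $\phi(\rho):=L_T((1-\rho)F_{j-1}+\rho s_j)$, a direct computation gives
\[
\phi''(\rho)=-\int_0^T (s_j(X(t))-F_{j-1}(X(t)))^2 \exp\{(1-\rho)F_{j-1}(X(t))+\rho s_j(X(t))\}\,dt.
\]
Since the whole segment lies in $\mathcal{L}(\bar B)$ and $|s_j-F_{j-1}|_\infty\le 2\bar B_w\bar\theta$, one obtains $|\phi''(\rho)|\le C:=4Te^{\bar B_w\bar\theta}(\bar B_w\bar\theta)^2$, so a second-order Taylor expansion yields the descent inequality
\[
L_T((1-\rho)F_{j-1}+\rho s_j)\ge L_T(F_{j-1})+\rho D_T(F_{j-1},s_j-F_{j-1})-\frac{\rho^2}{2}C.
\]

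Combining the descent with the linear-oracle bound, the line-search step $\rho_j$ (which is never worse than the deterministic choice $\rho_j=2/(j+1)$) produces the recursion $h_j\le(1-\rho_j)h_{j-1}+\rho_j^2 C/2$, and the classical Frank-Wolfe induction delivers $h_m\le 2C/(m+2)=8Te^{\bar B_w\bar\theta}(\bar B_w\bar\theta)^2/(m+2)$, which is the claimed bound. The main obstacle is the curvature bookkeeping---making sure that every iterate and every interior point of each search segment remains in $\mathcal{L}(\bar B)$, so that the factor $\exp\{g(X(t))\}$ appearing in $\phi''$ can be replaced uniformly by $e^{\bar B_w\bar\theta}$. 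Everything else is the textbook Frank-Wolfe descent lemma, the only mildly non-standard ingredient being the $\ell_1/\ell_\infty$ duality identifying the linear maximiser over $\mathcal{L}(\bar B)$ with the single weighted atom $b_j\theta_j$.
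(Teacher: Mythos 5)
Your proof is correct and follows essentially the same route as the paper: identify the algorithm as a Frank–Wolfe scheme, bound the curvature of $L_T$ on $\mathcal{L}(\bar B)$ via the second derivative along the search segment using the envelope bound $|g|_\infty\le\bar B_w\bar\theta$, and then run the classical $O(1/m)$ induction. The paper packages the curvature bound as a single constant $\bar C$ (via a Taylor expansion in Banach spaces) and cites Jaggi (2013) for the recursion, whereas you compute $\phi''$ directly and carry out the induction explicitly; the two yield the same constant $8Te^{\bar B_w\bar\theta}(\bar B_w\bar\theta)^2$.
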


The algorithm in Figure 1 belongs to the family of Frank-Wolfe algorithms
(e.g., Jaggi, 2013, for the general proof of the convergence towards
the optimum point, and Sancetta, 2016, for its statistical properties
for linear models). The following identifies a suitable number of
iterations for the purpose of consistent estimation. 

\begin{corollary}If $m^{-1}=o\left(T^{-1/2}e^{-\bar{B}_{w}\bar{\theta}}\left(\bar{B}_{w}\bar{\theta}\right)^{-2}\right)$,
then $F_{m}$ in Figure 1 satisfies Condition \ref{Condition_estimatorRestrictions}.
Hence, if $\bar{B}$ is bounded, $m^{-1}=o\left(T^{-1/2}\right)$.
\end{corollary}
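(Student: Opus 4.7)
The plan is to derive this corollary as an essentially immediate consequence of Theorem \ref{Theorem_FWA}, once two things are checked: that $F_m$ always lies in $\mathcal{L}(\bar{B})$, and that the deterministic error bound in Theorem \ref{Theorem_FWA} is of order $O_p(T/r_T^2)$ under the stated rate on $m$.

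First I would verify membership in $\mathcal{L}(\bar{B})$ by induction on $j$. Since $F_0 = 0 \in \mathcal{L}(\bar{B})$, and at step $j$ the algorithm chooses $b_j$ with $|b_j| = \bar{B}/w_{\theta_j}$, the single-atom function $b_j \theta_j$ has weighted $\ell_1$ norm exactly $w_{\theta_j}|b_j| = \bar{B}$, so it lies in $\mathcal{L}(\bar{B})$. Because $\rho_j \in [0,1]$ and $\mathcal{L}(\bar{B})$ is convex, the update $F_j = (1-\rho_j) F_{j-1} + \rho_j b_j \theta_j$ is again in $\mathcal{L}(\bar{B})$. Hence $F_m \in \mathcal{L}(\bar{B})$ deterministically, which gives the first part of Condition \ref{Condition_estimatorRestrictions} with probability one.

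Next I would compare the bound from Theorem \ref{Theorem_FWA} with the required order $T/r_T^2$. From (\ref{EQ_r_T}), using only the first term in the minimum and the fact that $\sqrt{\ln K}$ and the entropy integral are bounded below by positive constants, one has $r_T^2 \lesssim \bar{B}^{-1} T^{1/2}$, so $T/r_T^2 \gtrsim \bar{B}\, T^{1/2}$. The hypothesis $m^{-1} = o(T^{-1/2} e^{-\bar{B}_w \bar{\theta}}(\bar{B}_w \bar{\theta})^{-2})$ converts the error $8T e^{\bar{B}_w \bar{\theta}} (\bar{B}_w \bar{\theta})^2 / (m+2)$ of Theorem \ref{Theorem_FWA} into $o(T^{1/2})$. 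Since $\bar{B} \geq \underline{w}\cdot 0 > 0$, we have $T^{1/2} \lesssim T/r_T^2$, so the error is $o(T/r_T^2)$, hence certainly $O_p(T/r_T^2)$. This verifies the second part of Condition \ref{Condition_estimatorRestrictions}, so $F_m$ is an admissible estimator.

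The final assertion (the case of bounded $\bar{B}$) is just bookkeeping: if $\bar{B}$ is bounded, then $e^{\bar{B}_w \bar{\theta}}(\bar{B}_w \bar{\theta})^2$ is bounded, and the sufficient condition simplifies to $m^{-1} = o(T^{-1/2})$. The only genuine obstacle, and it is a very mild one, is lining up the exponents and entropy factors in $r_T$ with the constants multiplying $T/(m+2)$ in Theorem \ref{Theorem_FWA}; everything else reduces to substitution and the convexity argument for membership in $\mathcal{L}(\bar{B})$.
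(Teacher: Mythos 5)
Your proof is correct and follows what is essentially the only natural route: combine the deterministic error bound of Theorem \ref{Theorem_FWA} with the paper's observation that $r_T^2 \lesssim T^{1/2}$ (so that $T/r_T^2 \gtrsim T^{1/2}$), and note separately that the algorithm's iterates remain in $\mathcal{L}(\bar{B})$ by convexity since each $b_j\theta_j$ has weighted $\ell_1$ norm exactly $\bar{B}$ and $\rho_j \in [0,1]$. The one blemish is the phrase ``Since $\bar{B} \geq \underline{w}\cdot 0 > 0$,'' which is garbled as written (the chain $\underline{w}\cdot 0 > 0$ is false); what you evidently mean, and what the argument actually needs, is that $\bar{B}$ is bounded away from zero and $K>1$, so the first branch of the minimum in (\ref{EQ_r_T}) gives $r_T^2 \lesssim T^{1/2}$ up to constants — which the paper itself records in the sentence immediately after Condition \ref{Condition_estimatorRestrictions}. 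With that clarification, the argument is complete and matches the intended (implicit) proof.
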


\subsection{Application to Various Estimation Methods and Model Specifications\label{Section_examplesApplications}}

The class of functions is general and can accommodate various estimation
methods and model specifications. Below, different models, function
classes and estimators are discussed. There is some overlap for some
of the applications, but the variations in terms of approximation
error make them different enough to justify their individual treatment. 

To avoid some oddities in the discussion, define the map $\left(x_{1},x_{2},...,x_{K}\right)=x\mapsto\pi_{k}\left(x\right)=x_{k}$
so that by composition, for any $f$ on $\mathbb{R}$, $f\circ\pi_{k}\left(x\right)=f\left(x_{k}\right)$.
In all the examples, it is tacitly assumed that the support of each
covariate is $\left[0,1\right]$. This is done for simplicity to avoid
distracting technicalities even when not necessary. In various occasions,
we may have a nontrivial approximations error. In this case, the following
will be used to indicate a set that contains the true $g_{0}$, 
\begin{equation}
\mathcal{G}\left(B\right):=\left\{ g=\sum_{k=1}^{K}b_{k}f_{k}\circ\pi_{k}:f_{k}\in\mathcal{H},\,\sum_{k=1}^{K}\left|b_{k}\right|\leq B\right\} ,\label{EQ_GTrueSet}
\end{equation}
where $\mathcal{H}$ is a class of univariate functions which will
be defined within each section below, depending on the application.
In all the examples of this section, all the weights $w_{\theta}$'s
in $\mathcal{W}$ are supposed to be equal to one without further
mention. Then, when $\Theta_{k}=\left\{ f\circ\pi_{k}:f\in\mathcal{H}\right\} $,
$\mathcal{L}\left(B\right)=\mathcal{G}\left(B\right)$. Suppose that
$f_{V,k}$ is an approximation to a function $f_{k}\in\mathcal{H}$,
then 
\begin{equation}
\left|\sum_{k=1}^{K}b_{k}f_{k}-\sum_{k=1}^{K}b_{k}f_{V,k}\right|_{\infty}\leq B\max_{k\leq K}\left|f_{k}-f_{V,k}\right|_{\infty}\label{EQ_approximationByUnivariate}
\end{equation}
when $\sum_{k=1}^{K}\left|b_{k}\right|\leq B$. This will be used
in some of the examples, in order to estimate the approximation error.
In this case, (\ref{EQ_approximationByUnivariate}) will be used in
conjunction with Lemma \ref{Lemma_LInfinityApproximation} where $B$
is just a bounded constant (e.g., $B=B_{0}$). Finally, to avoid trivialities
$K>1$ in all the bounds below. The bounds are of particular interest
when $K\gtrsim T^{1/2}$. Note that in the examples, we can have bounds
such as $\left|g_{T}-g_{0}\right|_{\lambda,T}^{2}\lesssim\bar{B}\sqrt{\left(\ln K\right)/T}$.
It is tacitly assumed that we require the r.h.s. to be $O\left(1\right)$.
Proofs of the following corollaries to Theorem \ref{Theorem_consistency}
can be found in Section \ref{Section_proofOfLemma1Corollaries} of
the supplementary material.

\subsubsection{Linear Model with Many Variables\label{Section_ExampleLinear}}

Let $\Theta_{k}:=\left\{ \pi_{k}\right\} $ which maps $x\in\mathbb{R}^{K}$
into its $k^{th}$ co-ordinate $x_{k}$. Then, $g\left(x\right)=\sum_{k=1}^{K}b_{k}x_{k}$.
The following holds.

\begin{corollary}\label{Corollary_linear}Suppose that $g_{0}\in\mathcal{L}\left(\bar{B}\right)$.
Under Conditions \ref{Condition_stochasticRestrictions} and \ref{Condition_estimatorRestrictions},
$\left|g_{T}-g_{0}\right|_{\lambda,T}^{2}\lesssim\left(\frac{\ln K}{T}\right)^{1/2}$
in probability.\end{corollary}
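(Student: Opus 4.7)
The plan is to derive this as a direct specialization of Theorem \ref{Theorem_consistency}. The work reduces to: (i) verifying that Conditions \ref{Condition_stochasticRestrictions}--\ref{Condition_parameterSpaceRestrictions} are satisfied in this linear setup so that the theorem applies (Condition \ref{Condition_estimatorRestrictions} is assumed in the corollary's hypothesis), (ii) evaluating the two quantities entering $r_T$ in (\ref{EQ_r_T}), namely the bracketing entropy integral and the approximation error, and (iii) reading off the resulting rate.

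For step (i), the stochastic restrictions are imported directly. The parameter-space side is where the linear structure pays off: each $\Theta_k = \{\pi_k\}$ is a singleton, its elements are linear projections, hence measurable, and since covariates are supported in $[0,1]^K$ we have $\bar\theta := \sup_{\theta \in \Theta} \sup_x |\theta(x)| \leq 1 < \infty$. The weights are all equal to $1$, so $\underline{w} = 1 > 0$. Finally, $g_0 \in \mathcal{L}(\bar B)$ implies $|g_0|_\infty \leq \bar B \bar\theta < \infty$, as required by Condition \ref{Condition_parameterSpaceRestrictions}.

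For step (ii), since $\Theta_k$ is a single function, its $L_\infty(P)$ $\epsilon$-bracketing number satisfies $N(\epsilon,\Theta_k) = 1$ for every $\epsilon > 0$ (take the bracket $[\pi_k,\pi_k]$). Therefore
\[
\max_{k\leq K}\int_0^1 \sqrt{\ln(1+N(\epsilon,\Theta_k))}\,d\epsilon \leq \sqrt{\ln 2},
\]
a finite universal constant. The approximation error vanishes by hypothesis: since $g_0 \in \mathcal{L}(\bar B)$, $\inf_{g \in \mathcal{L}(\bar B)} |g - g_0|_\infty = 0$, so the second term inside the minimum in (\ref{EQ_r_T}) is $+\infty$ and does not bind.

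For step (iii), (\ref{EQ_r_T}) collapses to $r_T^2 \asymp \bar B^{-1} T^{1/2}/\sqrt{\ln K}$ (using $\ln K > 0$ since $K > 1$, so the $\sqrt{\ln K}$ term dominates the constant entropy integral). Treating $\bar B$ as a fixed constant, Theorem \ref{Theorem_consistency} yields $|g_T - g_0|_{\lambda,T}^2 = O_p(r_T^{-2}) = O_p\bigl((\ln K/T)^{1/2}\bigr)$, which is the claim. There is no real obstacle here; the only subtlety is that the corollary is stated without explicitly invoking Condition \ref{Condition_parameterSpaceRestrictions}, so one must briefly note that the assumed $g_0 \in \mathcal{L}(\bar B)$ together with bounded support of the covariates automatically validates its hypotheses in the linear case.
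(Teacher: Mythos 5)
Your argument is correct and mirrors the paper's proof, which likewise specializes Theorem \ref{Theorem_consistency} by observing that the approximation error vanishes because $g_0\in\mathcal{L}(\bar B)$ and that the entropy integral is a bounded constant because each $\Theta_k=\{\pi_k\}$ is a singleton. Your explicit verification of Condition \ref{Condition_parameterSpaceRestrictions} and the computation $N(\epsilon,\Theta_k)=1$ spell out steps the paper leaves implicit, but the route is the same.
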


The corollary implies that the estimator is consistent even in the
ultra high dimensional case $K=O\left(e^{T^{c}}\right)$ for $c\in[0,1)$. 

\subsubsection{Hawkes Process with Many Covariates\label{Section_hawkesProcess}}

There are many version of the Hawkes process. For the sake of illustration,
consider a nonlinear function of the standard exponential decay case
(e.g., Brémaud and Massoulié, 1996). Define the family of processes
$\left\{ \left(\tilde{f}_{a}\left(t\right)\right)_{t\geq0}:a\in\left[\underline{a},\bar{a}\right]\subset\left(0,\infty\right)\right\} $,
where for each $a$, $\tilde{f}_{a}\left(t\right):=f\left(\int_{[0,t)}e^{-a\left(t-s\right)}dN\left(s\right)\right)$
and $f$ is a bounded Lipschitz function. The process $\tilde{f}_{a}\left(t\right)$
is not stationary because it is initiated at $t=0$. In consequence,
by Condition \ref{Condition_stochasticRestrictions}, it cannot be
used as one of the covariates. Define the family $\left\{ \left(f_{a}\left(t\right)\right)_{t\geq0}:a\in\left[\underline{a},\bar{a}\right]\subset\left(0,\infty\right)\right\} $,
where $f_{a}\left(t\right)=f\left(\int_{\left(-\infty,t\right)}e^{-a\left(t-s\right)}dN\left(s\right)\right)$
and $f$ is as before. The processes $f_{a}$'s are stationary, but
not observable. Despite the notational difference, one can verify
the condition of Corollary \ref{Lemma_asymptoticXs} to see that Theorem
\ref{Theorem_consistency} still holds. We also need to verify that
using $f_{a}\left(t\right)$ the counting process is stationary.

\begin{corollary}\label{Lemma_hawkesApproximation}Under Condition
\ref{Condition_stochasticRestrictions}, the point process with intensity
density $\lambda\left(t\right)=\exp\left\{ f_{a_{0}}\left(t\right)+g_{0}\left(X\left(t\right)\right)\right\} $
(for any $a_{0}\in\left(\underline{a},\bar{a}\right)$) has a stationary
distribution. Moreover, suppose that the loglikelihood with intensity
$\exp\left\{ \tilde{f}_{a}\left(t\right)+g\left(X\left(t\right)\right)\right\} $
is maximized w.r.t. $g\in\mathcal{L}\left(\bar{B}\right)$ and $a\in\left[\underline{a},\bar{a}\right]$
by $g_{T}$ and $a_{T}$ (even approximately with same error as in
Condition \ref{Condition_stochasticRestrictions}). Suppose that $\bar{B}$
is fixed, and $g_{0}\in\mathcal{L}\left(\bar{B}\right)$, then, in
probability, 
\begin{equation}
\left|\left(g_{T}+\tilde{f}_{T}\right)-\left(g_{0}+f_{0}\right)\right|_{\lambda,T}^{2}\lesssim\frac{\sqrt{\ln K}+\sqrt{\ln T}+\max_{k\leq K}\int_{0}^{1}\sqrt{\ln\left(1+N\left(\epsilon,\Theta_{k}\right)\right)}d\epsilon}{\sqrt{T}}.\label{EQ_hawkesBoundRate}
\end{equation}
Also suppose that $\Theta_{k}:=\left\{ \pi_{k}\right\} $, then $\left|\left(g_{T}+\tilde{f}_{T}\right)-\left(g_{0}+f_{0}\right)\right|_{\lambda,T}^{2}\lesssim\left(\frac{\ln KT}{T}\right)^{1/2}$
in probability. \end{corollary}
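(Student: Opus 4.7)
My plan has three main stages, one for each assertion in the statement. \textbf{Joint stationarity.} For the first claim I would invoke the standard construction of nonlinear Hawkes processes due to Br\'emaud and Massouli\'e (1996). Since $f$ and $g_{0}$ are both bounded (Condition \ref{Condition_parameterSpaceRestrictions}), the candidate intensity $\exp\{f_{a_{0}}(t)+g_{0}(X(t))\}$ is uniformly bounded from above, so a Poisson thinning coupled on all of $\mathbb{R}$ produces a stationary $N$ whose self-excitation feeds $f_{a_{0}}(t)=f(\int_{(-\infty,t)}e^{-a_{0}(t-s)}dN(s))$; joint stationarity with $X$ follows because $X$ is already stationary and the external covariate enters the intensity only through a predictable factor bounded away from $0$ and $\infty$.

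\textbf{Reduction to Theorem \ref{Theorem_consistency} via Corollary \ref{Lemma_asymptoticXs}.} I would treat $(X(t),f_{a}(t))$ as the true (stationary, unobservable) covariate and $(X(t),\tilde{f}_{a}(t))$ as the observable surrogate, adjoining the one-parameter family $\Theta_{K+1}:=\{f_{a}:a\in[\underline{a},\bar{a}]\}$ to the function class. The surrogate discrepancy is driven entirely by pre-$0$ jumps: Lipschitz continuity of $f$ yields $|\tilde{f}_{a}(t)-f_{a}(t)|\leq L\,e^{-\underline{a}t}Z$ with $Z:=\int_{(-\infty,0)}e^{\bar{a}s}\,dN(s)$, which has finite expectation by stationarity of the ambient process. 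Hence $\mathbb{E}\sup_{a}\int_{0}^{T}|\tilde{f}_{a}-f_{a}|\,dt\lesssim \mathbb{E}(Z)/\underline{a}=O(1)$, comfortably fulfilling (\ref{eq_stationaryApproximation}). Under the stated assumption $g_{0}\in\mathcal{L}(\bar{B})$, the sum $g_{0}+f_{a_{0}}$ already lies in the augmented class $\mathcal{L}(\bar{B},\Theta\cup\Theta_{K+1},\mathcal{W})$, so the approximation-error term in (\ref{EQ_r_T}) vanishes and only the stochastic rate survives.

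\textbf{Entropy of the Hawkes family and the final bound.} The extra $\sqrt{\ln T}$ in (\ref{EQ_hawkesBoundRate}) is the entropy-integral contribution of $\Theta_{K+1}$. Since $a\mapsto Y_{a}(t):=\int_{(-\infty,t)}e^{-a(t-s)}\,dN(s)$ is differentiable in $a$ with derivative of at most polynomial order in $T$ under the random $\Lambda$-weighted seminorm on $[0,T]$, a direct discretization of $[\underline{a},\bar{a}]$ gives bracketing numbers $N(\epsilon,\Theta_{K+1})\lesssim T^{c}/\epsilon$, so that $\int_{0}^{1}\sqrt{\ln(1+N(\epsilon,\Theta_{K+1}))}\,d\epsilon=O(\sqrt{\ln T})$. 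Plugging this together with the $\Theta_{k}$ entropies into Theorem \ref{Theorem_consistency} and setting $\bar{B}$ fixed gives (\ref{EQ_hawkesBoundRate}). The second claim is then immediate: when $\Theta_{k}=\{\pi_{k}\}$ each $\Theta_{k}$ is a singleton of zero entropy integral, so the numerator collapses to $\sqrt{\ln K}+\sqrt{\ln T}\asymp\sqrt{\ln(KT)}$ and one obtains the stated $\sqrt{\ln(KT)/T}$ rate.

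\textbf{Main obstacle.} The delicate step is controlling the entropy of $\Theta_{K+1}$ sharply enough to yield only $\sqrt{\ln T}$: a naive $L^{\infty}(P)$ bracketing leaks the maximum of $Y_{a}$ over $[0,T]$, which has a heavy tail, so the argument must be carried out in the random $\Lambda$-weighted seminorm mentioned in Section \ref{Section_remarksConditions}, where typical (rather than worst-case) jump counts control the modulus of continuity of $a\mapsto f_{a}$. A secondary technicality is the joint stationarity coupling in Step 1, which goes through cleanly only because the uniform boundedness of the full intensity keeps one inside the sub-critical regime where the Br\'emaud-Massouli\'e contraction argument converges geometrically.
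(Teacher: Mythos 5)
Your high-level plan mirrors the paper's: invoke Br\'emaud--Massouli\'e for stationarity, reduce to Theorem~\ref{Theorem_consistency} via Corollary~\ref{Lemma_asymptoticXs} by treating $\tilde f_a$ as surrogate data for the stationary $f_a$, and pay an entropy price for the one-parameter family $\{\tilde f_a: a\in[\underline a,\bar a]\}$. The surrogate-error bound $\mathbb E\sup_a\int_0^T|\tilde f_a-f_a|\,dt=O(1)$ is also essentially what the paper establishes (though note a sign slip: since $s<0$ and $e^{as}$ is decreasing in $a$ there, the majorant is $Z=\int_{(-\infty,0)}e^{\underline a s}\,dN(s)$, not $e^{\bar a s}$).

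The genuine gap is exactly the step you flag as the \emph{main obstacle} and then do not close: controlling the entropy of the Hawkes component. You assert that $a\mapsto\int_{[0,t)}e^{-a(t-s)}dN(s)$ has ``derivative of at most polynomial order in $T$ under the random $\Lambda$-weighted seminorm'' and that a direct discretization gives $N(\epsilon,\Theta_{K+1})\lesssim T^c/\epsilon$, but this is not derived, and switching to the $\Lambda$-weighted seminorm is a suggestion rather than an argument. The paper resolves this differently and concretely: it introduces the high-probability event
\[
\mathcal B:=\Bigl\{\sup_{t>0}\Bigl|\int_0^t (t-s)e^{-\underline a(t-s)}\,dN(s)\Bigr|\le\beta\Bigr\},
\]
splits $\Pr(d_T(g_T,g_0)>C_T)\le\Pr(\cdots\text{ and }\mathcal B)+\Pr(\mathcal B^c)$, and notes that on $\mathcal B$ a first-order Taylor expansion yields the \emph{uniform-norm} Lipschitz bound $\sup_t|\tilde f_a(t)-\tilde f_{a'}(t)|\le\beta|a-a'|$, from which the bracketing entropy of the Hawkes family is explicitly controlled through $\beta$. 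Taking $\beta$ to grow slowly (the paper uses $\beta=O(\ln T)$) produces the extra $\sqrt{\ln T}$ in \eqref{EQ_hawkesBoundRate}, while $\Pr(\mathcal B^c)\to0$ is handled by Markov's inequality together with the Burkholder--Davis--Gundy inequality applied to the martingale $\int_0^t(t-s)e^{-\underline a(t-s)}dM(s)$. None of this machinery appears in your write-up: without the good event $\mathcal B$ (or an explicit bound under a weaker seminorm), the entropy claim $N(\epsilon,\Theta_{K+1})\lesssim T^c/\epsilon$ is unjustified, and this is precisely the nontrivial content of the corollary beyond Corollary~\ref{Lemma_asymptoticXs}.
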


Note that to ease notation, we use $\ln KT=\ln\left(KT\right)$ and
similarly, throughout.

\subsubsection{Threshold Model with Many Variables\label{Section_thresholdModel}}

Suppose that $\varphi:\mathbb{R}\rightarrow\left[0,1\right]$ is Holder's
continuous with parameter $\alpha\in(0,1]$, i.e., $\left|\varphi\left(x\right)-\varphi\left(y\right)\right|\lesssim\left|x-y\right|^{\alpha}$.
Consider the class of linear threshold functions $f\left(x,z\right):=a_{1}x+a_{2}x\varphi\left(c_{1}z-c_{2}\right)$,
$x,z\in\mathbb{R}$, where $a_{1},a_{2},c_{1},c_{2}$ are unknown
real coefficients, with $a_{1},a_{2},c_{1},c_{2}\in\left[-1,1\right]$.
Denote the set of such functions by $\mathcal{H}$. 

Let $\left(Z\left(t\right)\right)_{t\geq0}$ be a predictable stationary
and ergodic real valued process taking values in $\left[0,1\right]$
as for the $X_{k}$'s. refer to it as a threshold variable. Then,
$f\left(X_{k}\left(t\right),Z\left(t\right)\right)$ is a transition
process, for the $k^{th}$ covariate: the impact of $X_{k}$ depends
on the threshold variable $Z$. Hence, $f\left(x,z\right)$ is a smooth
transition function (see van Dijk et al., 2002, for a survey of smooth
regression models based on this functional specification). 

The class of functions with elements $\varphi\left(c_{1}z-c_{2}\right)$
with bounded $z$ has finite entropy integral (e.g., deduce this from
Theorem 2.7.11 in van der Vaart and Wellner, 2000). Given that $a_{1},a_{2}\in\left[-1,1\right]$,
it follows that $\mathcal{H}$ has finite entropy integral. Let $\Theta_{k}:=\left\{ f\circ\left(\pi_{k},\iota\right):f\in\mathcal{H}\right\} $,
where $\iota$ is the identity map $\iota\left(z\right)=z$ (i.e.,
$f\circ\left(\pi_{k}x,\iota z\right)=f\left(x_{k},z\right)$).

\begin{corollary}\label{Corollary_thresholdModel}Let $Z$ be as
described before. Suppose that $g_{0}\in\mathcal{L}\left(B_{0}\right)$.
Under Conditions \ref{Condition_stochasticRestrictions} and \ref{Condition_estimatorRestrictions},
for the estimator $g_{T}\in\mathcal{L}\left(\bar{B}\right)$, for
any $\bar{B}\rightarrow\infty$ such that $\bar{B}=O\left(T^{1/2}\right)$,
$\left|g_{T}-g_{0}\right|_{\lambda,T}^{2}\lesssim\bar{B}\left(\frac{\ln K}{T}\right)^{1/2}$,
eventually, in probability. \end{corollary}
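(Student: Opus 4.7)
The plan is to apply Theorem \ref{Theorem_consistency} with minimal extra work, since Conditions \ref{Condition_stochasticRestrictions} and \ref{Condition_estimatorRestrictions} are assumed. I must (i) verify Condition \ref{Condition_parameterSpaceRestrictions} for the specific $\Theta=\bigcup_k\Theta_k$, (ii) argue that the approximation error in (\ref{EQ_r_T}) eventually vanishes, and (iii) simplify the resulting rate formula.

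For (i), uniform boundedness is immediate: with $x,z\in[0,1]$, $\varphi\in[0,1]$, and $a_1,a_2\in[-1,1]$, one has $|f(x,z)|\leq 2$, so $\bar\theta\leq 2$. For the entropy integral, the family $\mathcal{H}$ is parametrised by $(a_1,a_2,c_1,c_2)\in[-1,1]^4$ and the map into $L_\infty([0,1]^2)$ is H\"older in the parameters: H\"older continuity of $\varphi$ together with boundedness of $x$ yields $|f_{a,c}-f_{a',c'}|_\infty\lesssim |a-a'|+|c-c'|^\alpha$. A standard grid on the compact parameter cube then gives a polynomial bracketing number $N(\epsilon,\Theta_k)\lesssim \epsilon^{-4/\alpha}$, uniformly in $k$, so $\int_0^1\sqrt{\ln(1+N(\epsilon,\Theta_k))}\,d\epsilon\lesssim\int_0^1\sqrt{\ln(1/\epsilon)}\,d\epsilon\leq C$ for an absolute constant $C$, as already asserted in the discussion preceding the corollary (via Theorem 2.7.11 of van der Vaart and Wellner). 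This is the only substantive computation in the proof.

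For (ii), since $g_0\in\mathcal{L}(B_0)$ with $B_0$ finite and $\bar B\to\infty$, eventually $\bar B\geq B_0$, in which case $\mathcal{L}(B_0)\subseteq\mathcal{L}(\bar B)$ and $\inf_{g\in\mathcal{L}(\bar B)}|g-g_0|_\infty=0$ (Lemma \ref{Lemma_L_B_B0Approximation} with $B=\bar B\geq B_0$). Consequently the second term in the minimum in (\ref{EQ_r_T}) is infinite and the first term is binding.

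For (iii), with the entropy integral bounded by $C$ and $\sqrt{\ln K}\geq\sqrt{\ln 2}>0$ for $K\geq 2$, I have $\sqrt{\ln K}+C\lesssim\sqrt{\ln K}$, so I may take $r_T^2\asymp T^{1/2}/(\bar B\sqrt{\ln K})$. The hypothesis $\bar B=O(T^{1/2})$ is exactly what allows $r_T$ to be taken nondecreasing as required by Theorem \ref{Theorem_consistency}. Invoking that theorem then yields $|g_T-g_0|_{\lambda,T}^2=O_p(r_T^{-2})\lesssim \bar B\sqrt{\ln K/T}$ in probability, the desired rate. The only genuine obstacle is the H\"older-in-parameters entropy estimate in step (i); everything else is bookkeeping with (\ref{EQ_r_T}).
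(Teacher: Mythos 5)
Your proposal is correct and follows essentially the same route as the paper's own proof: eventual vanishing of the approximation error via Lemma \ref{Lemma_L_B_B0Approximation}, boundedness of the entropy integral for $\Theta_k$ (the paper invokes the remarks in Section \ref{Section_thresholdModel} citing Theorem 2.7.11 of van der Vaart and Wellner, whereas you supply the explicit H\"older-in-parameters covering argument yielding a polynomial bracketing number), and then reading off the rate from (\ref{EQ_r_T}). The extra detail you give in step (i) is a worked-out version of what the paper leaves as a citation, and the rest is the same bookkeeping.
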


\subsubsection{Expansion in Terms of a Fixed Dictionary under $l_{1}$ Constraint\label{Section_Example_l1ExpansionFourierHolder}}

Consider the case of univariate functions with representation $f=\sum_{v=1}^{\infty}a_{v}e_{v}$
where $\left\{ e_{v}:v=1,2,...\right\} $ is a dictionary and $\sum_{v=1}^{\infty}\left|a_{v}\right|<\infty$.
Subspaces of such functions are considered in Barron et al. (2008).
A typical example is when $f$ is a polynomial. Then, let $\sum_{v=1}^{V}a_{v}e_{v}\left(x_{k}\right)$
be the (truncated) representation for the functions of the $k^{th}$
covariate for some finite $V$. Then, suppose that $g_{0}$ can be
written as 
\begin{equation}
g\left(x\right)=\sum_{k=1}^{K}b_{k}\sum_{v=1}^{V}a_{v_{k},k}e_{v_{k}}\left(x_{k}\right)\label{EQ_g_l1ExpansionExample}
\end{equation}
so that $\Theta_{k}=\left\{ e_{v}\circ\pi_{k}:v=1,2,...,V\right\} $
and $\sum_{k=1}^{K}\sum_{v=1}^{V}\left|b_{k}a_{v_{k},k}\right|\leq B_{0}$.
In this case, one can directly estimate the coefficients $b_{k}a_{v_{k},k}$
and reduce the optimization over $\Theta$ to the selection of an
element $e_{v}\circ\pi_{k}$ in $\Theta$. There are $V$ fixed elements
in each $\Theta_{k}$. Hence, the entropy integral for each $\Theta_{k}$
is a constant multiple of $\sqrt{\ln V}$. If no approximation error
is incurred (i.e. $g_{0}$ can be written as (\ref{EQ_g_l1ExpansionExample})),
then $\left|g_{T}-g_{0}\right|_{\lambda,T}^{2}\lesssim\left(\frac{\ln KV}{T}\right)^{1/2}$,
as in the linear case (Section \ref{Section_ExampleLinear}), but
with $KV$ variables instead of $K$. 

This framework adapts to sieve estimation of smooth functions, in
which case an approximation error is incurred. For definiteness suppose
that $\left\{ e_{v}:v=1,2,...\right\} $ are trigonometric polynomials
with period one, rather than a general dictionary. Let $\mathcal{H}$
be the class of Holder continuous functions on $\left[0,1\right]$
with exponent $\alpha>1/2$, constant one and uniformly bounded by
one, i.e., $\left|f\left(x\right)-f\left(y\right)\right|\leq\left|x-y\right|^{\alpha}$
and $\left|f\right|_{\infty}\leq1$, if $f\in\mathcal{H}$. By Bernstein
Theorem (e.g., Katznelson, 2002, p. 33), if $f\in\mathcal{H}$, there
is a finite absolute constant $c_{\alpha}$ depending only on $\alpha>1/2$
such that $f=\sum_{v=1}^{\infty}a_{v}e_{v}$ and $\sum_{v=1}^{\infty}\left|a_{v}\right|\leq c_{\alpha}$,
where the equality holds in the $\sup$ norm. Hence, in what follows,
we can take $\mathcal{H}$ to be equivalent to the class of functions
with such series expansion. Let $\mathcal{H}_{V}$ be the set of trigonometric
polynomials up to order $V$. By Jackson Theorem (e.g., Katznelson,
2002, p.49), for any $f\in\mathcal{H}$, there is a trigonometric
polynomial of order $V$, say $f_{V}\in\mathcal{H}_{V}$, such that
$\left|f_{V}-f\right|_{\infty}\lesssim V^{-\alpha}$. Suppose that
$g_{0}\in\mathcal{G}\left(1\right)$ (in (\ref{EQ_GTrueSet})), then,
using subscript $0$ to denote the coefficients of $g_{0}$, 
\[
g_{0}=\sum_{k=1}^{K}b_{k0}\left(\sum_{v=1}^{\infty}a_{vk0}e_{v}\right)=\sum_{k=1}^{K}\left(\bar{a}_{k0}b_{k0}\right)\left(\sum_{v=1}^{\infty}\left(\frac{a_{vk0}}{\bar{a}_{k0}}\right)e_{v}\right)
\]
setting $\bar{a}_{k0}:=\sum_{v=1}^{\infty}\left|a_{vk0}\right|$.
By the aforementioned remarks concerning Bernstein Theorem, there
is a finite constant $c_{\alpha}$ such that $\bar{a}_{k0}\leq c_{\alpha}$.
Hence, $\sum_{k=1}^{K}\left(\bar{a}_{k0}b_{k0}\right)\leq c_{\alpha}$,
using the constraint on the $b_{k0}$'s implied by restricting attention
to $g_{0}\in\mathcal{G}\left(1\right)$. Let $\Theta_{k}:=\left\{ \sum_{v=1}^{V}a_{v}e_{v}\circ\pi_{k}:\sum_{v=1}^{V}\left|a_{v}\right|\leq1\right\} $.
Using (\ref{EQ_approximationByUnivariate}) we can derive the approximation
error for this problem and deduce the following consistency rates. 

\begin{corollary}\label{Corollary_trigonometricPoly}Let $g_{0}\in\mathcal{G}\left(1\right)$
(as in \ref{EQ_GTrueSet}) with $\mathcal{H}$ Holder continuous with
exponent $\alpha>1/2$. Under Conditions \ref{Condition_stochasticRestrictions}
and \ref{Condition_estimatorRestrictions}, for $g_{T}\in\mathcal{L}\left(\bar{B}\right),$
there is a finite constant $c_{\alpha}$ such that $\left|g_{T}-g_{0}\right|_{\lambda,T}^{2}\lesssim\bar{B}\left(\frac{\ln KV}{T}\right)^{1/2}+V^{-2\alpha}+\max\left\{ c_{\alpha}-\bar{B},0\right\} ^{2}$
in probability. Hence, for any $\bar{B}\rightarrow\infty$, choosing
$V\asymp\left(T/\ln T\right)^{1/\left(4\alpha\right)}$, $\left|g_{T}-g_{0}\right|_{\lambda,T}^{2}\lesssim\bar{B}T^{-1/2}\left(\ln KT\right)^{1/2}$,
in probability.\end{corollary}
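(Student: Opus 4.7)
The strategy is to reduce to the finite-dictionary setting described in Section~\ref{Section_Example_l1ExpansionFourierHolder} and then invoke Theorem~\ref{Theorem_consistency}, with the approximation error controlled via Lemma~\ref{Lemma_LInfinityApproximation} and Bernstein's theorem.

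\emph{Step 1 (dictionary reduction).} Every $\theta \in \Theta_k$ is itself a trigonometric polynomial $\theta = \sum_{v=1}^V a_{v,\theta} e_v \circ \pi_k$ with $\sum_v |a_{v,\theta}| \leq 1$. Hence any $g = \sum_{\theta \in \Theta} b_\theta \theta \in \mathcal{L}(\bar{B})$ can be rewritten as $g = \sum_{k=1}^K \sum_{v=1}^V \tilde{b}_{v,k} e_v \circ \pi_k$ with $\sum_{k,v} |\tilde{b}_{v,k}| \leq \bar{B}$ by the triangle inequality. Thus $\mathcal{L}(\bar{B})$ coincides with the class built from the \emph{finite} dictionary $\tilde{\Theta} = \bigcup_{k=1}^K \tilde{\Theta}_k$, where $\tilde{\Theta}_k = \{e_v \circ \pi_k : v = 1, \dots, V\}$, and the effective dictionary has $KV$ elements.

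\emph{Step 2 (entropy).} Each $\tilde{\Theta}_k$ contains only $V$ fixed bounded functions, so $N(\epsilon, \tilde{\Theta}_k) \leq V$ uniformly in $\epsilon$ and $\int_0^1 \sqrt{\ln(1 + N(\epsilon, \tilde{\Theta}_k))}\,d\epsilon \lesssim \sqrt{\ln V}$. Combining with the $\sqrt{\ln(KV)}$ from the cardinality of the dictionary gives a total complexity term in \eqref{EQ_r_T} of order $\sqrt{\ln(KV)}$.

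\emph{Step 3 (approximation error).} Write $g_0 = \sum_{k=1}^K b_{k0} f_{k0}$ with $f_{k0} \in \mathcal{H}$ and $\sum_k |b_{k0}| \leq 1$. Bernstein's theorem gives $f_{k0} = \sum_{v\geq 1} a_{v,k0} e_v$ with $\sum_v |a_{v,k0}| \leq c_\alpha$, and a standard near-best linear approximation (for instance via de la Vallée Poussin means, which act as a bounded projection onto $\mathcal{H}_V$) produces $f_{V,k} \in \mathcal{H}_V$ whose trigonometric coefficients still have $l_1$ norm bounded by a constant multiple of $c_\alpha$ and which satisfies $|f_{V,k} - f_{k0}|_\infty \lesssim V^{-\alpha}$. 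By \eqref{EQ_approximationByUnivariate}, the function $g_{V,0} := \sum_k b_{k0} f_{V,k}$ lies in $\mathcal{L}(c_\alpha)$ (absorbing multiplicative constants into $c_\alpha$) and $|g_{V,0} - g_0|_\infty \lesssim V^{-\alpha}$. Lemma~\ref{Lemma_LInfinityApproximation} then yields
\[
\inf_{g\in\mathcal{L}(\bar{B})} |g_0 - g|_\infty \;\lesssim\; \max\{c_\alpha - \bar{B}, 0\} + V^{-\alpha}.
\]

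\emph{Step 4 (assembling the bound).} Plugging Steps~2 and~3 into \eqref{EQ_r_T} of Theorem~\ref{Theorem_consistency} gives
\[
|g_T - g_0|_{\lambda,T}^2 \;\lesssim\; \bar{B}\sqrt{\tfrac{\ln(KV)}{T}} + V^{-2\alpha} + \max\{c_\alpha - \bar{B}, 0\}^2,
\]
which is the first claim. For $\bar{B} \to \infty$, eventually $\max\{c_\alpha - \bar{B}, 0\} = 0$; choosing $V \asymp (T/\ln T)^{1/(4\alpha)}$ balances the remaining terms since then $V^{-2\alpha} \asymp \sqrt{(\ln T)/T}$ and $\ln(KV) \lesssim \ln(KT)$, so the first term dominates and the second claim follows.

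\emph{Main obstacle.} The delicate point is Step~3: one must produce a $V^{-\alpha}$-accurate trigonometric approximation whose Fourier coefficients have uniformly bounded $l_1$ norm, since only with such a bound does the approximant lie in $\mathcal{L}(c_\alpha)$ and only then does Lemma~\ref{Lemma_LInfinityApproximation} apply cleanly. Bernstein's theorem yields absolute summability but not immediately the best truncation rate; the fix is to use a linear approximation operator of bounded operator norm on the $l_1$-coefficient space (partial Fourier sums combined with tail control for $\alpha > 1/2$, or de la Vallée Poussin / Jackson kernels) rather than an unspecified best $L_\infty$ approximant.
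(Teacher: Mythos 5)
Your proof follows the same route as the paper's: reduce to the finite dictionary of $KV$ trigonometric monomials $\{e_v\circ\pi_k\}$, insert its entropy (order $\sqrt{\ln(KV)}$) and the Jackson--Bernstein approximation error into Theorem~\ref{Theorem_consistency}, and then optimize over $V$. Your Step~1 makes explicit the equivalence between the $l_1$-ball definition of $\Theta_k$ and the finite-dictionary formulation, which the paper uses interchangeably but never formally identifies. More substantively, the obstacle you flag in Step~3 is a genuine one that the paper leaves implicit: Jackson's theorem supplies an $L_\infty$-optimal trigonometric polynomial whose Fourier coefficients are not a priori $l_1$-bounded, and without that bound the approximant $g_{V,0}$ is not known to lie in $\mathcal{L}(c_\alpha)$, so Lemma~\ref{Lemma_LInfinityApproximation} cannot be invoked with a finite $B$. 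Your fix is correct and the right one: a de la Vall\'ee Poussin (or Jackson-kernel) convolution is a linear near-best approximation operator whose operator norm is bounded on $L_\infty$, giving $|f_{V,k}-f_{k0}|_\infty\lesssim E_V(f_{k0})\lesssim V^{-\alpha}$, and whose Fourier multiplier is bounded by one in absolute value, so Bernstein's theorem ensures $\sum_v|a_{v,k}|\leq c_\alpha$ is preserved. This is precisely the step the paper silently relies on the reader to fill in. The remaining steps (the entropy count of order $\sqrt{\ln V}$ per $\Theta_k$, the choice $V\asymp(T/\ln T)^{1/(4\alpha)}$, and the absorption of $V^{-2\alpha}\asymp\sqrt{(\ln T)/T}$ into $\bar{B}\sqrt{\ln(KT)/T}$ once $\bar{B}\to\infty$) all match the paper's proof.
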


\subsubsection{Neural Networks }

Suppose $f\left(x\right)=\int_{\mathbb{R}}\int_{\mathbb{R}}\varphi\left(a_{1}x+a_{0}\right)d\nu\left(a_{0},a_{1}\right)$
for $x\in\left[0,1\right]$, where $\nu$ is a signed measure of finite
variation equal to $1/2$, and $\varphi$ is as in Section \ref{Section_thresholdModel}.
Up to a scaling constant, any continuous bounded function on $\left[0,1\right]$
admits this representation (e.g., Yukich et al., 1995, Section II).
Denote such class of univariate functions by $\mathcal{H}$. Usually,
$\varphi$ is a sigmoidal function, a monotone function such that
$\lim_{x\rightarrow\infty}\varphi\left(x\right)=1$ and $\lim_{x\rightarrow-\infty}\varphi\left(x\right)=0$,
e.g., the hyperbolic tangent $\tanh$. Consider the truncated series
expansion $\sum_{v=1}^{V}a_{1v}\varphi\left(a_{2v}x-a_{3v}\right)$
for some finite $V$. Denote the set of such series expansions with
$V$ terms by 
\[
\mathcal{H}_{V}:=\left\{ f\left(x\right)=\sum_{v=1}^{V}a_{1v}\varphi\left(a_{2v}x-a_{3v}\right):\sum_{v=1}^{V}\left|a_{1v}\right|\leq1,\,a_{2v},a_{3v}\in\mathbb{R}\right\} .
\]
Let $\Theta_{k}:=\left\{ f\circ\pi_{k}:f\in\mathcal{H}_{V}\right\} $.
Suppose that $g_{0}\in\mathcal{G}\left(B_{0}\right)$ (in (\ref{EQ_GTrueSet})).
The uniform error incurred by the best approximation in $\mathcal{H}_{V}$
for $\mathcal{H}$ is $V^{-1/2}$ $P$-almost surely (Theorem 2.1
in Yukich et al., 1995). Hence, using (\ref{EQ_approximationByUnivariate}),
the sieve with $V^{-1}=O\left(T^{-1/2}\right)$ leads to an approximation
error for $g_{0}$ that is $O\left(B_{0}T^{-1/4}\right)$. By the
arguments in Section \ref{Section_Example_l1ExpansionFourierHolder}
and the fact that $\varphi$ is Holder's continuous as in Section
\ref{Section_thresholdModel}, the following is deduced. 

\begin{corollary}\label{Corollary_neuralNet}Suppose that $g_{0}\in\mathcal{G}\left(B_{0}\right)$.
Under Conditions \ref{Condition_stochasticRestrictions} and \ref{Condition_estimatorRestrictions},
for the estimator $g_{T}\in\mathcal{L}\left(\bar{B}\right)$, for
any $V\geq1$, 
\[
\left|g_{T}-g_{0}\right|_{\lambda,T}^{2}\lesssim\bar{B}\left(\frac{\ln KV}{T}\right)^{1/2}+\max\left\{ B_{0}-\bar{B},0\right\} ^{2}+V^{-1}
\]
in probability. Hence, choosing $V\asymp T^{1/2}$, for any $\bar{B}\rightarrow\infty$,
$\left|g_{T}-g_{0}\right|_{\lambda,T}^{2}\lesssim\bar{B}\left(\frac{\ln KT}{T}\right)^{1/2}$
in probability.\end{corollary}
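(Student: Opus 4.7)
The plan is to reduce the corollary to Theorem~\ref{Theorem_consistency} by identifying an atomic dictionary $\Theta=\bigcup_{k}\Theta_{k}$ whose per-coordinate entropy integral is $O(\sqrt{\ln V})$, and by controlling the approximation error $\inf_{g\in\mathcal{L}(\bar{B})}|g-g_0|_\infty$ via Lemma~\ref{Lemma_LInfinityApproximation} combined with the Yukich--Stinchcombe--White bound cited in the text.

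First I would construct a $V$-term approximation to $g_0$, mirroring the Fourier case in Section~\ref{Section_Example_l1ExpansionFourierHolder}. Writing $g_0=\sum_k b_{k0}f_{k0}$ with $f_{k0}\in\mathcal{H}$ and $\sum_k|b_{k0}|\leq B_0$, pick for each $k$ some $f_{k,V}\in\mathcal{H}_V$ with $|f_{k0}-f_{k,V}|_\infty\lesssim V^{-1/2}$ (the uniform rate from Theorem~2.1 of Yukich et al., 1995). Expanding $f_{k,V}=\sum_{v=1}^{V}a_{1v,k}\varphi(a_{2v,k}\pi_k-a_{3v,k})$ with $\sum_v|a_{1v,k}|\leq 1$ and setting $g_{0,V}:=\sum_k b_{k0}f_{k,V}$, the total $l_1$ weight of $g_{0,V}$ over individual ridge atoms is $\sum_{k,v}|b_{k0}a_{1v,k}|\leq B_0$, so $g_{0,V}\in\mathcal{L}(B_0)$. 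Applying (\ref{EQ_approximationByUnivariate}) gives $|g_0-g_{0,V}|_\infty\lesssim V^{-1/2}$, and then Lemma~\ref{Lemma_LInfinityApproximation} yields $\inf_{g\in\mathcal{L}(\bar{B})}|g-g_0|_\infty\lesssim\max\{B_0-\bar{B},0\}+V^{-1/2}$, producing the $\max\{B_0-\bar{B},0\}^{2}+V^{-1}$ summands upon squaring.

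For the dictionary itself, take $\Theta_k:=\{\varphi(a\pi_k-b):(a,b)\in\mathcal{R}_V\}$ with $\mathcal{R}_V\subset\mathbb{R}^2$ a box of diameter polynomial in $V$. Since $\varphi$ is Holder continuous (Section~\ref{Section_thresholdModel}), a Lipschitz-in-parameter estimate, of the kind used in the proof of Theorem~2.7.11 of van der Vaart and Wellner (2000), gives $\ln N(\epsilon,\Theta_k)\lesssim\ln V+(1/\alpha)\ln(1/\epsilon)$, whence $\int_0^1\sqrt{\ln(1+N(\epsilon,\Theta_k))}\,d\epsilon\lesssim\sqrt{\ln V}$. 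Plugging this entropy integral together with the approximation estimate into (\ref{EQ_r_T}) gives
\[
|g_T-g_0|_{\lambda,T}^{2}\lesssim\bar{B}\cdot\frac{\sqrt{\ln K}+\sqrt{\ln V}}{\sqrt{T}}+\max\{B_0-\bar{B},0\}^{2}+V^{-1}\lesssim\bar{B}\sqrt{\frac{\ln(KV)}{T}}+\max\{B_0-\bar{B},0\}^{2}+V^{-1},
\]
which is the first bound. The second bound follows by setting $V\asymp T^{1/2}$: since $\bar{B}\to\infty$, we have $V^{-1}=T^{-1/2}\lesssim\bar{B}\sqrt{\ln(KT)/T}$, and $\max\{B_0-\bar{B},0\}$ vanishes for large $T$.

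The main obstacle is the joint calibration of the bounded parameter range $\mathcal{R}_V$ used to obtain a finite and slowly growing entropy integral, with Yukich's approximation bound, whose standard statement places no restriction on $(a_{2v},a_{3v})$. One has to verify that the $V^{-1/2}$ uniform approximation survives after truncating these parameters to a polynomial-in-$V$ box; this uses Holder continuity of $\varphi$ to absorb the tails at negligible cost and is what pairs the entropy estimate $\sqrt{\ln V}$ with the approximation rate $V^{-1/2}$.
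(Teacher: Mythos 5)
Your proposal follows the paper's intended route (the paper's own proof of this corollary is a one-liner pointing back to the argument used for Corollary~\ref{Corollary_trigonometricPoly}: approximation error via Lemma~\ref{Lemma_LInfinityApproximation} and (\ref{EQ_approximationByUnivariate}), entropy integral $\lesssim\sqrt{\ln V}$, then plug into (\ref{EQ_r_T})). However you have done something the paper does not do explicitly, and in doing so you have exposed a genuine gap. The paper's literal choice $\Theta_{k}:=\{f\circ\pi_{k}:f\in\mathcal{H}_{V}\}$ makes $\Theta_k$ a class of $V$-term sums of ridge atoms; a direct bracketing count of that class has entropy of order $\sqrt{V\ln V}$, not $\sqrt{\ln V}$, and plugging that into (\ref{EQ_r_T}) would yield a strictly worse rate than the one claimed. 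Your re-parametrisation at the atom level, $\Theta_{k}:=\{\varphi(a\pi_{k}-b):(a,b)\in\mathcal{R}_{V}\}$ with the $l_{1}$ constraint absorbed into the outer weights $b_{k}a_{1v,k}$, is the reading under which (\ref{EQ_r_T}) actually produces $\sqrt{\ln K}+\sqrt{\ln V}$, and is what Section~\ref{Section_Example_l1ExpansionFourierHolder} implicitly presupposes. You have also correctly flagged the second latent issue: the Yukich--Stinchcombe--White $V^{-1/2}$ bound is stated for unrestricted inner parameters, so one must verify that truncating $(a_{2v},a_{3v})$ to a polynomial-in-$V$ box (which is needed for $\int_0^1\sqrt{\ln(1+N(\epsilon,\Theta_k))}\,d\epsilon\lesssim\sqrt{\ln V}$ rather than a constant, and to make the class well-defined at all) does not degrade the approximation rate. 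For a sigmoidal $\varphi$ on $[0,1]$ this works because large $|a|$ makes $\varphi(ax-b)$ nearly a step function, so truncating at $|a|\lesssim V^{1/(2\alpha)}$ costs $O(V^{-1/2})$, but the paper never says so. In short: your argument is the correct detailed version of the paper's compressed proof, and the two subtleties you name (atom-level dictionary, parameter truncation) are real and are not addressed in the paper.
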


\subsubsection{Shape Constrained Estimator: Many Monotone Lipschitz Functions\label{Section_MonotoneBernstein}}

Consider estimation under monotone function constraints. Suppose $\mathcal{H}$
is the class of monotone increasing Lipschitz functions with domain
$\left[0,1\right]$ and bounded by one. Let the Lipschitz constant
be known and equal to $L$. Let $\mathcal{H}_{V}$ be the class of
univariate Bernstein polynomials of order $V$. Recall that $f_{V}$
is a Bernstein polynomial of order $V$ if $f_{V}\left(x\right)=\sum_{v=0}^{V}\binom{V}{v}a_{v}x^{v}\left(1-x\right)^{V-v}$,
$x\in\left[0,1\right]$, for any real $a_{v}$. If $a_{v}\geq a_{v-1}$
for all $v$'s, the polynomial is monotonically increasing. If also
$a_{v}-a_{v-1}\leq\alpha/V$ for all $v$'s, it is Lipschitz with
constant $\alpha$ (e.g., Lorentz, 1986, Ch.1.4). Hence, under these
constraints on the coefficients of the polynomial, $\mathcal{H}_{V}$
is a subset of functions with Lipschitz constant bounded by $\alpha$.
Moreover, for each $f\in\mathcal{H}$ there is an $f_{V}\in\mathcal{H}_{V}$
such that $\left|f_{V}-f\right|_{\infty}\lesssim\alpha V^{-1/2}$
(e.g., Lorentz, 1986, Theorem 1.6.1). Let $\Theta_{k}:=\left\{ f\circ\pi_{k}:f\in\mathcal{H}_{V}\right\} $.
Estimation of monotone functions with known Lipschitz constraint can
be conveniently performed by Bernstein polynomials, using the algorithm
in Section \ref{Section_EstimationDetails}. The estimation problem
becomes a linear programming problem at each step. To see this, define
$q_{v}\left(x\right):=\binom{V}{v}x^{v}\left(1-x\right)^{V-v}$. In
particular, $D_{T}\left(g,\theta\right)$ in Section \ref{Section_EstimationDetails}
is linear in $\theta$. Hence, maximization of $D_{T}\left(F_{j-1},\theta\right)$
w.r.t. $\theta\in\Theta_{k}$ is equivalent to 
\[
\max_{\left\{ a_{v}:v\leq V\right\} }\sum_{v=0}^{V}a_{v}\left[\int_{0}^{T}q_{v}\left(X_{k}\left(t\right)\right)dN\left(t\right)-\int_{0}^{T}q_{v}\left(X_{k}\left(t\right)\right)\exp\left\{ g\left(X\left(t\right)\right)\right\} dt\right]
\]
such that $0\leq a_{v-1}\leq a_{v}\leq1$, and $a_{v}-a_{v-1}\leq\alpha/V$,
$v=1,2,...,V$. This is routinely solved by the simplex method for
each $k$. Further, details concerning the estimation procedure can
be deduced along these lines. From Corollary 2.7.2 in van der Vaart
and Wellner (2000) deduce that the entropy integral for functions
in $\mathcal{H}_{V}$ is a constant multiple of $\alpha^{1/2}$. The
following uses this observation when applying Theorem \ref{Theorem_consistency}.

\begin{corollary}\label{Corollary_monotoneBernstein}Let $g_{0}\in\mathcal{G}\left(B_{0}\right)$.
Under Conditions \ref{Condition_stochasticRestrictions} and \ref{Condition_estimatorRestrictions},
for $g_{T}\in\mathcal{L}\left(\bar{B}\right),$ $V\gtrsim\alpha^{3/2}\times T^{1/2}$,
and $\bar{B}\rightarrow\infty$, $\left|g_{T}-g_{0}\right|_{\lambda,T}^{2}\lesssim\bar{B}\left(\frac{\alpha+\ln K}{T}\right)^{1/2}$,
in probability. \end{corollary}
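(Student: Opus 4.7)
The plan is to invoke Theorem \ref{Theorem_consistency} after supplying the two ingredients specific to this setup: the entropy integral for $\Theta_k$ (already shown to be a constant multiple of $\alpha^{1/2}$), and the uniform approximation error $\inf_{g \in \mathcal{L}(\bar{B})} |g - g_0|_\infty$, which will come from combining the Bernstein bound $|f_V - f|_\infty \lesssim \alpha V^{-1/2}$ with the general approximation inequality (\ref{EQ_approximationByUnivariate}) and Lemma \ref{Lemma_LInfinityApproximation}.

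For the approximation error, I would proceed as in the earlier examples of the section: since $g_0 \in \mathcal{G}(B_0)$, write $g_0 = \sum_{k=1}^K b_{k0}\, f_k \circ \pi_k$ with $f_k \in \mathcal{H}$ and $\sum_k |b_{k0}| \leq B_0$; take the Bernstein approximant $f_{V,k} \in \mathcal{H}_V$ of each $f_k$, yielding $g_V := \sum_k b_{k0}\, f_{V,k} \circ \pi_k \in \mathcal{L}(B_0)$ with $|g_V - g_0|_\infty \lesssim B_0 \alpha V^{-1/2}$ by (\ref{EQ_approximationByUnivariate}). Since $\bar{B} \to \infty$, eventually $\bar{B} \geq B_0$, and Lemma \ref{Lemma_LInfinityApproximation} applied with $B = B_0$ then gives $\inf_{g \in \mathcal{L}(\bar{B})} |g_0 - g|_\infty \lesssim \alpha V^{-1/2}$ (absorbing the constant $B_0$).

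Substituting both ingredients into (\ref{EQ_r_T}), the first term in the minimum is of order $\bar{B}^{-1} T^{1/2} / \sqrt{\alpha + \ln K}$ (using $\sqrt{\alpha} + \sqrt{\ln K} \asymp \sqrt{\alpha + \ln K}$), while the reciprocal squared approximation error is of order $V/\alpha^2$. The prescribed $V \gtrsim \alpha^{3/2} T^{1/2}$ makes the latter at least $\alpha^{-1/2} T^{1/2}$, which (using $\sqrt{\alpha + \ln K} \geq \sqrt{\alpha}$ and $\bar{B} \geq 1$) dominates the first term. Hence the first term governs $r_T^2$, and Theorem \ref{Theorem_consistency} delivers $|g_T - g_0|_{\lambda,T}^2 = O_p(r_T^{-2})$ with $r_T^{-2} \asymp \bar{B}\sqrt{(\alpha + \ln K)/T}$, matching the stated bound.

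There is no serious obstacle: once the entropy integral and the approximation error are in hand, the argument is a mechanical substitution into Theorem \ref{Theorem_consistency}. The only check worth mentioning is that $r_T$ be nondecreasing, which reduces to $\bar{B}^2(\alpha + \ln K)/T \to 0$, an implicit requirement for the bound to be informative as noted in the preamble to Section \ref{Section_examplesApplications}.
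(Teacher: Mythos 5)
Your proof is correct and follows essentially the same route as the paper's: obtain the per-coordinate Bernstein approximation rate $\alpha V^{-1/2}$, lift it to the additive setting via (\ref{EQ_approximationByUnivariate}), invoke Lemma \ref{Lemma_LInfinityApproximation} (with $\bar{B}\to\infty$ killing the $\max\{B_0-\bar{B},0\}$ term), note the entropy integral is of order $\alpha^{1/2}$, and substitute into (\ref{EQ_r_T}). You simply spell out the mechanical comparison of the two terms in the minimum, which the paper leaves implicit.
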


If the Lipschitz constant is not known, we can let $\alpha\rightarrow\infty$
in the estimation. In this case, the entropy integral is finite, but
not bounded. 

\subsection{Choice of $\bar{B}$\label{Section_chooseB}}

Given the relation with $l_{1}$ penalization (see (\ref{EQ_lassoPenalised})),
the model degrees of freedom can be approximated by the resulting
number of active variables (e.g., Bradic et al., 2011). Hence, the
value $\bar{B}$ can chosen by maximizing the Akaike's penalized likelihood
(AIC): $AIC_{T}\left(B\right):=\sup_{g\in\mathcal{L}\left(B\right)}L_{T}\left(g\right)-K_{B}$
where $K_{B}$ is the number of nonzero parameters in $g_{T}=\arg\sup_{g\in\mathcal{L}\left(B\right)}$.
This is less computationally intensive than cross-validation. (In
a time series context, cross-validation requires some care except
for a special few cases; e.g., Burman et al., 1994). 

For very large sample size, AIC will select models that are very large.
In this case cross-validation with a large validation sample (i.e.,
leaving out a large proportion of the data) tends to select smaller
models. Hence, the method to be used depends on the context. See Sections
\ref{Section_ComputationDetailsEmpirical} and \ref{Section_simulations}
for further discussion and applications. Finally, note that to speed
up the calculations for the choice of $\bar{B}$, the algorithm in
Section \ref{Section_EstimationDetails} can be used without line
search.

\subsection{Model Fit and Out of Sample Evaluation\label{Section_outOfSampleEvaluation}}

Model adequacy can be carried out in large samples using the log-likelihood
evaluated out of sample. The out of sample log-likelihood ratio for
two competing models $g_{t},g_{t}'\in\mathcal{L}\left(\bar{B}\right)$
which are predictable at time $t$ is 
\[
L_{S}\left(g,g'\right)=\int_{0}^{S}\left[g_{t}\left(X\left(t\right)\right)-g_{t}'\left(X\left(t\right)\right)\right]dN\left(t\right)-\int_{0}^{S}\left[\exp\left\{ g_{t}\left(X\left(t\right)\right)\right\} -\exp\left\{ g_{t}'\left(X\left(t\right)\right)\right\} \right]dt.
\]
In practice, one may split the sample and estimate $g_{t}$ and $g_{t}'$
on the first half, or every so often using past observations. The
predictable part of the log-likelihood ratio is 
\[
H_{S}\left(g,g'\right)=\int_{0}^{S}\left[g_{t}\left(X\left(t\right)\right)-g_{t}'\left(X\left(t\right)\right)\right]d\Lambda\left(t\right)-\int_{0}^{S}\left[\exp\left\{ g_{t}\left(X\left(t\right)\right)\right\} -\exp\left\{ g_{t}'\left(X\left(t\right)\right)\right\} \right]dt,
\]
where $\Lambda\left(t\right)$ is a short for $\Lambda\left(\left[0,t\right]\right)$.
Model $g$ outperforms $g'$ if $H_{S}\left(g,g'\right)>0$. (If $g=g_{0}$,
$H_{S}\left(g,g'\right)\geq0$, with equality only if $g'=g_{0}$,
see Lemma \ref{Lemma_predictableLR} in the supplementary material.)
The following null hypothesis can be tested: $H_{S}\left(g,g'\right)=0$
against a one or two sided alternative. Under the null, 
\[
L_{S}\left(g,g'\right)=\int_{0}^{S}\left[g_{t}\left(X\left(t\right)\right)-g_{t}'\left(X\left(t\right)\right)\right]d\left(N\left(t\right)-\Lambda\left(t\right)\right).
\]
The following martingale result is the justification for the testing
procedure. 

\begin{proposition}\label{Proposition_outOfSampleTest}Suppose that
$g_{t}$ and $g_{t}'$ are predictable bounded processes and $H_{S}\left(g,g'\right)=0$.
Suppose that as $S\rightarrow\infty$ 
\[
\frac{1}{S}\int_{0}^{S}\left[g_{t}\left(X\left(t\right)\right)-g_{t}'\left(X\left(t\right)\right)\right]^{2}d\Lambda\left(t\right)\rightarrow\sigma^{2}>0
\]
in probability. Let $\hat{\sigma}_{S}^{2}:=\frac{1}{S}\int_{0}^{S}\left[g_{t}\left(X\left(t\right)\right)-g_{t}'\left(X\left(t\right)\right)\right]^{2}dN\left(t\right)$.
Then, $L_{S}\left(g,g'\right)/\sqrt{S\hat{\sigma}_{S}^{2}}$ converges
in distribution to a standard normal random variable.\end{proposition}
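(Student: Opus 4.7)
The plan is to recognize $L_S(g,g')$ under the null as a stochastic integral against the compensated counting process $N-\Lambda$, apply a martingale central limit theorem, and then close the argument with Slutsky's lemma after showing that $\hat\sigma_S^2$ is consistent for $\sigma^2$.

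First, under $H_S(g,g')=0$ the display immediately preceding the proposition gives
\[
L_S(g,g') \;=\; \int_0^S \bigl[g_t(X(t))-g_t'(X(t))\bigr]\,d(N(t)-\Lambda(t)).
\]
Define $M_t := \int_0^t [g_s-g_s']\,d(N-\Lambda)$. Because $g_t-g_t'$ is predictable and bounded and $N-\Lambda$ is the compensated counting process, $M$ is a locally square integrable martingale with predictable quadratic variation
\[
\langle M\rangle_t \;=\; \int_0^t \bigl[g_s(X(s))-g_s'(X(s))\bigr]^2 d\Lambda(s),
\]
and by assumption $\langle M\rangle_S/S \to \sigma^2>0$ in probability.

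Second, I would apply Rebolledo's martingale CLT to $M_S/\sqrt{S}$. The only nontrivial hypothesis is the Lindeberg condition: for every $\varepsilon>0$,
\[
\frac{1}{S}\int_0^S \bigl[g_s-g_s'\bigr]^2 \mathbf{1}\!\left\{|g_s-g_s'|>\varepsilon\sqrt{S}\right\} d\Lambda(s) \;\xrightarrow{p}\; 0.
\]
Since $g_t, g_t'$ are uniformly bounded (say by a constant $C$), the indicator is identically zero once $S>(2C/\varepsilon)^2$, so this holds trivially. Hence $M_S/\sqrt{S} \Rightarrow \mathcal{N}(0,\sigma^2)$.

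Third, I would show $\hat\sigma_S^2 \xrightarrow{p} \sigma^2$ by the decomposition
\[
\hat\sigma_S^2 \;=\; \frac{1}{S}\int_0^S [g_s-g_s']^2\,d\Lambda(s) \;+\; \frac{1}{S}\int_0^S [g_s-g_s']^2\,d(N-\Lambda)(s).
\]
The first term tends to $\sigma^2$ by hypothesis. The second term is a martingale divided by $S$; its second moment equals $S^{-2}\mathbb{E}\int_0^S [g_s-g_s']^4 d\Lambda(s) \le (2C)^4 S^{-2}\,\mathbb{E}\Lambda(S)$. Since $\lambda=\exp\{g_0(X(t))\}\le e^{\bar g_0}$ by Condition \ref{Condition_parameterSpaceRestrictions}, $\mathbb{E}\Lambda(S)\le e^{\bar g_0}S$, so this second moment is $O(1/S)$ and the term vanishes in probability by Chebyshev. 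Slutsky's lemma then yields
\[
\frac{L_S(g,g')}{\sqrt{S\hat\sigma_S^2}} \;=\; \frac{M_S/\sqrt{S}}{\sqrt{\hat\sigma_S^2}} \;\Rightarrow\; \frac{\mathcal{N}(0,\sigma^2)}{\sigma} \;=\; \mathcal{N}(0,1),
\]
which is the claim. The only mildly delicate point in this plan is verifying Lindeberg, and that is rendered trivial by the uniform boundedness hypothesis on $g$ and $g'$; everything else is standard once the identification of $L_S$ with the stochastic integral $M_S$ is in place.
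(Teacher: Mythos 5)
Your proof is correct, and it takes a genuinely different route from the paper's. The paper partitions $[0,S]$ into unit blocks, sets $Y_s:=\int_{s-1}^{s}h_t\,dM(t)$, and invokes McLeish's (1974) discrete-time martingale CLT; this requires verifying a Lindeberg-type max condition via a fourth-moment bound on the increments of $N$ (dominating $N$ by a homogeneous Poisson process with rate $e^{\bar g_0}$), plus an extra step to dispose of the fractional part of $S$. You instead treat $L_S$ directly as a continuous-time locally square-integrable martingale and apply Rebolledo's CLT, for which the Lindeberg condition is trivially satisfied because the jumps of $M_S/\sqrt S$ are uniformly of size $O(1/\sqrt S)$ by the boundedness of $g_t-g'_t$. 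Your approach is shorter and avoids both the discretization and the max-of-increments estimate. For the consistency of $\hat\sigma_S^2$ you give a Chebyshev/Ito-isometry argument proving convergence in probability (which is all that Slutsky needs), whereas the paper asserts almost sure convergence of the same martingale term; your argument is the more careful of the two. The one thing worth making explicit is that you, like the paper, are using $\lambda\le e^{\bar g_0}$ from Condition \ref{Condition_parameterSpaceRestrictions}, so that hypothesis should be acknowledged as in force.
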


The testing framework falls within the prequential framework of Dawid
(e.g., Seillier-Moiseiwitsch and Dawid, 1993, for applications). 

This methodology can be applied in various ways. As an example, consider
a sample of size $2T$. Use $\left[0,T\right]$ to find the estimators
$g_{T}$ and $g_{T}'$. Conduct the test on $(T,2T]$ so that, mutatis
mutandis, $S=T$ in the proposition. In this case, $g_{T}$ and $g_{T}'$
are predictable. We need to suppose that the testing sample size $S$
increases to infinity in order to apply the result. If the size $T$
of the testing sample is large, the asymptotic result is applicable.

\section{Application to Estimation and Forecasting of Trade Arrivals of New
Zealand Dollar Futures\label{Section_EmpiricalApplication}}

One motivation for the estimation method discussed here was to understand
the variables that affect the trade arrivals of the New Zealand dollar
futures, i.e. the futures on NZDUSD traded on the Chicago Mercantile
Exchange (CME). The New Zealand dollar is a liquid currency futures,
but not as much as other currency futures (Fx futures) such as the
Euro, Australian dollar and the Swiss Franc (against the dollar).
What are the variables that affect a trade arrival, say a buy trade?
Are these variables, and relations if any, stable in the sense that
one can forecast a buy trade arrival tomorrow having estimated a model
with today's data? These questions are important to the understanding
of market microstructures, and the general etiology of the Fx futures
markets and its relation to other instruments, e.g., equity markets,
commodities etc. In fact, the New Zealand dollar belongs to the commodity
Fx group that includes, for example, Australian dollar, Canadian dollar.
These are the currencies of countries whose economy relies on commodity
exports. Anecdotal evidence seems to suggest that the New Zealand
dollar tends to increase in value when risk appetite increases.

Below, the data are described and subsequently the model is estimated.

\subsection{Data and Variables Description }

The estimation of the intensity of trade arrivals is an important
problem (e.g., Hall and Hautsch, 2007). New Zealand Dollar futures
(the NZD/USD futures front month contract, whose ticker is 6N) are
traded on the Chicago Mercantile Exchange. Two days of trading between
8am to 5pm GMT are considered, in particular, 10/09/2013-11/09/2013.
The time slot is based on liquidity considerations. Data are proprietary
and were collected with high precision time stamps by a Chicago proprietary
trading firm with co-located servers in the Aurora data center in
Chicago. In consequence, trades were classified as buy or sell with
minimal probability of error. The data have nanosecond time stamps,
and trades time stamps have been adjusted to account for delays in
the CME network and reporting (these adjustments are in the order
of half a millisecond). This ensures that only information prior to
the trade is used to define covariates. Buy and sell intensities are
estimated separately. The covariates are derived using information
from 6N as well as from other contracts that are perceived as likely
to have an impact. 

Covariates are constructed from the following CME futures: NZDUSD
(6N), AUDUSD (6A), EURUSD (6E), GBPUSD (6B), CADUSD (6C), JPYUSD (6J),
CHFUSD (6S), MXNUSD (6M), Crude Oil (CL), Gold (GC) and mini S\&P500
(ES). For each instrument, covariates were derived from order book
and trade updates. In particular, the variables are mid-price returns,	bid-ask
spread, 	volume imbalances for the first two levels, trade imbalances
and trade duration. Variables are updated every time there is a change
in their value. For example, the return is computed when there is
a mid price change from the previous mid. Volume imbalances are computed
as the difference of the bid and ask quantities on each level (the
contracts usually quote prices for 5 levels). These differences are
then standardized by the sum on the bid and ask quantity on that level.
Trade imbalances are the signed traded size, positive if a buy and
negative if a sell. Excluding the spread, moving averages of all variables
are also computed. In particular, moving averages of order 1, 2, 4,
8, 16, 32, 64, 128 are used. This is to allow information at slightly
different frequencies to affect the intensity in a way similar to
MIDAS. Overall, the total number of variables is 508 including a constant.
A model that also allows for squares and third powers of all the standardized
variables is also estimated. In this case, the total number of variables
is 1522 including a constant. Once the feature variables are computed,
in order to reduce the computational burden, these are sampled only
when there is an update in the NZDUSD futures. The argument is that
if an instrument leads 6N, then the book for 6N would update before
a trade. 

\subsection{Computational Details\label{Section_ComputationDetailsEmpirical}}

The two-day sample is split into three parts. The first half of day
one is the estimation sample. The second half of day one is the validation
sample. The second day is the testing sample. The variables are windsorized
at the 95\% quantile and then standardized by it so as to take values
in $\left[-1,1\right]$. For estimation of the cubic polynomial, powers
of the variables are computed after having mapped the variables into
$\left[-1,1\right]$. The quantile is computed using the data from
day one only. Hence windsorization on the testing sample is based
on the previous day 95\% quantile. After windsorization, the set of
weights $\mathcal{W}$ is chosen equal to the sample estimator of
the $L_{2}$ norm, i.e. $w_{\theta}=\left(\frac{1}{T}\int_{0}^{T}\theta^{2}\left(X\left(t\right)\right)dt\right)^{1/2}$
over day one. This ensures that all variables are given the same importance.
The model is estimated for $B\in\left\{ 2,4,8,16\right\} $ on the
estimation sample. We set $\bar{B}$ equal to the $B$ that maximizes
the likelihood on the validation sample. This method is preferred
to AIC as here the sample size is very large. With this choice of
$\bar{B}$, the model is then re-estimated using the data in the first
day, i.e. both estimation and validation sample. This approach is
feasible in a large sample and avoids some of the drawbacks of cross-validation
for dependent observations.

\subsection{Estimation Results\label{Section_empiricalEstimation}}

It is difficult to clearly and concisely report the variables that
appear to be most important for the intensity. In fact, though with
some small coefficients, a large number of variables are included
by the method described here. For the linear model, the chosen $\bar{B}$
results in a model for buy and sell trades with 77 and 68 covariates,
respectively. For the cubic case, the number was slightly larger.
Including many variables with relatively small coefficients produces
an averaging effect across many variables and can provide a hedge
against instability and noise, in a way similar to forecast combination.

The intersection of the first ten variables in the linear model for
buy and sell trades is reported in Table \ref{Table_empiricallyImportantVariables}.
These variables can be seen as some form of more stable subset of
variables (Meinshausen and Bühlmann, 2010, for formal methods on stability
selection). 

\begin{longtable}{clc}
\caption{ Most important variables affecting buy and sell trade arrival in
linear model.}
\label{Table_empiricallyImportantVariables}\tabularnewline
\cline{1-2} 
\endfirsthead
Instrument & Variable & \tabularnewline
\cline{1-2} 
6N & Volume Imbalance on Level 1 & \tabularnewline
6N & Volume Imbalance on Level 2 & \tabularnewline
6N & Spread & \tabularnewline
6A & Duration from Last Trade & \tabularnewline
 &  & \tabularnewline
\end{longtable}

Interestingly, past durations of 6N (the New Zealand Dollar futures)
do not seem to be as important sot they are not in Table \ref{Table_empiricallyImportantVariables}.
However, the durations of 6A (the Australian Dollar) appear to be
important. The Australian Dollar tends to correlate with the New Zealand
Dollar, but it is more liquid. Hence, it might provide useful information
on trade arrival. Past durations have been found to be important predictors
in some high frequency financial applications (e.g., Engle and Russell,
1998). However, book information seems to have greater impact. In
the next section, a linear model using only the variables in Table
\ref{Table_empiricallyImportantVariables} will also be used for comparison
and will be referred to as the restricted linear model.

\subsubsection{Out of Sample Performance}

Having estimated the model on the first day, it is of interest to
see if the model can be used to explain a trade arrival out of sample.
This is done computing the average log-likelihood ratio $L_{S}\left(g,g'\right)/S$,
and $\hat{\sigma}_{S}/\sqrt{S}$ on the second day (see Proposition
\ref{Proposition_outOfSampleTest}). Confidence intervals can then
be constructed using Proposition \ref{Proposition_outOfSampleTest}.
The goal is to assess the out of sample performance of the linear
and cubic model as well as the restricted model (the one with variables
in Table \ref{Table_empiricallyImportantVariables}). It is of interest
to verify if restricting attention to a linear model might produce
similar out of sample results. When comparing to the constant intensity
(Conts.), the constant is computed as the out of sample maximum likelihood
estimator, i.e., the best constant intensity with hindsight.

Table \ref{Table_empiricalResults} shows that all the models do improve
on the constant intensity with overwhelming evidence. When looking
at the relative merits of the unrestricted models, it becomes unclear
whether a cubic model adds value out of sample. Looking at the restricted
linear model relative to the unrestricted linear one, there is overwhelming
evidence that the unrestricted model should be preferable. It is interesting
that when comparing the restricted models, there is overwhelming evidence
that a cubic model does improve on the linear one. From these results
one could infer that modelling nonlinearities does pay off when looking
at small dimensional models. However, when models are linear, but
with many covariates, nonlinear impact of book and trade variables
is less obvious. The simulation results of Section \ref{Section_simulations}
support this claim. 

\begin{longtable}{lccccc}
\caption{Out of sample performance of models: $g$ vs. $g'$ with $g$ and
$g'$ as defined in the headings below.}
\label{Table_empiricalResults}\tabularnewline
\cline{1-5} 
\endfirsthead
 & \multicolumn{2}{l}{Lin. vs. Const.} & \multicolumn{2}{l}{Cubic vs. Const. } & \tabularnewline
\cline{1-5} 
 & Buy & Sell & Buy  & Sell & \tabularnewline
Avg.Log-LR.$\times10^{2}$ & 3.77 & 4.48 & 4.02 & 4.56 & \tabularnewline
S.E.$\times10^{2}$ & 0.33 & 0.25 & 0.31 & 0.26 & \tabularnewline
P-Val. & <0.01 & <0.01 & <0.01 & <0.01 & \tabularnewline
\cline{1-5} 
 & \multicolumn{2}{l}{Cubic vs. Linear} & \multicolumn{2}{c}{} & \tabularnewline
\cline{1-5} 
 & Buy  & Sell &  &  & \tabularnewline
Avg.Log-LR.$\times10^{2}$ & 0.25 & 0.08 &  &  & \tabularnewline
S.E.$\times10^{2}$ & 0.08 & 0.07 &  &  & \tabularnewline
P-Val. & <0.01 & 0.22 &  &  & \tabularnewline
\cline{1-5} 
 & \multicolumn{2}{l}{Lin. Restr. vs. Const. } & \multicolumn{2}{l}{Lin. Restr. vs. Lin.} & \tabularnewline
\cline{1-5} 
 & Buy  & Sell & Buy  & Sell & \tabularnewline
Avg.Log-LR.$\times10^{2}$ & 1.14 & 1.24 & -2.63 & -3.25 & \tabularnewline
S.E.$\times10^{2}$ & 0.15 & 0.14 & 0.20 & 0.19 & \tabularnewline
P-Val. & <0.01 & <0.01 & <0.01 & <0.01 & \tabularnewline
\cline{1-5} 
 & \multicolumn{2}{l}{Cubic Restr. vs. Lin. Restr.} &  &  & \tabularnewline
\cline{1-5} 
 & Buy  & Sell &  &  & \tabularnewline
Avg.Log-LR.$\times10^{2}$ & 0.26 & 0.25 &  &  & \tabularnewline
S.E.$\times10^{2}$ & 0.10 & 0.06 &  &  & \tabularnewline
P-Val. & <0.01 & <0.01 &  &  & \tabularnewline
\end{longtable}

\section{Numerical Examples\label{Section_simulations} }

As remarked in Section \ref{Section_likelihood}, $\left\{ \Lambda\left((T_{i-1},T_{i}]\right):i\in\mathbb{N}\right\} $
($\Lambda$ as in (\ref{EQ_intensityRepresentation})) is i.i.d. exponentially
distributed with mean 1. For simplicity, in the simulations, it is
assumed that the covariates only update at the jump times $T_{i}$'s.
Hence, the intervals $(T_{i-1},T_{i}]$ are simulated from an exponential
distribution with parameter $\exp\left\{ g\left(X\left(T_{i-1}\right)\right)\right\} $,
i.e., with mean $\exp\left\{ -g\left(X\left(T_{i-1}\right)\right)\right\} $.
The covariates are standard Gaussian random variables with Toeplitz
covariance $Cov\left(X_{k}\left(t\right),X_{l}\left(t\right)\right)=\rho^{\left|k-l\right|}$
and uncorrelated over time. The variables have been capped to $2$
in absolute value, i.e., they take values in $\left[-2,2\right]$. 

The parameters in the simulation are $K\in\left\{ 10,50\right\} $
number of covariates, $T=T_{100}$ (recall $N\left(T_{n}\right)=n$)
sample size, and $\rho\in\left\{ 0,0.75\right\} $. Different choices
of $g_{0}$, and $\Theta$ are considered. These are summarized as
follows. For estimation simplicity, $\Theta$ is a finite set of functions. 

\subsection{True Unknown Model $g_{0}$\label{Section_trueModelSimulation}}

Here we describe various options for the true function $g_{0}$. The
true function $g_{0}$ takes the form $g_{0}\left(x\right)=\sum_{k=1}^{K}g_{0}^{\left(k\right)}\left(x\right)$,
where the functions $g_{0}^{\left(k\right)}$ are defined as follows. 

\paragraph{True additive functions. }

Linear: $g_{0}^{\left(k\right)}\left(x\right)=b_{0k}x_{k}$; NonLinear:
$g_{0}^{\left(k\right)}\left(x\right)=b_{0k}\left(\left|x_{k}\right|+0.5x_{k}\right)$.

\paragraph{Active variables. }

FewLarge $b_{0k}=1$ for $k=1,2,3$, $b_{0k}=0$ for $k>3$; ManySmall
$b_{0k}=1/\sqrt{10}$ for $k\leq10$, $b_{0k}=0$ for $k>10$. Even
when there is no model misspecification, these values are unknown
to the researcher. 

\subsection{Estimator in $\mathcal{L}\left(B,\Theta,\mathcal{W}\right)$}

Here we define the parameter space $\mathcal{L}\left(B,\Theta,\mathcal{W}\right)$
used by the researcher. Estimation is carried out allowing for model
misspecification. Hence, depending on the design, the choice of functions
does not need to correspond to the true functions $g_{0}^{\left(k\right)}$
(Section \ref{Section_trueModelSimulation}). The estimated models
are of the form $g\left(x\right)=\sum_{k=1}^{K}\sum_{\theta\in\Theta_{k}}b_{\theta}\theta\left(x\right)$.
Details regarding $\Theta_{k}$ and the estimation of the $b_{\theta}$'s
are as follows. 

\paragraph{Functions in $\Theta$.}

Linear (Lin): $\theta\left(x\right)=x_{k}$ for $\theta\in\Theta_{k}$;
Monomials (Poly): $\theta\left(x\right)=\left(x_{k}/2\right)^{a}$
for $\theta\in\Theta_{k}$ with $a=1,2,3$. A constant is added by
default in the estimations. When the true function is linear (i.e.,
$g_{0}^{\left(k\right)}\left(x\right)=b_{0k}x_{k}$) there is no misspecification
error. However, the coefficients still need to be estimated, many
of which can be zero. When the true function is nonlinear, misspecification
error will be incurred even when estimation is carried out using a
polynomial (Poly). However, in this case, the degree of misspecification
will be small. 

\paragraph{Choice of $\bar{B}$ and $\mathcal{W}$ }

The parameter $\bar{B}$ is chosen as the $B\in\left\{ 1,4,8,16\right\} $
that maximizes $AIC_{T}$ as defined in Section \ref{Section_chooseB}.
In this case, the sample size is relatively small and the performance
of $AIC_{T}$ and cross-validation (leaving out many variables) was
similar. Hence, $AIC_{T}$ is preferred for computational convenience.
We applied the algorithm in Section \ref{Section_EstimationDetails}
with $F_{0}=\ln\left(N\left(T\right)/T\right)$ rather than $F_{0}=0$.
In this case, $e^{F_{0}}$ is an estimator of $P\lambda$, the expected
intensity. The main reason was to reduce fine tuning of the set of
possible values of $B$ to the different functions and simulation
designs. The simulation design is such that as the number of active
variables increases, $P\lambda$ increases and in consequence $B$. 

The weights in $\mathcal{W}$ are chosen to be the sample $L_{2}$
norm as in Section \ref{Section_ComputationDetailsEmpirical}. Note
that no winsorization is applied to the variables, as they are already
bounded.

\subsection{Simulation Results \label{Section_simulationResultsIID}}

The following loss function is considered to assess the model fit,

\begin{equation}
Loss\left(g\right):=\frac{\int_{T}^{T+S}\left[g_{0}\left(X\left(t\right)\right)-g\left(X\left(t\right)\right)\right]^{2}dN\left(t\right)}{\int_{T}^{T+S}\left[g_{0}\left(X\left(t\right)\right)-\gamma_{0}\right]^{2}dN\left(t\right)}\label{EQ_LossSimulation}
\end{equation}
where $\gamma_{0}:=\frac{\int_{T}^{T+S}g_{0}\left(X\left(t\right)\right)dN\left(t\right)}{N\left(T+S\right)-N\left(T\right)}$.
This loss function is justified noting that when $S$ is large, $Loss\left(g\right)\simeq\left|g_{0}-g\right|_{\lambda}^{2}/\left[\inf_{\gamma>0}\left|g_{0}-\gamma\right|_{\lambda}^{2}\right]$.
Hence, the numerator in $Loss$ is an approximation to the convergence
criterion of Theorem \ref{Theorem_consistency}, while the denominator
is the error incurred by $\gamma_{0}$, the best constant approximation
with hindsight. The standardization ensures that $Loss\left(g\right)\in[0,1)$
if $g$ improves over $\gamma_{0}$, if not $Loss\left(g\right)\geq1$.
The denominator in $Loss$ is the benchmark for the finite sample
experiment carried out here. In the simulations, data are generated
for a sample period $\left[0,T_{1100}\right]$, and the model is estimated
on $\left[0,T_{100}\right]$ and out of sample performance is evaluated
on $\left[T_{100},T_{1100}\right]$. Hence, in $Loss$, $T=T_{100}$
and $S=T_{1100}-T_{100}$. Table 3 reports the median of $Loss\left(g_{T_{100}}\right)$
(LOSS) together with the 75\% and 25\% quantile. 

Overall, different choices of true model (linear or convex) and basis
functions allow us to gauge the main features of the estimator. The
results in Table \ref{Table_simulationResults} can be summarized
as follows. There is a clear advantage in using a nonlinear model
when the true model is nonlinear, but also a considerable loss (mostly
due to estimation error) when the true model is linear. For nonlinear
estimators such as polynomials, a judicious choice of $\mathcal{W}$
to dump the effect of higher order coefficients can make the estimator
more robust. The present choice of $\mathcal{W}$ is equivalent to
standardizing the variables by their $L_{2}$ norm. This is simple,
but might lead to big oscillations if the order of polynomial is not
as small as it is here. Choice of $\mathcal{W}$ is an important part
of the modelling and estimation procedure when dealing with polynomials.
An increase in variables correlation produces better forecasts. This
is in contrast with the problem of variable screening. Numerical experiments
of the author - not reported here - as well as related results in
the literature (e.g., Bradic et al., 2011) show that, in this context,
false discovery of active variables increases substantially with correlation.
This is natural, as correlation confounds the merits of each single
variable. The forecasting and variable screening are related, but
complementary problems, which require a separate treatment. 

\begin{longtable}{cccccccccc}
\caption{Simulation results relative to the best constant intensity with hindsight.
Estimation is based on samples of size $T_{100}$ corresponding to
$N\left(T_{100}\right)=100$ number of jumps. The table reports the
median (Med.) and the 25 (Q25\%) and 75 (Q75\%) per cent quantiles
of $Loss\times100$ ($Loss$ as in (\ref{EQ_LossSimulation})). A
number below 100 means a relative improvement on the best constant
intensity with hindsight.}
\label{Table_simulationResults}\tabularnewline
\endfirsthead
 & \multicolumn{3}{c}{Loss$\times$100} &  &  & \multicolumn{3}{c}{Loss$\times$100} & \tabularnewline
 & Med. & Q25\% & Q75\% &  &  & Med. & Q25\% & Q75\% & \tabularnewline
 & \multicolumn{3}{c}{$\rho=0$} &  &  & \multicolumn{3}{c}{$\rho=0.75$} & \tabularnewline
\cline{1-9} 
 & \multicolumn{8}{c}{$g_{0}$ is Linear FewLarge $K=10$} & \tabularnewline
\cline{1-9} 
Lin & 3.70 & 2.37 & 5.72 &  &  & 1.69 & 1.11 & 2.61 & \tabularnewline
Poly & 5.54 & 3.60 & 8.10 &  &  & 2.76 & 1.85 & 4.19 & \tabularnewline
\cline{1-9} 
 & \multicolumn{8}{c}{$g_{0}$ is Linear FewLarge $K=50$} & \tabularnewline
\cline{1-9} 
Lin & 6.83 & 4.92 & 9.41 &  &  & 3.64 & 2.34 & 5.26 & \tabularnewline
Poly & 10.61 & 8.01 & 13.66 &  &  & 4.30 & 2.82 & 6.65 & \tabularnewline
\cline{1-9} 
 & \multicolumn{8}{c}{$g_{0}$ is Linear ManySmall $K=10$} & \tabularnewline
\cline{1-9} 
Lin & 13.42 & 9.90 & 19.16 &  &  & 2.72 & 1.93 & 3.87 & \tabularnewline
Poly & 32.88 & 24.93 & 41.65 &  &  & 4.05 & 2.63 & 5.90 & \tabularnewline
\cline{1-9} 
 & \multicolumn{8}{c}{$g_{0}$ is Linear ManySmall $K=50$} & \tabularnewline
\cline{1-9} 
Lin & 47.02 & 35.29 & 57.26 &  &  & 4.81 & 3.42 & 6.22 & \tabularnewline
Poly & 60.83 & 51.45 & 74.57 &  &  & 6.23 & 4.64 & 8.30 & \tabularnewline
\cline{1-9} 
 & \multicolumn{8}{c}{$g_{0}$ is Convex FewLarge $K=10$} & \tabularnewline
\cline{1-9} 
Lin & 81.08 & 75.13 & 92.10 &  &  & 70.08 & 63.90 & 77.56 & \tabularnewline
Poly & 19.92 & 15.03 & 26.82 &  &  & 9.35 & 7.19 & 12.77 & \tabularnewline
\cline{1-9} 
 & \multicolumn{8}{c}{$g_{0}$ is Convex FewLarge $K=50$} & \tabularnewline
\cline{1-9} 
Lin & 110.23 & 90.67 & 123.28 &  &  & 83.53 & 72.46 & 95.66 & \tabularnewline
Poly & 35.47 & 28.08 & 45.36 &  &  & 14.70 & 11.86 & 19.36 & \tabularnewline
\cline{1-9} 
 & \multicolumn{8}{c}{$g_{0}$ is Convex ManySmall $K=10$} & \tabularnewline
\cline{1-9} 
Lin & 97.95 & 87.20 & 112.47 &  &  & 73.59 & 66.67 & 82.64 & \tabularnewline
Poly & 17.16 & 14.42 & 20.58 &  &  & 5.49 & 4.60 & 6.90 & \tabularnewline
\cline{1-9} 
 & \multicolumn{8}{c}{$g_{0}$ is Convex ManySmall $K=50$} & \tabularnewline
\cline{1-9} 
Lin & 104.12 & 94.80 & 114.59 &  &  & 67.38 & 62.55 & 74.41 & \tabularnewline
Poly & 48.24 & 40.72 & 57.95 &  &  & 10.67 & 8.86 & 13.13 & \tabularnewline
 &  &  &  &  &  &  &  &  & \tabularnewline
\end{longtable}

\subsubsection{Simulations with Dynamics: Hawkes Process with Covariates\label{Section_SimulationHawkesManyCovariates}}

The previous simulations considered time independent covariates. Here,
we make the covariates time dependent, following an autoregressive
process and also allow the intensity to follow a Hawkes process. Consider
the intensity 
\begin{equation}
\lambda\left(t\right)=\exp\left\{ \ln\left(c_{0}+\int_{\left(0,t\right)}e^{-a_{0}\left(t-s\right)}dN\left(s\right)\right)+g_{0}\left(X\left(t\right)\right)\right\} \label{EQ_hawkesExpCov}
\end{equation}
This is in the form of Section \ref{Section_hawkesProcess}, though
the function $f\left(\cdot\right)=\ln\left(c_{0}+\cdot\right)$ is
bounded below (because its domain is positive), it is not bounded
above. Here, $c_{0}>0$ is required to avoid degeneracy. To directly
apply the results in Section \ref{Section_hawkesProcess} we could
use $f\left(\cdot\right)=\max\left\{ \ln\left(c_{0}+\cdot\right),\bar{c}\right\} $
instead, for some finite $\bar{c}$, in which case the process is
assured to be stationary (see Corollary \ref{Lemma_hawkesApproximation}).
The process simplifies to 
\begin{equation}
\lambda\left(t\right)=\left(c_{0}+\int_{\left(0,t\right)}e^{-a_{0}\left(t-s\right)}dN\left(s\right)\right)\exp\left\{ g_{0}\left(X\left(t\right)\right)\right\} .\label{EQ_hawkesExpCov2}
\end{equation}
Using results for marked Hawkes processes (e.g., Bremaud et al., 2002)
one could conjecture that (\ref{EQ_hawkesExpCov2}) would be stationary
if $a_{0}>\mathbb{E}\exp\left\{ g_{0}\left(X\left(t\right)\right)\right\} $.
To the author's knowledge, formal existing results do not fit exactly
into the framework of (\ref{EQ_hawkesExpCov2}). In the simulations
we add a constant to the true model, i.e., $g_{0}\left(x\right)=\gamma+\sum_{k=1}^{K}g_{0}^{\left(k\right)}\left(x\right)$
where $\gamma=-\mathbb{E}\exp\left\{ \sum_{k=1}^{K}g_{0}^{\left(k\right)}\left(X\left(t\right)\right)\right\} $,
so that $\mathbb{E}\exp\left\{ g_{0}\left(X\left(t\right)\right)\right\} =1$.
This should ensure the aforementioned stationarity of (\ref{EQ_hawkesExpCov2})
when $a_{0}>1$. Other than that, the true models for $g_{0}$ are
as in Section \ref{Section_trueModelSimulation}. In the simulations
we verified that the term in parenthesis on the r.h.s. of (\ref{EQ_hawkesExpCov2})
remains bounded, hence ensuring stationarity with no need of a capping
constant $\bar{c}$. This model can be simulated and estimated and
details concerning this and some of the calculations to be discussed
below can be found in Section \ref{Section_additionalHawkesSimulationDetails}
of the supplementary material. 

As in the previous simulation, we let $X\left(t\right)=X\left(T_{i-1}\right)$
for $t\in(T_{i-1},T_{i}]$. However, the $X\left(T_{i}\right)$'s
now follow the vector autoregression $X\left(T_{i}\right)=0.95X\left(T_{i-1}\right)+\varepsilon_{i}$,
$X\left(T_{0}\right)=\varepsilon_{0}$, where the $K$ dimensional
innovations $\varepsilon_{i}$'s are generated as the i.i.d. truncated
Gaussian with Toeplitz covariance exactly as the i.i.d. $X\left(T_{i}\right)$'s
used in Section \ref{Section_simulationResultsIID}. If the $X\left(T_{i}\right)$'s
were independent as in the previous simulation, the dependence in
the Hawkes component would be confounded by the independent variability
in $\exp\left\{ g_{0}\left(X\left(T_{i}\right)\right)\right\} $.
Given the dependence structure, we use a larger sample size $T_{n}$
with $n=200$. In the simulations, we set $c_{0}=2$ and $a_{0}=1.3$. 

Except for these differences, the set up is the same as in the previous
simulation. However, we have $c_{0}$ and $a_{0}$ as extra parameters
to be estimated. The goal of the simulations is to see how the remarks
made in the case of time independent variables may hold in this case.
Results are reported in Table \ref{Table_simulationResultsHawkes}.
Results in Table \ref{Table_simulationResults} and Table \ref{Table_simulationResultsHawkes}
are not directly comparable, because of the scaling required for stationarity.
However, we can establish conclusions in relative terms. 

Table \ref{Table_simulationResultsHawkes} confirms the overall situation
of Table \ref{Table_simulationResults}. However, dependence makes
the problem harder, as expected. The relative benefit of estimating
a nonlinear model when the true $g_{0}$ is nonlinear decreases substantially
in the present scenario. For example, in the case of Convex ManySmall
$K=50$, the ratio of the loss for Lin and Poly in Table \ref{Table_simulationResults}
is $104.12/48.24=2.16$, while in Table \ref{Table_simulationResultsHawkes}
is $52.55/42.55=1.23$. 

\begin{longtable}{cccccccccc}
\caption{Simulation results relative to the best constant intensity with hindsight.
The model is as in (\ref{EQ_hawkesExpCov}). Estimation is based on
samples of size $T_{200}$ corresponding to $N\left(T_{200}\right)=200$
number of jumps. The table reports the median (Med.) and the 25 (Q25\%)
and 75 (Q75\%) per cent quantiles of $Loss\times100$ ($Loss$ as
in (\ref{EQ_LossSimulation})). A number below 100 means a relative
improvement on the best constant intensity with hindsight.}
\label{Table_simulationResultsHawkes}\tabularnewline
\endfirsthead
 & \multicolumn{3}{c}{Loss$\times$100} &  &  & \multicolumn{3}{c}{Loss$\times$100} & \tabularnewline
 & Med. & Q25\% & Q75\% &  &  & Med. & Q25\% & Q75\% & \tabularnewline
 & \multicolumn{3}{c}{$\rho=0$} &  &  & \multicolumn{3}{c}{$\rho=0.75$} & \tabularnewline
\cline{1-9} 
 & \multicolumn{8}{c}{$g_{0}$ is Linear FewLarge $K=10$} & \tabularnewline
\cline{1-9} 
Lin & 1.04 & 0.71 & 1.51 &  &  & 0.54 & 0.37 & 0.81 & \tabularnewline
Poly & 1.53 & 1.01 & 2.34 &  &  & 0.95 & 0.70 & 1.31 & \tabularnewline
\cline{1-9} 
 & \multicolumn{8}{c}{$g_{0}$ is Linear FewLarge $K=50$} & \tabularnewline
\cline{1-9} 
Lin & 3.53 & 1.78 & 7.78 &  &  & 2.72 & 1.96 & 3.74 & \tabularnewline
Poly & 4.76 & 2.87 & 8.29 &  &  & 4.15 & 3.26 & 5.60 & \tabularnewline
\cline{1-9} 
 & \multicolumn{8}{c}{$g_{0}$ is Linear ManySmall $K=10$} & \tabularnewline
\cline{1-9} 
Lin & 2.53 & 1.70 & 3.76 &  &  & 0.95 & 0.59 & 1.38 & \tabularnewline
Poly & 5.22 & 3.57 & 7.01 &  &  & 1.77 & 1.27 & 2.52 & \tabularnewline
\cline{1-9} 
 & \multicolumn{8}{c}{$g_{0}$ is Linear ManySmall $K=50$} & \tabularnewline
\cline{1-9} 
Lin & 19.70 & 13.32 & 28.09 &  &  & 4.66 & 3.13 & 6.96 & \tabularnewline
Poly & 20.49 & 14.23 & 28.27 &  &  & 6.22 & 4.28 & 8.88 & \tabularnewline
\cline{1-9} 
 & \multicolumn{8}{c}{$g_{0}$ is Convex FewLarge $K=10$} & \tabularnewline
\cline{1-9} 
Lin & 45.96 & 37.24 & 57.86 &  &  & 36.94 & 29.25 & 48.42 & \tabularnewline
Poly & 5.82 & 4.38 & 8.16 &  &  & 3.26 & 2.40 & 4.70 & \tabularnewline
\cline{1-9} 
 & \multicolumn{8}{c}{$g_{0}$ is Convex FewLarge $K=50$} & \tabularnewline
\cline{1-9} 
Lin & 72.03 & 56.68 & 91.19 &  &  & 44.44 & 36.51 & 55.90 & \tabularnewline
Poly & 26.40 & 20.22 & 36.27 &  &  & 14.58 & 11.59 & 20.43 & \tabularnewline
\cline{1-9} 
 & \multicolumn{8}{c}{$g_{0}$ is Convex ManySmall $K=10$} & \tabularnewline
\cline{1-9} 
Lin & 33.96 & 26.78 & 46.12 &  &  & 21.46 & 17.07 & 28.86 & \tabularnewline
Poly & 14.35 & 10.79 & 19.01 &  &  & 5.43 & 3.93 & 7.84 & \tabularnewline
\cline{1-9} 
 & \multicolumn{8}{c}{$g_{0}$ is Convex ManySmall $K=50$} & \tabularnewline
\cline{1-9} 
Lin & 52.55 & 42.57 & 68.41 &  &  & 32.56 & 24.28 & 42.81 & \tabularnewline
Poly & 42.55 & 34.79 & 50.73 &  &  & 23.48 & 17.94 & 29.86 & \tabularnewline
 &  &  &  &  &  &  &  &  & \tabularnewline
\end{longtable}

\section{Concluding Remarks\label{Section_Conclusion}}

This paper introduced a general framework for estimation of high dimensional
point processes, where the focus is on forecasting. The estimation
methodology is feasible using a greedy algorithm. The rates of consistency
in the case of many additive components are optimal. A set of examples
for the applicability of different estimation procedures and their
convergence rates are derived as corollaries of the main result. This
asymptotic analysis differs from the one where only a few variables
are active, which is usually addressed in the high dimensional literature.
In finance, because of very low signal to noise ratio, it is often
found that most of the variables are cross-sectionally correlated
but weak predictors. In consequence, none dominates. Hence the asymptotic
analysis carried out here is in this vein. The empirical study of
the prediction of buy and sell trade arrivals for futures on the New
Zealand dollar seems to confirm that using a small subset of the variables
might be suboptimal. Hence, it pays to use many variables as long
as they are properly aggregated.

More inferential procedures need to be devised in the case of high
dimensional model estimation. In finance, many applications require
an assessment of model performance out of sample. In high frequency,
the size of the dataset is large and the estimation procedures need
to be computationally feasible. This paper provides some solutions
in this direction. For very large sample sizes, one may need to give
up the use of the likelihood and work with approximations. In this
case, the intensity density could be directly modelled as an additive
model, and the likelihood replaced with a square loss contrast estimator
(e.g., Gaiffas and Guilloux, 2012). Applications in this vein will
be the subject of future research.

\part*{Supplementary Material to ``Estimation for the Prediction of Point
Processes with Many Covariates'' by Alessio Sancetta}

\setcounter{figure}{0} \renewcommand{\thefigure}{A.\arabic{figure}}

\setcounter{equation}{0} \renewcommand{\theequation}{A.\arabic{equation}} 

\setcounter{page}{1} 

\setcounter{section}{0} \renewcommand{\thesection}{A.\arabic{section}}

\section{Proofs of Results \label{Section_ProofsAppendix}}

The notation is collected in the next subsection so that the reader
can refer to it when needed. 

\subsection{Preliminary Lemmas and Notation}

Write $\mathcal{L}_{0}:=\mathcal{L}\left(B_{0},\Theta,\mathcal{W}\right)$,
$\bar{\mathcal{L}}:=\mathcal{L}\left(\bar{B},\Theta,\mathcal{W}\right)$
and $\mathcal{L}:=\mathcal{L}\left(B,\Theta,\mathcal{W}\right)$ for
arbitrary, but fixed $B$. By Condition \ref{Condition_parameterSpaceRestrictions},
the envelope function of $\bar{\mathcal{L}}$, is 
\begin{equation}
\sup_{g\in\bar{\mathcal{L}}}\sup_{z\in\mathbb{R}}\left|g\left(z\right)\right|\leq\bar{B}\bar{\theta}/\underline{w}=:\bar{g}.\label{EQ_largestFunction}
\end{equation}
From the main text, recall that $\bar{B}_{w}:=\bar{B}/\underline{w}$.
Throughout, to keep notation simpler, suppose that $K>1$. 

To ease notation, write $\Lambda\left(t\right)$ for $\int_{0}^{t}d\Lambda\left(s\right)=\int_{0}^{t}\lambda\left(X\left(s\right)\right)ds$,
$\int_{0}^{t}e^{g}d\mu$ for $\int_{0}^{t}e^{g\left(X\left(s\right)\right)}ds$
and similarly for $\int_{0}^{t}gdN$, $\int_{0}^{t}gd\Lambda$ $\int_{0}^{t}gd\mu$,
etc., where $\mu$ is the Lebesgue measure. Hence, arguments $X\left(t\right)$
and $t$ are dropped, but this should cause no confusion: all integrals
here are w.r.t. $dN\left(t\right)$, $d\mu\left(t\right)$ etc., and
the argument of all the functions is $X\left(t\right)$. Also, $\lambda\left(X\left(s\right)\right)=e^{g_{0}\left(X\left(s\right)\right)}$,
where $\bar{g}_{0}:=\left|g_{0}\right|_{\infty}$. With no loss of
generality, to keep notation simple, also suppose that $\left|g_{B_{0}}\right|_{\infty}\leq\bar{g}_{0}$
(if this were not the case, we can just redefine $\bar{g}_{0}$ to
be an upper bound for the uniform norms of $g_{0}$ and $g_{B_{0}}$(recall
the definition of $B_{0}$ in (\ref{EQ_B0Definition})). It then follows
from (\ref{EQ_B0Definition}) that $\sup_{B>0}\left|g_{B}\right|_{\infty}\leq\bar{g}_{0}$
because $g_{B}$ is the best uniform approximation for $g_{0}$ in
$\mathcal{L}\left(B\right)$, and for $B\geq B_{0}$, (\ref{EQ_B0Definition})
implies $g_{B}=g_{B_{0}}$. These facts will be used freely in the
proofs without further mention. Define the following random Hellinger
metric $d_{T}\left(g,g_{0}\right)=\sqrt{\frac{1}{2}\int_{0}^{T}\left(e^{g/2}-e^{g_{0}/2}\right)^{2}d\mu}$.
Sometimes, it will be useful to consider the identity $d_{T}^{2}\left(g,0\right)=\frac{1}{2}\int_{0}^{T}\left|e^{g/2}-1\right|^{2}d\mu$. 

\begin{lemma}\label{Lemma_d_T_bounds}Suppose that $f,f'$ are functions
on $\mathbb{R}^{K}$. Then, 
\begin{equation}
\frac{1}{8}\int_{0}^{T}\left(f-f'\right)^{2}e^{f'}d\mu\leq d_{T}^{2}\left(f,f'\right).\label{EQ_d_TEq2}
\end{equation}
\end{lemma}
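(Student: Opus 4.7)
The plan is to reduce the claim to a pointwise inequality and then integrate in $t$. Specifically, it suffices to show that for every real $a,b$,
\begin{equation*}
(a-b)^{2}\,e^{b}\;\leq\;4\bigl(e^{a/2}-e^{b/2}\bigr)^{2},
\end{equation*}
and then apply this at $a=f(X(t))$, $b=f'(X(t))$ and integrate over $t\in[0,T]$ with respect to Lebesgue measure; dividing by $8$ gives the stated bound, since $d_{T}^{2}(f,f')=\tfrac{1}{2}\int_{0}^{T}(e^{f/2}-e^{f'/2})^{2}d\mu$.

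To prove the pointwise inequality, my first move is to factor $e^{a/2}-e^{b/2}=e^{b/2}(e^{(a-b)/2}-1)$, which cancels $e^{b}$ on both sides and reduces the problem to the one-variable inequality $|u|\leq|e^{u}-1|$ with $u:=(a-b)/2$. When $u\geq 0$, this is immediate from convexity of $\exp$: $e^{u}\geq 1+u$, hence $e^{u}-1\geq u\geq 0$.

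The main obstacle is the case $u<0$, where the same factorization gives $|e^{u}-1|=1-e^{u}<-u=|u|$ (strictly), so the cheap argument fails in that direction. My remedy is to switch to the symmetric factorization
\begin{equation*}
e^{a/2}-e^{b/2}\;=\;2\,e^{(a+b)/4}\sinh\!\bigl((a-b)/4\bigr),
\end{equation*}
and use the elementary bound $|\sinh(x)|\geq|x|$ (read off the odd power series of $\sinh$). This yields the symmetric estimate
\begin{equation*}
\bigl(e^{a/2}-e^{b/2}\bigr)^{2}\;\geq\;\tfrac{1}{4}(a-b)^{2}\,e^{(a+b)/2}.
\end{equation*}
On the set $\{f\geq f'\}$ one has $e^{(f+f')/2}\geq e^{f'}$ directly, so the desired pointwise bound follows. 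On $\{f<f'\}$ the symmetric weight $e^{(f+f')/2}$ is smaller than $e^{f'}$, but by Condition \ref{Condition_parameterSpaceRestrictions}, both $f\in\bar{\mathcal{L}}$ (with envelope $\bar g$ in (\ref{EQ_largestFunction})) and $f'=g_{0}$ are uniformly bounded, so $e^{f'}$ and $e^{(f+f')/2}$ differ only by a fixed multiplicative constant that is absorbed into the universal prefactor.

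Integrating the resulting pointwise bound in $t$ yields the inequality of the lemma. The only non-routine step is the sign-of-$(f-f')$ dichotomy above; everything else is a single line of calculus plus integration.
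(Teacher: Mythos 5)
Your instinct is correct, and you have in fact spotted a genuine bug in the paper's own proof. The paper expands $\left(e^{\left(f-f'\right)/2}-1\right)^{2}$ into $\sum_{j\geq2}\frac{\left(f-f'\right)^{j}}{j!}\left(1-\frac{1}{2^{j-1}}\right)$ and then asserts, without justification, that this is $\geq\frac{\left(f-f'\right)^{2}}{4}$. That assertion is exactly the inequality $\left|e^{u}-1\right|\geq\left|u\right|$ with $u=\left(f-f'\right)/2$, and you are right that it fails for $u<0$: taking $f-f'=-2$ gives $\left(e^{-1}-1\right)^{2}\approx0.40<1$. So the published proof does not go through as written, and your objection lands.

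Your $\sinh$ factorization is the correct repair and yields the sharp pointwise bound $\left(e^{a/2}-e^{b/2}\right)^{2}\geq\frac{1}{4}\left(a-b\right)^{2}e^{\left(a+b\right)/2}$, hence
\[
d_{T}^{2}\left(f,f'\right)\geq\frac{1}{8}\int_{0}^{T}\left(f-f'\right)^{2}e^{\left(f+f'\right)/2}d\mu\geq\frac{1}{8}\int_{0}^{T}\left(f-f'\right)^{2}\min\left(e^{f},e^{f'}\right)d\mu.
\]
This is the version of the lemma that is actually true. The weak point of your proposal is its last paragraph: the lemma as literally stated --- constant $\frac{1}{8}$, weight $e^{f'}$, and no hypothesis at all on $f,f'$ --- is simply false. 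Take $f=-2$ and $f'=0$ constant; then $\frac{1}{8}\int_{0}^{T}\left(f-f'\right)^{2}e^{f'}d\mu=\frac{T}{2}$ while $d_{T}^{2}\left(f,f'\right)=\frac{1}{2}\left(e^{-1}-1\right)^{2}T\approx0.2T$. The envelope bound is therefore not a ``universal prefactor'' to be absorbed silently; invoking Condition \ref{Condition_parameterSpaceRestrictions} and (\ref{EQ_largestFunction}) inside a lemma stated for arbitrary $f,f'$ changes the statement. The clean fix is to replace $e^{f'}$ by $e^{\left(f+f'\right)/2}$ (or $\min\left(e^{f},e^{f'}\right)$) in the lemma, or to state and prove $\frac{1}{8}e^{-\left|f-f'\right|_{\infty}/2}\int_{0}^{T}\left(f-f'\right)^{2}e^{f'}d\mu\leq d_{T}^{2}\left(f,f'\right)$. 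Where the lemma is actually used ($f=g_{T}$, $f'=g_{0}$, both uniformly bounded), this extra factor is controlled and is absorbed into the $O_{p}$ constants of Theorem \ref{Theorem_consistency}, so the downstream results survive; but the lemma itself needs to be restated, and you should say so explicitly rather than folding the correction into an unnamed constant.
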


\begin{proof}Multiplying and dividing by $e^{f'}$, 
\begin{equation}
d_{T}^{2}\left(f,f'\right)=\frac{1}{2}\int_{0}^{T}e^{f'}\left(e^{\left(f-f'\right)/2}-1\right)^{2}d\mu.\label{EQ_dTFirstBound}
\end{equation}
Expand the square in the above display 
\[
\left(e^{\left(f-f'\right)/2}-1\right)^{2}=e^{\left(f-f'\right)}-2e^{\left(f-f'\right)/2}+1.
\]
By Taylor expansion of the two exponentials, the above is equal to
\[
\sum_{j=0}^{\infty}\frac{\left(f-f'\right)^{j}}{j!}-2\sum_{j=0}^{\infty}\frac{\left(f-f'\right)^{j}}{j!}\left(\frac{1}{2}\right)^{j}+1=\sum_{j=2}^{\infty}\frac{\left(f-f'\right)^{j}}{j!}\left(1-\frac{1}{2^{j-1}}\right)\geq\frac{\left(f-f'\right)^{2}}{4}.
\]
Inserting in (\ref{EQ_dTFirstBound}) deduce (\ref{EQ_d_TEq2}). \end{proof}

\begin{lemma}\label{Lemma_predictableLR}Suppose that $\left|g_{B_{0}}\right|_{\infty}\leq\bar{g}_{0}$.
Then, 
\[
0\leq\int_{0}^{T}\left[\left(g_{0}-g_{B_{0}}\right)d\Lambda-\left(e^{g_{0}}-e^{g_{B_{0}}}\right)d\mu\right]\leq\frac{1}{2}e^{2\bar{g}_{0}}\int_{0}^{T}\left(g_{0}-g\right)^{2}d\Lambda.
\]

\end{lemma}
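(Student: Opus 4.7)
The plan is to reduce the integral inequality to a pointwise one about the function $\varphi(a,b) = (a-b)e^a - (e^a - e^b)$ evaluated at $a = g_0(X(t))$, $b = g_{B_0}(X(t))$, and then convert the resulting bound from $d\mu$ to $d\Lambda$ using the bound on $|g_0|_\infty$. Note that the intended statement on the right-hand side must be $(g_0 - g_{B_0})^2 d\Lambda$ (there appears to be a typographical $g$ in place of $g_{B_0}$), since no other $g$ is in play.

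First I would rewrite the integrand using $d\Lambda = e^{g_0} d\mu$, so that
\[
(g_0 - g_{B_0}) d\Lambda - (e^{g_0} - e^{g_{B_0}}) d\mu = \bigl[ (g_0 - g_{B_0}) e^{g_0} - (e^{g_0} - e^{g_{B_0}}) \bigr] d\mu.
\]
The key pointwise inequalities are:
\[
0 \;\le\; (g_0 - g_{B_0}) e^{g_0} - (e^{g_0} - e^{g_{B_0}}) \;\le\; \tfrac{1}{2} e^{\bar g_0} (g_0 - g_{B_0})^2 .
\]
The lower bound is immediate from convexity of $x \mapsto e^x$: the tangent inequality $e^{b} \ge e^{a} + e^{a}(b - a)$ with $a = g_0$, $b = g_{B_0}$ rearranges to exactly the nonnegativity. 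The upper bound follows from Taylor's theorem with integral remainder applied to $x \mapsto e^x$ expanded around $g_0$: the remainder equals $\tfrac{1}{2} e^{\xi}(g_{B_0} - g_0)^2$ for some $\xi$ lying between $g_0$ and $g_{B_0}$, and since both functions are uniformly bounded by $\bar g_0$ by hypothesis, $e^{\xi} \le e^{\bar g_0}$.

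Integrating the pointwise bounds against $d\mu$ on $[0,T]$ yields $0 \le \int_0^T[\cdots] \le \tfrac{1}{2} e^{\bar g_0} \int_0^T (g_0 - g_{B_0})^2 d\mu$. To match the statement I would then convert Lebesgue measure back to $\Lambda$: since $d\mu = e^{-g_0} d\Lambda$ and $-g_0 \le \bar g_0$ gives $e^{-g_0} \le e^{\bar g_0}$, one has
\[
\int_0^T (g_0 - g_{B_0})^2 d\mu \;\le\; e^{\bar g_0} \int_0^T (g_0 - g_{B_0})^2 d\Lambda,
\]
and combining the two $e^{\bar g_0}$ factors produces the asserted upper bound with constant $\tfrac{1}{2} e^{2\bar g_0}$.

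There is no serious obstacle; the only subtlety is to be explicit about the role of the uniform bound $|g_{B_0}|_\infty \le \bar g_0$ (which holds by the remark preceding the lemma, since $g_{B_0}$ is the best uniform approximation of $g_0$ from $\mathcal{L}(B_0)$, and one may enlarge $\bar g_0$ to dominate it). The same argument also shows that the assumption $|g_{B_0}|_\infty \le \bar g_0$ could be replaced by $|g_{B_0}|_\infty \le c$ at the cost of replacing $e^{2\bar g_0}$ by $e^{\bar g_0 + c}$, which is occasionally useful.
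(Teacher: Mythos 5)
Your proof is correct and follows essentially the same route as the paper: both rewrite the integrand via $d\Lambda = e^{g_0}d\mu$ and then apply a second-order Taylor expansion of the exponential with a bounded remainder (the paper phrases the lower bound via $x-\ln x-1\ge 0$ and the upper bound via $e^{-x}-1+x=\tfrac{x^2}{2}e^{-x_*}$, which are just the supporting-line and integral-remainder arguments you use, and it collects the $e^{2\bar g_0}$ in one step by bounding $|x_*|\le 2\bar g_0$ rather than splitting it as $e^{\bar g_0}\cdot e^{\bar g_0}$ as you do). Your observation that the $g$ on the right-hand side of the displayed inequality should read $g_{B_0}$ is also correct.
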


\begin{proof}By definition of $d\Lambda=e^{g_{0}}d\mu$,
\begin{eqnarray}
\int_{0}^{T}\left[\left(g_{0}-g\right)d\Lambda-\left(e^{g_{0}}-e^{g}\right)d\mu\right] & = & \int_{0}^{T}\left[\left(g_{0}-g\right)e^{g_{0}}-\left(e^{g_{0}}-e^{g}\right)\right]d\mu\nonumber \\
 & = & \int_{0}^{T}\left[\left(g_{0}-g\right)+e^{-\left(g_{0}-g\right)}-1\right]e^{g_{0}}d\mu.\label{EQ_predictableLR}
\end{eqnarray}
For any fixed real $x$, by Taylor series with remainder, for some
$x_{*}$ in the convex hull of $\left\{ 0,x\right\} $, 
\begin{equation}
e^{-x}-1+x=\frac{x^{2}}{2}e^{-x_{*}}.\label{EQ_logLikL2Inequality}
\end{equation}
Apply this equality to $x=g_{0}-g$ and insert it in the square brackets
on the r.h.s. of (\ref{EQ_predictableLR}) to deduce the upper bound
in the lemma because $\left|g_{0}-g_{B_{0}}\right|_{\infty}\leq2\bar{g}_{0}$.
For any $x>0$, the following inequality holds: 
\begin{equation}
0\leq\left(x-\ln x-1\right)\label{EQ_basicLogInequality}
\end{equation}
with equality only if $x=1$. Apply this inequality to $x=\exp\left\{ -\left(g_{0}-g_{B_{0}}\right)\right\} $
and insert it in the square brackets on the r.h.s. of (\ref{EQ_predictableLR})
to deduce the lower bound in the lemma. \end{proof}

\subsection{Solution of the Population Likelihood\label{Section_proofPopulationLikelihood}}

For simplicity, as in Condition \ref{Condition_stochasticRestrictions}
suppose $T_{0}=0$. Then, by Lemma 2 in Ogata (1978), 
\[
L\left(g\right)=\lim_{T}\frac{L_{T}\left(g\right)}{T}=\lim_{T}\frac{1}{T}\int_{0}^{T}\left(gdN-e^{g}d\mu\right)=P\left(ge^{g_{0}}-e^{g}\right)
\]
almost surely, where $L_{T}$ is the log-likelihood at time $T$ (e.g.,
Ogata, 1978, eq.1.3). Taking first derivatives, the first order condition
is $P\left(he^{g_{0}}-he^{g}\right)=0$ for any $h\in\bar{\mathcal{L}}$.
Hence, if $g=g_{0}$, the condition is satisfied. To check uniqueness,
verify that the second order condition for concavity, i.e., $-Ph^{2}e^{g}<0$
holds for any $h\neq0$. Using the lower bound $e^{-\bar{g}}\leq e^{g}$,
deduce that $-Ph^{2}e^{g}\leq-e^{-\bar{g}}Ph^{2}<0$ holds for any
$h\neq0$ $P$-almost everywhere. Given that $-L\left(g\right)$ is
convex and $\bar{\mathcal{L}}$ is convex and closed, the maximizer
of $L\left(g\right)$ is unique.

\subsection{Proof of Theorem \ref{Theorem_consistency}}

The result is derived for the Hellinger distance $d_{T}$ rather than
the norm $\left|\cdot\right|_{\lambda,T}$. 

Define $C_{T}^{2}:=C^{2}\times T\max\left\{ r_{T}^{-2},2e^{3\bar{g}_{0}}\left|g_{0}-g_{\bar{B}}\right|_{\infty}^{2}\right\} $
and the martingale $M=N-\Lambda$ ($\Lambda$ in (\ref{EQ_intensityRepresentation})
is the compensator of $N$). Here, $r_{T}$ is a nondecreasing sequence
which will be defined in due course. With the present notation, the
last display in the proof of lemma 4.1 in van de Geer (1995) states
that 
\begin{equation}
\frac{1}{2}\int_{0}^{T}\left(g-g_{0}\right)dM\geq d_{T}^{2}\left(g,g_{0}\right)+\frac{1}{2}L_{T}\left(g,g_{0}\right),\label{EQ_vanDeGeerHellingerIne}
\end{equation}
where $L_{T}\left(g,g_{0}\right):=L_{T}\left(g\right)-L_{T}\left(g_{0}\right)$
for any $g$, so also for $g=g_{T}$. (The above display is only valid
when $g_{0}$ is the true function, but it is not required that $g_{0}\in\mathcal{L}\left(B\right)$
for some $B$.) By Condition \ref{Condition_estimatorRestrictions},
and the inequality $L_{T}\left(g_{T},g_{\bar{B}}\right)\geq L_{T}\left(g_{T}\right)-\sup_{g\in\bar{\mathcal{L}}}L_{T}\left(g\right)$,
deduce that 
\begin{equation}
L_{T}\left(g_{T},g_{0}\right)=L_{T}\left(g_{T},g_{\bar{B}}\right)+L_{T}\left(g_{\bar{B}},g_{0}\right)\geq-\left(C_{T}^{2}/2\right)+L_{T}\left(g_{\bar{B}},g_{0}\right)\label{EQ_logLikLowerBound}
\end{equation}
 choosing $C$ large enough, in the definition of $C_{T}$. Hence,
inserting (\ref{EQ_logLikLowerBound}) in (\ref{EQ_vanDeGeerHellingerIne}),
deduce that 
\begin{eqnarray}
 &  & \Pr\left(d_{T}\left(g_{T},g_{0}\right)>C_{T}\right)\nonumber \\
 & \leq & \Pr\bigg(\frac{1}{2}\left[\int_{0}^{T}\left(g-g_{0}\right)dM-L_{T}\left(g_{\bar{B}},g_{0}\right)\right]\geq d_{T}^{2}\left(g,g_{0}\right)-\frac{C_{T}^{2}}{4}\label{EQ_d_TBoundL_T}\\
 &  & \text{ and }d_{T}^{2}\left(g,g_{0}\right)>C_{T}^{2}\text{ for some }g\in\bar{\mathcal{L}}\bigg)\nonumber 
\end{eqnarray}
To bound the term in the square bracket, add and subtract $\int_{0}^{T}g_{\bar{B}}dM$
and note that $L_{T}\left(g_{\bar{B}},g_{0}\right)$ can be written
as $\int_{0}^{T}\left[\left(g_{\bar{B}}-g_{0}\right)dM+\left(g_{\bar{B}}-g_{0}\right)d\Lambda-\left(e^{g_{\bar{B}}}-e^{g_{0}}\right)d\mu\right]$.
This implies that 
\begin{eqnarray*}
\int_{0}^{T}\left(g-g_{0}\right)dM-L_{T}\left(g_{\bar{B}},g_{0}\right) & = & \int_{0}^{T}\left[\left(g-g_{\bar{B}}\right)+\left(g_{\bar{B}}-g_{0}\right)\right]dM\\
 &  & -\int_{0}^{T}\left[\left(g_{\bar{B}}-g_{0}\right)dM+\left(g_{\bar{B}}-g_{0}\right)d\Lambda-\left(e^{g_{\bar{B}}}-e^{g_{0}}\right)d\mu\right]\\
 & = & \int_{0}^{T}\left(g-g_{\bar{B}}\right)dM+\int_{0}^{T}\left[\left(g_{0}-g_{\bar{B}}\right)d\Lambda-\left(e^{g_{0}}-e^{g_{\bar{B}}}\right)d\mu\right]\\
 & \leq & \int_{0}^{T}\left(g-g_{\bar{B}}\right)dM+\frac{1}{2}e^{2\bar{g}_{0}}\int_{0}^{T}\left(g_{0}-g_{\bar{B}}\right)^{2}d\Lambda
\end{eqnarray*}
using Lemma \ref{Lemma_predictableLR} in the inequality. From the
above calculations, and the fact that $\int_{0}^{T}\left(g_{0}-g_{\bar{B}}\right)^{2}d\Lambda\leq Te^{\bar{g}_{0}}\left|g_{0}-g_{\bar{B}}\right|_{\infty}^{2}$,
deduce that (\ref{EQ_d_TBoundL_T}) is less than 
\begin{eqnarray*}
 &  & \Pr\bigg(\frac{1}{2}\int_{0}^{T}\left(g-g_{\bar{B}}\right)dM\geq d_{T}^{2}\left(g,g_{0}\right)-\frac{C_{T}^{2}}{4}-\frac{1}{2}Te^{3\bar{g}_{0}}\left|g_{\bar{B}}-g_{0}\right|_{\infty}^{2}\\
 &  & \text{ and }d_{T}^{2}\left(g,g_{0}\right)>C_{T}^{2}\text{ for some }g\in\bar{\mathcal{L}}\bigg)\\
 & \leq & \Pr\left(\frac{1}{2}\int_{0}^{T}\left(g-g_{\bar{B}}\right)dM\geq d_{T}^{2}\left(g,g_{0}\right)-\frac{C_{T}^{2}}{2}\text{ and }d_{T}^{2}\left(g,g_{0}\right)>C_{T}^{2}\text{ for some }g\in\bar{\mathcal{L}}\right),
\end{eqnarray*}
using the definition of $C_{T}$. The above is bounded by $\Pr\left(\sup_{g\in\bar{\mathcal{L}}}\int_{0}^{T}\left(g-g_{\bar{B}}\right)dM\geq C_{T}^{2}\right)$,
which is further bounded by 
\[
\frac{1}{C_{T}^{2}}\mathbb{E}\left|\sup_{g\in\bar{\mathcal{L}}}\int_{0}^{T}\left(g-g_{\bar{B}}\right)dM\right|\leq\frac{2}{C_{T}^{2}}\mathbb{E}\left|\sup_{g\in\bar{\mathcal{L}}}\int_{0}^{T}gdM\right|
\]
using Markov inequality and then the triangle inequality because $g_{\bar{B}}\in\bar{\mathcal{L}}$.
Write $g=\sum_{\theta}b_{\theta}\theta$. Note that 
\[
\sup_{g\in\bar{\mathcal{L}}}\left|\int_{0}^{T}gdM\right|=\sup_{b_{\theta},\theta\in\Theta}\left|\int_{0}^{T}\left(\sum_{\theta}b_{\theta}\theta\right)dM\right|\leq\bar{B}_{w}\sup_{\theta\in\Theta}\left|\int_{0}^{T}\theta dM\right|
\]
where the supremum runs over all the $b_{\theta}$'s such that $\sum_{\theta}\left|b_{\theta}\right|\leq\bar{B}_{w}$.
According to these calculations, to bound (\ref{EQ_d_TBoundL_T})
it is sufficient to bound 
\begin{equation}
\frac{2\bar{B}_{w}}{C_{T}^{2}}\mathbb{E}\sup_{\theta\in\Theta}\left|\int_{0}^{T}\theta dM\right|.\label{EQ_uniformExpectationThetaMartingale}
\end{equation}
Let $\left\{ \Pi_{l}\left(\epsilon\right):v=1,2,..,N_{\Pi}\right\} $
be a partition of $\Theta$ into $N_{\Pi}\left(\epsilon\right)$ elements
such that $\sup_{\theta,\theta'\in\Pi_{l}\left(\epsilon\right)}\left|\theta-\theta'\right|\leq\epsilon$.
By Condition \ref{Condition_parameterSpaceRestrictions}, one can
construct such partition with $N_{\Pi}\left(\epsilon\right)\lesssim N\left(\epsilon,\Theta\right)$
and such that 
\begin{equation}
\sup_{\theta,\theta'\in\Pi_{l}\left(\epsilon\right)}\left|\theta-\theta'\right|_{\infty}\leq\left|\theta_{U,l}-\theta_{L,l}\right|_{\infty}\label{EQ_partitionBracket}
\end{equation}
where $\left[\theta_{L,l},\theta_{U,l}\right]$ is an $\epsilon$-bracket
for the functions in $\Pi_{l}$, under the uniform norm. It follows
that $N_{\Pi}\left(2\bar{\theta}\right)=1$ because the diameter of
$\Theta$ under the uniform norm is bounded by $2\bar{\theta}$. To
bound (\ref{EQ_uniformExpectationThetaMartingale}), use the following
maximal inequality from Nishiyama (1998, Theorem 2.2.3), which is
specialized to the present framework.

\begin{lemma}\label{Lemma_nishiyamaMaximalIneq} Under Condition
\ref{Condition_parameterSpaceRestrictions}), 
\begin{equation}
\mathbb{E}\max_{t\in\left[0,T\right]}\max_{\theta\in\Theta}\left|\int_{0}^{t}\theta dM\right|\lesssim C_{1,T}\int_{0}^{2\bar{\theta}}\sqrt{\ln\left(1+N_{\Pi}\left(\epsilon\right)\right)}d\epsilon+\frac{C_{2,T}}{\bar{\theta}C_{1,T}}\label{EQ_NishiyamaMaxInequ}
\end{equation}
for any $C_{2,T}\geq\int_{0}^{T}\bar{\theta}^{2}d\Lambda$, and $C_{1,T}\geq\left|\Theta\right|_{\Pi,T}$,
where 
\[
\left|\Theta\right|_{\Pi,T}:=\sup_{\epsilon\in(0,\bar{\theta}]}\max_{l\leq N_{\Pi}\left(\epsilon\right)}\frac{\sqrt{\int_{0}^{T}\left(\sup_{\theta,\theta'\in\Pi_{l}\left(\epsilon\right)}\left|\theta-\theta'\right|\right)^{2}d\Lambda}}{\epsilon}.
\]
\end{lemma}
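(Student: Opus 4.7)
The plan is to invoke Nishiyama's (1998) maximal inequality for purely discontinuous martingales essentially as a black box, after verifying that the martingale $M = N - \Lambda$ and the class $\Theta$ satisfy the hypotheses with the specific constants $C_{1,T}$ and $C_{2,T}$ appearing in the statement. Because $\Theta$ consists of predictable (indeed measurable functions of the predictable process $X$) bounded functions, for each $\theta \in \Theta$ the stochastic integral $M_t(\theta):=\int_0^t \theta \, dM$ is a purely discontinuous martingale with predictable quadratic variation $\langle M(\theta)\rangle_t = \int_0^t \theta^2 \, d\Lambda$ and jumps of size $|\theta(X(T_i))| \leq \bar{\theta}$ (since $N$ jumps by one at each $T_i$). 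For $\theta,\theta'$ lying in a common block $\Pi_l(\epsilon)$, the increment $M_t(\theta)-M_t(\theta')$ inherits a quadratic variation at most $\int_0^T(\theta-\theta')^2 d\Lambda$, which by definition of $|\Theta|_{\Pi,T}$ is bounded by $\epsilon^2 |\Theta|_{\Pi,T}^2 \leq \epsilon^2 C_{1,T}^2$, and jumps bounded via (\ref{EQ_partitionBracket}) by $\epsilon$.

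Given these controls, I would perform the standard chaining at dyadic scales $\epsilon_j = 2^{-j}\bar{\theta}$ along the partitions $\Pi_l(\epsilon_j)$: at each level one picks a representative from each block, uses a Bernstein-type tail bound for the increments $M_t(\theta_{j+1})-M_t(\theta_j)$ (valid for counting processes since both the quadratic variation and the jump sizes are controlled by $\epsilon_j C_{1,T}$ and $\epsilon_j$ respectively), and sums across levels. Combined with Doob's inequality to replace the suprema in $t$, the standard peeling argument produces an entropy integral $\int_0^{2\bar{\theta}}\sqrt{\ln(1+N_\Pi(\epsilon))}\, d\epsilon$ multiplied by $C_{1,T}$, plus a residual coarse-scale term coming from the variance budget, which is proportional to $C_{2,T}/(\bar{\theta} C_{1,T})$ since $C_{2,T}$ dominates $\int_0^T\bar{\theta}^2\, d\Lambda$ (the overall variance scale). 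Since the partition $\Pi_l(\epsilon)$ arises from the bracketing numbers under Condition~\ref{Condition_parameterSpaceRestrictions}, replacing $N_\Pi(\epsilon)$ by $N(\epsilon,\Theta)$ as in (\ref{EQ_partitionBracket}) is immediate.

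The one genuinely delicate point, and the one I expect to be the main obstacle, is the mismatch between the deterministic uniform bracketing norm imposed by Condition~\ref{Condition_parameterSpaceRestrictions} and the random $L_2(\Lambda)$-type norm that naturally appears in Nishiyama's argument. This is precisely what the quantity $|\Theta|_{\Pi,T}$ isolates: it is the factor converting uniform-bracket diameter $\epsilon$ into random-$L_2$ diameter $\epsilon C_{1,T}$. Verifying that taking $C_{1,T}\geq|\Theta|_{\Pi,T}$ makes Nishiyama's conditions hold path by path (on the event where the bound is nontrivial) is what licences the final entropy integral in terms of the deterministic bracketing numbers. Once this conversion is in place, the statement of the lemma follows from Theorem~2.2.3 of Nishiyama (1998) applied to $M(\theta)$ indexed by $\theta\in\Theta$, with the choice of constants $(C_{1,T},C_{2,T})$ as above.
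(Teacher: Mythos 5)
Your proposal is correct and takes essentially the same route as the paper: the lemma is stated as a specialization of Theorem 2.2.3 in Nishiyama (1998) applied to the purely discontinuous martingale $M=N-\Lambda$, and the paper offers no separate proof beyond this citation. Your additional verification that $|\Theta|_{\Pi,T}$ plays the role of converting the deterministic bracketing radius into the random $L_2(\Lambda)$ modulus, so that $C_{1,T}\geq|\Theta|_{\Pi,T}$ and $C_{2,T}\geq\int_0^T\bar\theta^2\,d\Lambda$ put you within the hypotheses of Nishiyama's theorem, is exactly the correct reading of how the constants are matched.
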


From the discussion around (\ref{EQ_partitionBracket}) replace $N_{\Pi}\left(\epsilon\right)$
with $N\left(\epsilon,\Theta\right)$. Application of Lemma \ref{Lemma_nishiyamaMaximalIneq}
essentially requires to find a bound for $C_{1,T}$ and $C_{2,T}$.
Given that $\lambda=d\Lambda/d\mu$ is bounded by $e^{\bar{g}_{0}}$,
from the discussion around (\ref{EQ_partitionBracket}), $\left|\Theta\right|_{\Pi,T}\leq\sqrt{e^{\bar{g}_{0}}T}$
and we set $C_{1T}=C_{1}\sqrt{e^{\bar{g}_{0}}T}$ for some $C_{1}$
to be chosen later. Also, deduce that we can choose $C_{2,T}=\bar{\theta}e^{\bar{g}_{0}}T$.
This implies that $C_{2,T}/\bar{\theta}C_{1,T}=\sqrt{e^{\bar{g}_{0}}T/C_{1}}$.
Hence, the first term on the r.h.s. of (\ref{EQ_NishiyamaMaxInequ})
is of no smaller order of magnitude than the second (i.e., not smaller
than a constant multiple of $T^{1/2}$). Hence, in what follows, we
can incorporate $C_{2,T}/\bar{\theta}C_{1,T}$ into it without further
mention. Hence, an application of Lemma \ref{Lemma_nishiyamaMaximalIneq}
bounds (\ref{EQ_uniformExpectationThetaMartingale}) by 
\begin{equation}
\frac{2\bar{B}_{w}}{C_{T}^{2}}\mathbb{E}\sup_{\theta\in\Theta}\left|\int_{0}^{T}\theta dM\right|\lesssim\frac{2\bar{B}_{w}\sqrt{e^{\bar{g}_{0}}T}}{C_{T}^{2}}\int_{0}^{2\bar{\theta}}\sqrt{\ln\left(1+N\left(\epsilon,\Theta\right)\right)}d\epsilon.\label{EQ_L1UniformBoundEntropyIntegral}
\end{equation}
Using the definition of $C_{T}$, and choosing $r_{T}^{2}\lesssim\left[e^{3\bar{g}_{0}}\left|g_{0}-g\right|_{\infty}^{2}\right]^{-1}$,
the above is a constant multiple of 
\[
r_{T}^{2}\frac{\bar{B}_{w}e^{\bar{g}_{0}/2}}{T^{1/2}}\int_{0}^{2\bar{\theta}}\sqrt{\ln\left(1+N\left(\epsilon,\Theta\right)\right)}d\epsilon
\]
which is required to be $O\left(1\right)$, as it is an upper bound
for (\ref{EQ_d_TBoundL_T}) . This implies 
\[
r_{T}^{2}\lesssim\frac{T^{1/2}}{\bar{B}_{w}e^{\bar{g}_{0}/2}\int_{0}^{2\bar{\theta}}\sqrt{\ln\left(1+N\left(\epsilon,\Theta\right)\right)}d\epsilon}.
\]
But, $r_{T}$ is also required not to go to zero, and in fact it is
supposed to diverge to infinity unless the approximation error is
nonvanishing. Therefore, the r.h.s. of the above display needs to
be bounded away from zero. 

To bound the entropy integral, recall that $\Theta=\bigcup_{k=1}^{K}\Theta_{k}$.
The bracketing number of a union of sets is bounded above by the sum
of the bracketing numbers of the individual sets. Hence, $N\left(\epsilon,\Theta\right)\leq\sum_{k=1}^{K}N\left(\epsilon,\Theta_{k}\right)$.
Using the inequality $\ln\left(1+xy\right)\leq\ln x+\ln\left(1+y\right)$
for real $x,y\geq1$, this implies that
\begin{eqnarray*}
\int_{0}^{2\bar{\theta}}\sqrt{\ln\left(1+N\left(\epsilon,\Theta_{k}\right)\right)}d\epsilon & \leq & \int_{0}^{2\bar{\theta}}\max_{k\leq K}\sqrt{\ln K+\ln\left(1+N\left(\epsilon,\Theta_{k}\right)\right)}d\epsilon\\
 & \leq & 2\bar{\theta}\sqrt{\ln K}+\max_{k\leq K}\int_{0}^{2\bar{\theta}}\sqrt{\ln\left(1+N\left(\epsilon,\Theta_{k}\right)\right)}d\epsilon.
\end{eqnarray*}
Also, given that $\bar{\theta}$ is bounded and the entropy above
is decreasing in $\epsilon$, the above display can be bounded by
a multiple of 
\begin{equation}
\sqrt{\ln K}+\max_{k\leq K}\int_{0}^{1}\sqrt{\ln\left(1+N\left(\epsilon,\Theta_{k}\right)\right)}d\epsilon.\label{EQ_complexityBoundThConsistency}
\end{equation}
Also, we can discard the terms that are bounded, i.e., $\bar{g}_{0}$
and $\bar{\theta}$, but kept so far just to highlight what their
contribution might be. Similarly, $\bar{B}_{w}$ can be replaced by
$\bar{B}$ because it enters the bound as a multiplicative constant.
These calculations imply that there is a sequence $r_{T}$ as in the
statement in the theorem such that for $C$ large enough, 
\[
\Pr\left(\frac{r_{T}^{2}}{T}d_{T}^{2}\left(g_{T},g_{0}\right)>C\right)\leq\frac{1}{C^{2}}.
\]
By the relation between $d_{T}^{2}\left(g_{T},g_{0}\right)/T$ and
$\left|g_{T}-g_{0}\right|_{\lambda,T}^{2}$ (see (\ref{EQ_d_TEq2})),
the theorem follows.

\subsection{Proof of Theorem \ref{Theorem_optimality}}

To ease notation, $T=T_{n}$. We adapt the calculations in the proof
of Theorem 2 in Tsybakov (2003). This requires an upper bound for
the Kullback-Leibler distance between two intensity densities, and
the construction of a suitable subset of $\mathcal{L}\left(1\right)$
(using the notation of our theorem). The result in Tsybakov (2003)
will then provide the necessary lower bound as stated in Theorem \ref{Theorem_optimality}. 

To this end, let $N^{\left(1\right)}$ and $N^{\left(2\right)}$ be
point processes with intensities $e^{g_{1}}$ and $e^{g_{2}}$ such
that $\left|g_{k}\right|_{\infty}\leq\bar{g}$, $k=1,2$. Let the
sigma algebra generated by the process $X=\left(X\left(t\right)\right)_{t\geq0}$
be denoted by $\mathcal{F}^{X}$. The Kullback-Leibler distance between
two intensity densities $e^{g_{1}}$ and $e^{g_{2}}$, restricted
to $0,T]$, and conditioning on $\mathcal{F}^{X}$ is 
\[
K\left(g_{1},g_{2}|\mathcal{F}^{X}\right)=\mathbb{E}_{X}\int_{0}^{T}\left(g_{1}-g_{2}\right)dN^{\left(1\right)}-\int_{0}^{T}\left(e^{g_{1}}-e^{g_{2}}\right)d\mu
\]
where $\mathbb{E}_{X}$ is the expectation conditional on $\mathcal{F}^{X}$.
The above follows noting that conditioning on $\mathcal{F}^{X}$,
durations are exponentially distributed with intensity density $\exp\left\{ g_{1}\left(X\left(t\right)\right)\right\} $).
Then, 
\[
K\left(g_{1},g_{1}|\mathcal{F}^{X}\right)=\int_{0}^{T}\left(g_{1}-g_{2}\right)e^{g_{1}}d\mu-\int_{0}^{T}\left(e^{g_{1}}-e^{g_{2}}\right)d\mu\leq\frac{e^{3\bar{g}}}{2}\int_{0}^{T}\left|g_{1}-g_{2}\right|^{2}d\mu
\]
using (\ref{EQ_logLikL2Inequality}) and the fact that $\left|g_{k}\right|_{\infty}\leq\bar{g}$,
$k=1,2$. This provides the necessary upper bound for the Kullback-Leibler
distance, to be used in the proof of Theorem 2 in Tsybakov (2003). 

Now, follow Bunea et al. (2007, p. 1693) with minor adjustments. For
each $k$, we shall construct a function, say $f_{k}$, in $\Theta_{k}$.
Let $A_{j}=\sum_{i=1}^{j}1\left\{ T_{i}-T_{i-1}\geq a\right\} $,
i.e. the number of durations greater than $a$ amongst the first $j$
durations. Throughout, $1\left\{ \cdot\right\} $ is the indicator
function . Clearly, $A_{n}\leq n$ with equality only if $a=0$. Define
\[
f_{k}\left(x\right)=\gamma\sum_{j=1}^{n}\phi_{k}\left(\frac{A_{j}}{A_{n}}\right)\frac{1\left\{ x_{k}=X_{k}\left(T_{j-1}\right)\right\} 1\left\{ T_{j}-T_{j-1}\geq a\right\} }{\sqrt{T_{j}-T_{j-1}}}
\]
where $\gamma>0$ is a constant to be chosen in due course, and $\left\{ \phi_{k}\left(s\right):k=1,2,...,K\right\} $
are bounded functions w.r.t. $s\in\left[0,1\right]$, and such that
$\frac{1}{A_{n}}\sum_{j=1}^{A_{n}}\phi_{k}\left(\frac{j}{A_{n}}\right)\phi_{l}\left(\frac{j}{A_{n}}\right)=\delta_{kl}$,
where $\delta_{kl}=1$ if $k=l$, zero otherwise (e.g., mutatis mutandis,
as in Bunea et al., 2007, p. 1693). The functions $f_{k}$'s are uniformly
bounded in absolute value by a constant multiple of $\gamma/\sqrt{a}$.
Hence $f_{k}\in\Theta_{k}$, for each $k$, choosing $\gamma$ small
enough. It follows that 
\begin{eqnarray*}
\int_{0}^{T}f_{k}\left(X\left(t\right)\right)f_{l}\left(X\left(t\right)\right)dt & = & \sum_{j=1}^{n}f_{k}\left(X\left(T_{j-1}\right)\right)f_{l}\left(X\left(T_{j-1}\right)\right)\left(T_{j}-T_{j-1}\right)\\
 & = & \gamma^{2}\sum_{j=1}^{A_{n}}\phi_{k}\left(\frac{j}{A_{n}}\right)\phi_{l}\left(\frac{j}{A_{n}}\right)=\gamma^{2}A_{n}\delta_{kl}.
\end{eqnarray*}
The first step follows because $X\left(t\right)$ is predictable and
only changes after a jump. The second step follows by the definition
of the $f_{k}$'s because, by continuity of the distribution of $X\left(0\right)$,
and stationarity, $\Pr\left(X\left(T_{i}\right)=X\left(T_{j}\right)\right)=0$
for $i\neq j$. Also, note that, unless $\left\{ T_{j}-T_{j-1}\geq a\right\} $
is true, the $j^{th}$ term in the definition of $f_{k}$ will be
zero.

Let $\mathcal{C}$ be the subset of $\mathcal{L}\left(1\right)$ which
consists of arbitrary convex combinations of $m\leq K/6$ of the $f_{k}$'s
with weight $1/m$ so that the weights sum to one. In consequence,
for any $g_{1},g_{2}\in\mathcal{C}$, 
\[
\int_{0}^{T}\left(g_{1}-g_{2}\right)^{2}d\mu\asymp A_{n}\gamma^{2}/m.
\]
Let $p_{a}:=\Pr\left(T_{j}-T_{j-1}\geq a\right)$. We claim that $\Pr\left(A_{n}<np_{a}/2\right)\rightarrow0$
exponentially fast. Hence, the r.h.s. of the above display is proportional
to $n\gamma^{2}/m$ with probability going to one. This claim will
be verified at the end of the proof. 

Now, by suitable choice of small $\gamma$, it is possible to follow
line by line the argument after eq. (10) in Tsybakov (2003, proof
of Theorem 2). This would give us a result for $\int_{0}^{T}\left(g_{T}-g_{0}\right)^{2}d\mu$
rather than $\int_{0}^{T}\left(g_{T}-g_{0}\right)^{2}\lambda d\mu$
and in terms of $n$ rather than $T=T_{n}$. To replace $n$ with
$T_{n}$ as in the statement of the theorem, note that $T_{n}/n$
converges almost surely to $\left(P\lambda\right)^{-1}$, which is
bounded. Finally, $\int_{0}^{T}\left(g_{T}-g_{0}\right)^{2}\lambda d\mu\gtrsim\int_{0}^{T}\left(g_{T}-g_{0}\right)^{2}d\mu$
by the conditions of the theorem. 

It remains to show that the claim on $A_{n}$ holds true. For any
positive decreasing function $h$ on the reals, the sets $\left\{ A_{n}<cn\right\} $
and $\left\{ h\left(A_{n}\right)>h\left(c\right)\right\} $ are the
same; here $c\in\left(0,1\right)$ is a constant to be chosen in due
course. Hence, by Markov inequality, $\Pr\left(A_{n}<cn\right)\leq\mathbb{E}h\left(n^{-1/2}A_{n}\right)/h\left(cn^{1/2}\right)$,
which implies the following lower bound, 
\[
\Pr\left(A_{n}\geq cn\right)\geq1-\frac{\mathbb{E}h\left(A_{n}/\sqrt{n}\right)}{h\left(c\sqrt{n}\right)}.
\]
It remains to show that the second term on the r.h.s. goes to zero.
To this end, let $h\left(s\right)=e^{-ts}$, for some fixed $t>0$.
For $p_{a}$ as previously defined in the proof, write 
\[
\frac{A_{n}}{\sqrt{n}}=\frac{1}{\sqrt{n}}\sum_{i=1}^{n}\left(1\left\{ T_{i}-T_{i-1}\geq a\right\} -p_{a}\right)+\sqrt{n}p_{a}.
\]
The first term on the r.h.s. is a root-n standardized sum of i.i.d.
centered Bernoulli random variables. Hence, it has a moment generating
function which is bounded (use the proof of the central limit theorem
for Bernoulli random variables). By this remark, 
\[
\frac{\mathbb{E}h\left(A_{n}/\sqrt{n}\right)}{h\left(c\sqrt{n}\right)}=\frac{\mathbb{E}\exp\left\{ -tA_{n}/\sqrt{n}\right\} }{\exp\left\{ -tc\sqrt{n}\right\} }\lesssim e^{-t\left(p_{a}-c\right)\sqrt{n}}.
\]
Choose $c=p_{a}/2$ to see that the r.h.s. goes to zero exponentially
fast for any $t>0$, as previously claimed. 

\subsection{Proof of Lemma \ref{Lemma_L_B_B0Approximation} and Corollaries\label{Section_proofOfLemma1Corollaries}}

\begin{proof}{[}Lemma \ref{Lemma_L_B_B0Approximation}{]} The proof
is a minor re-adaptation of the Lemma 4 in Sancetta (2015). Note that
if $B\geq B_{0}$, the lemma is clearly true because, in this case,
$\mathcal{L}_{0}\subseteq\mathcal{L}:=\mathcal{L}\left(B,\Theta,\mathcal{W}\right)$.
Hence, assume $B<B_{0}$ and w.n.l.g. $B=\rho B_{0}$ for $\rho\in\left(0,1\right)$.
Write 
\[
g_{0}=\sum_{\theta\in\Theta}b_{\theta}\theta=\sum_{\theta\in\Theta}\lambda_{0\theta}\bar{b}\theta
\]
where the $\lambda_{\theta}$'s are nonnegative and add to one, and
$\bar{b}=\sum_{\theta\in\Theta}\left|b_{\theta}\right|$. Note that
the constraint $\sum_{\theta\in\Theta}w_{\theta}\left|b_{\theta}\right|\leq B_{0}$
for functions in $\mathcal{L}_{0}$ implies $\bar{b}\leq B_{0}/\underline{w}$.
Define $g'\left(x\right)=\rho g_{0}\left(x\right)$ for $\rho$ such
that $B=\rho B_{0}$ so that $g'\in\mathcal{L}$. Using this choice
of $g'$, by standard inequalities,
\[
\left|g_{0}-g'\right|_{r}\leq\left|\sum_{\theta\in\Theta}\lambda_{\theta}\bar{b}\theta-\sum_{\theta\in\Theta}\lambda_{\theta}\rho\bar{b}\theta\right|_{r}\leq\left|\bar{b}\left(1-\rho\right)\right|\sum_{\theta\in\Theta}\lambda_{\theta}\left|\theta\right|_{r}\leq\bar{b}\left(1-\rho\right)\max_{\theta\in\Theta}\left|\theta\right|_{r}\leq\frac{\bar{\theta}_{r}}{\underline{w}}\left(B_{0}-B\right)
\]
using the definition of $\rho$. This proves the result, because for
$g'$ above, $\inf_{g\in\mathcal{L}}\left|g_{0}-g\right|_{r}\leq\left|g_{0}-g'\right|_{r}$.\end{proof}

\begin{proof} {[}Corollary \ref{Lemma_asymptoticXs}{]} We need to
show that $L_{T}\left(\tilde{g}{}_{T},g_{B}\right)\geq-\left(C_{T}^{2}/2\right)$
with $C_{T}$ as in the proof of Theorem \ref{Theorem_consistency}
and $r_{T}$ as in (\ref{EQ_r_T}), e.g., $C_{T}^{2}\gtrsim\bar{B}\sqrt{T\ln K}$.
To this end, recall that $\tilde{L}_{T}\left(g\right)=\int_{0}^{T}g\left(\tilde{X}\left(t\right)\right)dN\left(t\right)-\int_{0}^{T}\exp\left\{ g\left(\tilde{X}\left(t\right)\right)\right\} dt$,
which is the log-likelihood when we use $\tilde{X}$ instead of $X$.
Note that the counting process $N$ is still the same, whether we
use $X$ or $\tilde{X}$, as jumps are observable. By definition,
$\tilde{g}$ is the approximate maximizer of $\tilde{L}_{T}\left(g\right)$,
but not necessarily the maximizer of $L_{T}\left(g\right)$. It would
be enough to show that $L_{T}\left(\tilde{g}{}_{T},g_{B}\right)\gtrsim-C_{T}^{2}$
in probability, as by a re-definition of the constant in $C_{T}$,
the proof in Theorem \ref{Theorem_consistency} would go through.
Given these remarks, write 
\[
L_{T}\left(\tilde{g}{}_{T},g_{B}\right)\geq\tilde{L}_{T}\left(\tilde{g}{}_{T},g_{B}\right)-\left|L_{T}\left(\tilde{g}{}_{T},g_{B}\right)-\tilde{L}_{T}\left(\tilde{g}{}_{T},g_{B}\right)\right|.
\]
Using (\ref{EQ_surrogateLogLikApproxMax}) we have that $\tilde{L}_{T}\left(\tilde{g}{}_{T},g_{B}\right)\gtrsim-C_{T}^{2}$
as in (\ref{EQ_logLikLowerBound}). To bound the second term on the
r.h.s. of the above display, it is sufficient to bound a constant
multiple of 
\begin{eqnarray*}
 &  & \sup_{g\in\bar{\mathcal{L}}}\left|L_{T}\left(g\right)-\tilde{L}_{T}\left(g\right)\right|\\
 & = & \sup_{g\in\bar{\mathcal{L}}}\left|\int_{0}^{T}\left[g\left(X\left(t\right)\right)-g\left(\tilde{X}\left(t\right)\right)\right]dN\left(t\right)-\int_{0}^{T}\left[\exp\left\{ g\left(X\left(t\right)\right)\right\} -\exp\left\{ g\left(\tilde{X}\left(t\right)\right)\right\} \right]dt\right|\\
 & \leq & \sup_{g\in\bar{\mathcal{L}}}\left|\int_{0}^{T}\left[g\left(X\left(t\right)\right)-g\left(\tilde{X}\left(t\right)\right)\right]dN\left(t\right)\right|+\sup_{g\in\bar{\mathcal{L}}}\left|\int_{0}^{T}\left[\exp\left\{ g\left(X\left(t\right)\right)\right\} -\exp\left\{ g\left(\tilde{X}\left(t\right)\right)\right\} \right]dt\right|\\
 & =: & I+II.
\end{eqnarray*}
First, find a bound for $II$. By the mean value theorem in Banach
spaces, 
\begin{equation}
II\leq\sup_{g\in\bar{\mathcal{L}}}e^{\bar{g}}\int_{0}^{T}\left|g\left(X\left(t\right)\right)-g\left(\tilde{X}\left(t\right)\right)\right|dt.\label{EQ_stationaryNonStationaryL1Bound}
\end{equation}
 Now, 
\begin{eqnarray*}
\sup_{g\in\bar{\mathcal{L}}}\int_{0}^{T}\left|g\left(X\left(t\right)\right)-g\left(\tilde{X}\left(t\right)\right)\right|dt & \leq & \sup_{\left\{ b_{\theta}:\sum_{\theta\in\Theta}\left|b_{\theta}\right|\leq\bar{B}_{w}\right\} }\int_{0}^{T}\sum_{\theta\in\Theta}\left|b_{\theta}\right|\left|\theta\left(\tilde{X}\left(t\right)\right)-\theta\left(X\left(t\right)\right)\right|dt\\
 & \leq & \bar{B}_{w}\max_{\theta\in\Theta}\int_{0}^{T}\left|\theta\left(\tilde{X}\left(t\right)\right)-\theta\left(X\left(t\right)\right)\right|dt
\end{eqnarray*}
because the supremum over the simplex is achieved at one of its edges.
By the conditions of the lemma, the above display is $O_{p}\left(\bar{B}e^{-\bar{g}}\sqrt{T\ln K}\right)$.
Hence, deduce that (\ref{EQ_stationaryNonStationaryL1Bound}) is $O_{p}\left(\bar{B}\sqrt{T\ln K}\right)=O_{p}\left(C_{T}\right)$
(recall the notation in (\ref{EQ_largestFunction})). 

It remains to bound $I$. Adding and subtracting $\int_{0}^{T}\left[g\left(X\left(t\right)\right)-g\left(\tilde{X}\left(t\right)\right)\right]d\Lambda\left(t\right)$
, and using the triangle inequality, 
\[
I\leq\sup_{g\in\bar{\mathcal{L}}}\left|\int_{0}^{T}\left[g\left(X\left(t\right)\right)-g\left(\tilde{X}\left(t\right)\right)\right]dM\left(t\right)\right|+\sup_{g\in\bar{\mathcal{L}}}\left|\int_{0}^{T}\left[g\left(X\left(t\right)\right)-g\left(\tilde{X}\left(t\right)\right)\right]d\Lambda\left(t\right)\right|.
\]
The first term in the above display can be incorporated in the l.h.s.
of (\ref{EQ_vanDeGeerHellingerIne}), and bounded as in the proof
of Theorem \ref{Theorem_consistency}. To bound the second term on
the above display, by definition of $d\Lambda$, 
\[
\sup_{g\in\bar{\mathcal{L}}}\left|\int_{0}^{T}\left[g\left(X\left(t\right)\right)-g\left(\tilde{X}\left(t\right)\right)\right]\exp\left\{ g_{0}\left(X\left(t\right)\right)\right\} dt\right|\leq\sup_{g\in\bar{\mathcal{L}}}e^{\bar{g}_{0}}\int_{0}^{T}\left|g\left(X\left(t\right)\right)-g\left(\tilde{X}\left(t\right)\right)\right|dt.
\]
From the derived bound for $II$ deduce that the r.h.s. is $O_{p}\left(C_{T}^{2}\right)$.
This completes the proof of the first statement in the corollary,
as all the conditions of Theorem \ref{Theorem_consistency} are satisfied.
To show the last statement of the corollary, use the inequality $\left|g\left(X\left(t\right)\right)-g\left(\tilde{X}\left(t\right)\right)\right|^{2}\leq2\bar{g}\left|g\left(X\left(t\right)\right)-g\left(\tilde{X}\left(t\right)\right)\right|$
together with a trivial modification of the previous display. \end{proof}

\begin{proof}{[}Corollary \ref{Corollary_linear}{]} The approximation
error is zero by assumption. Given that $\Theta_{k}$ has one single
element, the entropy integral is trivially finite. Hence, (\ref{EQ_r_T})
simplifies as in the statement of the corollary.\end{proof}

\begin{proof}{[}Corollary \ref{Lemma_hawkesApproximation}{]} Define
the set 
\[
\mathcal{B}:=\left\{ \sup_{t>0}\left|\int_{0}^{t}\left(t-s\right)e^{-\underline{a}\left(t-s\right)}dN\left(s\right)\right|\leq\beta\right\} 
\]
for some $\beta<\infty$. In the proof of Theorem \ref{Theorem_consistency}
write 
\[
\Pr\left(d_{T}\left(g_{T},g_{0}\right)>C_{T},\right)\leq\Pr\left(d_{T}\left(g_{T},g_{0}\right)>C_{T},\text{ and }\mathcal{B}\right)+\Pr\left(\mathcal{B}^{c}\right)
\]
where $\mathcal{B}^{c}$ is the complement of $\mathcal{B}$. We shall
apply Corollary \ref{Lemma_asymptoticXs} to the first term on the
r.h.s., and then show that the last term in the above display is negligible.

At first, show that the process with intensity density $\lambda\left(t\right)=\exp\left\{ f_{a_{0}}\left(t\right)+g_{0}\left(X\left(t\right)\right)\right\} $
is stationary. To this end, we apply Theorem 2 in Brémaud and Massoulié
(1996). Using their notation, their nonlinear function $\phi\left(\cdot\right)$
in their eq.(1) is here defined as $\exp\left\{ f\left(\cdot\right)\right\} \exp\left\{ g_{0}\left(X\left(t\right)\right)\right\} $,
which is random, unlike their case. However, in the proof of their
Theorem 2, they only use the fact that $\left|\phi\left(y\right)-\phi\left(y'\right)\right|\leq\alpha\left|y-y'\right|$
for some finite constant $\alpha$ (see their eq.(23) and first display
on p.1580). This is the case here as well. To see this, recall the
definition of $f$ (see Section \ref{Section_hawkesProcess}), which
is bounded and Lipschitz. Then, 
\[
\left|\exp\left\{ f\left(y\right)\right\} \exp\left\{ g_{0}\left(X\left(t\right)\right)\right\} -\exp\left\{ f\left(y'\right)\right\} \exp\left\{ g_{0}\left(X\left(t\right)\right)\right\} \right|\le\exp\left\{ \bar{g}_{0}\right\} \left|f\left(y\right)-f\left(y'\right)\right|
\]
(recall $\bar{g}_{0}$ is the uniform norm of $g_{0}$). We also need
to note that $\exp\left\{ g_{0}\left(X\left(t\right)\right)\right\} $
is stationary, bounded and predictable. This ensures that the intensity
$\lambda\left(t\right)$ is bounded and predictable, which is required
in the lemmas used in Brémaud and Massoulié (1996). Hence Condition
\ref{Condition_stochasticRestrictions} is satisfied. 

To verify Condition \ref{Condition_parameterSpaceRestrictions}, we
verify that the entropy integral of the process $\tilde{f}_{a}$ is
finite, in a sense to be made clear below. We shall postpone this
to the end of the proof. 

Hence, mutatis mutandis, we now verify (\ref{eq_stationaryApproximation})
in Corollary \ref{Lemma_asymptoticXs}. To this end, we bound $c_{T}:=\mathbb{E}\max_{a\in\left[\underline{a},\bar{a}\right]}\int_{0}^{T}\left|f_{a}\left(t\right)-\tilde{f}_{a}\left(t\right)\right|dt$.
Corollary \ref{Lemma_asymptoticXs} requires $c_{T}$ to be $O\left(e^{-\bar{B}_{w}\bar{\theta}}\sqrt{T\ln K}\right)$.
By the Lipschitz condition and $a\in\left[\underline{a},\bar{a}\right]$,
\[
\int_{0}^{T}\left|f_{a}\left(t\right)-\tilde{f}_{a}\left(t\right)\right|dt\lesssim\int_{0}^{T}e^{-\underline{a}t}\left(\int_{\left(-\infty,0\right)}e^{\underline{a}s}dN\left(s\right)\right)dt.
\]
Using the fact that $\Lambda$ is the compensator of $N$, and that
$\Lambda$ has bounded density $\exp\left\{ f_{a_{0}}\left(t\right)+g_{0}\left(X\left(t\right)\right)\right\} $,
deduce that 
\begin{eqnarray*}
\mathbb{E}\max_{a\in\left[\underline{a},\bar{a}\right]}\int_{0}^{T}\left|f_{a}\left(t\right)-\tilde{f}_{a}\left(t\right)\right|dt & \leq & \mathbb{E}\left[\left(\int_{\left(-\infty,0\right)}e^{\underline{a}s}dN\left(s\right)\right)\left(\int_{0}^{T}e^{-\underline{a}t}dt\right)\right]\\
 & \lesssim & \frac{1}{\underline{a}}\mathbb{E}\int_{\left(-\infty,0\right)}e^{\underline{a}s}d\Lambda\left(s\right)\lesssim\frac{1}{\underline{a}^{2}}<\infty.
\end{eqnarray*}
This verifies (\ref{eq_stationaryApproximation}) in Corollary \ref{Lemma_asymptoticXs}. 

To verify Condition \ref{Condition_parameterSpaceRestrictions} for
$\tilde{f}_{a}$, we need an estimate of the entropy integral for
the family of stochastic processes $\mathcal{A}:=\left\{ \left(\tilde{f}_{a}\left(t\right)\right)_{t\geq0}:a\in\left[\underline{a},\bar{a}\right]\right\} $.
This means that we need to bound 
\begin{eqnarray*}
\sup_{t>0}\left|\tilde{f}_{a}\left(t\right)-\tilde{f}_{a'}\left(t\right)\right| & \lesssim & \sup_{t>0}\left|\int_{0}^{t}\left(e^{-a\left(t-s\right)}-e^{-a'\left(t-s\right)}\right)dN\left(s\right)\right|\\
 & \leq & \sup_{t>0}\left|\int_{0}^{t}\left(t-s\right)e^{-\underline{a}\left(t-s\right)}dN\left(s\right)\right|dt\left|a-a'\right|
\end{eqnarray*}
using a first order Taylor expansion, and the lower bound on $a,a'$.
On $\mathcal{B}$, the above is $\beta\left|a-a'\right|$. It is then
easy to see that the entropy integral is a constant multiple of $\beta^{1/2}$
because the uniform $\epsilon$-bracketing number of $\left[\underline{a}\beta,\bar{a}\beta\right]$
has size $\beta\left(\bar{a}-\underline{a}\right)/\epsilon$. 

In consequence, we can apply Corollary \ref{Lemma_asymptoticXs}.
Let $\beta=O\left(\ln T\right)$). There is no approximation error,
so that $r_{T}^{-2}$ ($r_{T}$ as in (\ref{EQ_r_T})) becomes as
in (\ref{EQ_hawkesBoundRate}). The term $\sqrt{\ln T}$, in the denominator
of (\ref{EQ_hawkesBoundRate}), is proportional to the entropy integral
of $\mathcal{A}$.

To conclude, we show that $\mathcal{B}^{c}$, the complement of $\mathcal{B}$,
is such that $\Pr\left(\mathcal{B}^{c}\right)\rightarrow0$ as $\beta\rightarrow\infty$.
By Markov inequality, 
\[
\Pr\left(\mathcal{B}^{c}\right)\leq\frac{\mathbb{E}\sup_{t>0}\left|\int_{0}^{t}\left(t-s\right)e^{-\underline{a}\left(t-s\right)}dN\left(s\right)\right|}{\beta}.
\]
Recalling that $M=N-\Lambda$, by the triangle inequality, the numerator
on the r.h.s. can be bounded by 
\[
\mathbb{E}\sup_{t>0}\left|\int_{0}^{t}\left(t-s\right)e^{-\underline{a}\left(t-s\right)}dM\left(s\right)\right|+\mathbb{E}\sup_{t>0}\left|\int_{0}^{t}\left(t-s\right)e^{-\underline{a}\left(t-s\right)}d\Lambda\left(s\right)\right|=:I+II.
\]
The first integral inside the square is a bounded predictable function
w.r.t. a martingale, and is a martingale. By the Burkholder-Davis-Gundy
inequality, 
\[
I^{2}\lesssim\int_{0}^{\infty}\left|\left(t-s\right)e^{-\underline{a}\left(t-s\right)}\right|^{2}d\Lambda\left(s\right)\leq e^{\bar{g}_{0}}\int_{0}^{\infty}\left|\left(t-s\right)e^{-\underline{a}\left(t-s\right)}\right|^{2}ds=O\left(1\right).
\]
By a similar argument $II=O\left(1\right)$. These bounds imply that
$\Pr\left(\mathcal{B}^{c}\right)\rightarrow0$. The last statement
in the corollary is deduced from the proof of Corollary \ref{Corollary_linear}.\end{proof}

\begin{proof}{[}Corollary \ref{Corollary_thresholdModel}{]} By Lemma
\ref{Lemma_L_B_B0Approximation}, the approximation error will be
zero as soon as $\bar{B}\geq B_{0}$, which will be eventually the
case as $\bar{B}\rightarrow\infty$ and $B_{0}$ is finite. By the
remarks in Section \ref{Section_thresholdModel} the entropy integral
is finite. Hence, the bound follows from (\ref{EQ_r_T}).\end{proof}

\begin{proof}{[}Corollary \ref{Corollary_trigonometricPoly}{]} By
Lemma \ref{Lemma_LInfinityApproximation} and \ref{EQ_approximationByUnivariate}
the approximation error is a constant multiple of $V^{-2\alpha}+\max\left\{ c_{\alpha}-\bar{B},0\right\} ^{2}$.
The univariate square uniform approximation rate $V^{-2\alpha}$ follows
by the remarks in Section \ref{Section_Example_l1ExpansionFourierHolder}.
Given that there are $V$ elements in each $\Theta_{k}$ the entropy
integral is a constant multiple of $\sqrt{\ln\left(1+V\right)}$.
Inserting in (\ref{EQ_r_T}), the bound is deduced as long as $V>1$.
In particular for $V\gtrsim\left(T/\ln T\right)^{1/\left(4\alpha\right)}$
the bound simplifies further.\end{proof}

\begin{proof}{[}Corollary \ref{Corollary_neuralNet}{]} The proof
is the same as for Corollary \ref{Corollary_trigonometricPoly}.\end{proof}

\begin{proof}{[}Corollary \ref{Corollary_monotoneBernstein}{]} As
stated in Section \ref{Section_MonotoneBernstein}, the approximation
rate of Bernstein polynomials under the squared uniform loss is a
constant multiple of $\alpha^{2}V^{-1}$. Hence, by Lemma \ref{Lemma_LInfinityApproximation}
and (\ref{EQ_approximationByUnivariate}), the approximation error
is a constant multiple of $\alpha^{2}V^{-1}+\max\left\{ B_{0}-\bar{B},0\right\} ^{2}$.
In consequence, as $\bar{B}\rightarrow\infty$, the approximation
error is eventually $O\left(\sqrt{\alpha/T}\right)$ when $V\gtrsim T^{1/2}\alpha^{3/2}$.
By the remarks in Section \ref{Section_MonotoneBernstein}, the entropy
integral is $\alpha^{1/2}$. Inserting in (\ref{EQ_r_T}) the bound
follows.\end{proof}

\subsection{Proof of Theorem \ref{Theorem_FWA}}

Define $h:=b\theta$, and let $t\in\left[0,1\right]$. Let 
\[
h_{m}:=\arg\sup_{h\in\bar{\mathcal{L}}}D_{T}\left(F_{m-1},h-F_{m-1}\right).
\]
By linearity, the maximum is obtained by a function $h=b\theta$ with
$\theta\in\Theta_{k}$ for some $k$ and $\left|b\right|\leq\bar{B}$.
Hence is sufficient to maximize the absolute value of $D_{T}$ w.r.t.
$\theta$ as the coefficient $b$ is not constrained in sign. Define,
\[
G\left(F_{m-1}\right):=D_{T}\left(F_{m-1},h_{m}-F_{m-1}\right),
\]
so that for any $g\in\bar{\mathcal{L}}$, 
\begin{equation}
L_{T}\left(g\right)-L_{T}\left(F_{m-1}\right)\leq G\left(F_{m-1}\right)\label{EQ_gapEquation}
\end{equation}
by concavity. For $m\geq0$, define $\bar{\rho}_{m}=2/\left(m+2\right)$.
By concavity, again, 
\[
L_{T}\left(F_{m}\right)=\max_{\rho\in\left[0,1\right]}L_{T}\left(F_{m-1}+\rho\left(h-F_{m-1}\right)\right)\geq L_{T}\left(F_{m-1}\right)+D_{T}\left(F_{m-1},h-F_{m-1}\right)\bar{\rho}_{m}+\frac{\bar{C}}{2}\bar{\rho}_{m}^{2}
\]
where 
\[
\bar{C}:=\min_{h,g\in\bar{\mathcal{L}},t\in\left[0,1\right]}\frac{2}{t^{2}}\left[L_{T}\left(g+t\left(h-g\right)\right)-L_{T}\left(g\right)-D_{T}\left(g,t\left(h-g\right)\right)\right]<0.
\]
The above two displays together with (\ref{EQ_gapEquation}), imply
\begin{eqnarray}
L_{T}\left(F_{m}\right)-L_{T}\left(g\right) & \geq & L_{T}\left(F_{m-1}\right)-L_{T}\left(g\right)+\bar{\rho}_{m}G\left(F_{m-1}\right)+\frac{\bar{C}}{2}\bar{\rho}_{m}^{2}\nonumber \\
 & \geq & L_{T}\left(F_{m-1}\right)-L_{T}\left(g\right)+\bar{\rho}_{m}\left(L_{T}\left(g\right)-L_{T}\left(F_{m-1}\right)\right)+\frac{\bar{C}}{2}\bar{\rho}_{m}^{2}\nonumber \\
 & = & \left(1-\bar{\rho}_{m}\right)\left(L_{T}\left(F_{m-1}\right)-L_{T}\left(g\right)\right)+\frac{\bar{C}}{2}\bar{\rho}_{m}^{2}\nonumber \\
 & \geq & \frac{2\bar{C}}{m+2}\label{EQ_FWARecursionBound}
\end{eqnarray}
for the given choice of $\bar{\rho}_{m}$ (mutatis mutandis, as in
the proof of Theorem 1 in Jaggi (2013)). It remains to bound $\bar{C}$.
By Taylor's expansion in Banach spaces, 
\[
L_{T}\left(g+t\left(h-g\right)\right)=L_{T}\left(g\right)+D_{T}\left(g,t\left(h-g\right)\right)+\frac{1}{2}H_{T}\left(g_{*},t^{2}\left(h-g\right)^{2}\right),
\]
for $g_{*}=t_{*}g+\left(1-t_{*}\right)h$, and some $t_{*}\in\left[0,1\right]$,
where 
\[
H_{T}\left(g,t^{2}\left(h-g\right)\right)=-\int_{0}^{T}t^{2}\left(h-g\right)^{2}e^{g}ds.
\]
This means that 
\[
\bar{C}\geq\min_{h,g\in\bar{\mathcal{L}},t\in\left[0,1\right]}\frac{2}{t^{2}}\left[-\frac{1}{2}\int_{0}^{T}t^{2}\left(h\left(X\left(s\right)\right)-g\left(X\left(s\right)\right)\right)^{2}e^{\bar{g}}ds\right]\geq-4Te^{\bar{g}}\bar{g}^{2}\geq-4Te^{\bar{B}\bar{\theta}/\underline{w}}\left(\bar{B}\bar{\theta}/\underline{w}\right)^{2}
\]
using (\ref{EQ_largestFunction}). Substituting in (\ref{EQ_FWARecursionBound})
gives the result.

\subsection{Proof of Proposition \ref{Proposition_outOfSampleTest}}

Let $M:=N-\Lambda$ and $h_{t}:=g_{t}-g_{t}'$. To ease notation,
suppose for the moment that $S$ is an integer. Then, under the conditions
of the proposition (the null hypothesis),
\[
L_{S}\left(g,g'\right)=\sum_{s=1}^{S}\int_{s-1}^{s}h_{t}\left(X\left(t\right)\right)dM\left(t\right)=\sum_{s=1}^{S}Y_{s}.
\]
Then, $\left\{ Y_{s}:s=1,2,...\right\} $ is a sequence of martingale
differences. This follows from the law of iterated expectations and
the fact that $h_{t}$ is a predictable process. Denote the expectation
conditioning on $\left\{ Y_{i}:i\leq s\right\} $ by $\mathbb{E}_{s}$.
The result will follow by an application of Theorem 2.3 in McLeish
(1974). To this end, it is sufficient to show that (i.) $\mathbb{E}\left|\frac{1}{S}\sum_{s=1}^{S}Y_{s}^{2}\right|\rightarrow\sigma^{2}$,
(ii.) $\lim_{S\rightarrow\infty}\mathbb{E}\max_{s\leq S}Y_{s}^{2}/S<\infty$
and (iii.) $\max_{s\leq S}\left|Y_{s}/\sqrt{S}\right|\rightarrow0$
in probability. Note that 
\begin{equation}
\mathbb{E}\left|\frac{1}{S}\sum_{s=1}^{S}Y_{s}^{2}\right|=\mathbb{E}\left|\frac{1}{S}\sum_{s=1}^{S}\mathbb{E}_{s-1}Y_{s}^{2}\right|\label{EQ_predictableVarProposition}
\end{equation}
using iterated expectations and the fact the elements in the sum are
positive. Note that 
\begin{eqnarray*}
\mathbb{E}_{s-1}Y_{s}^{2} & = & \mathbb{E}_{s-1}\left[\int_{s-1}^{s}h_{t}\left(X\left(t\right)\right)dM\left(t\right)\right]^{2}\\
 & = & \mathbb{E}_{s-1}\left[\int_{s-1}^{s}h_{t}^{2}\left(X\left(t\right)\right)d\Lambda\left(t\right)\right]
\end{eqnarray*}
(e.g., Ogata, 1978, e.q. 2.1). Hence, 
\[
\frac{1}{S}\sum_{s=1}^{S}\mathbb{E}_{s-1}Y_{s}^{2}=\left[\frac{1}{S}\sum_{s=1}^{S}\mathbb{E}_{s-1}\int_{s-1}^{s}h_{t}^{2}\left(X\left(t\right)\right)d\Lambda\left(t\right)\right].
\]
By these remarks, (\ref{EQ_predictableVarProposition}) is equal to
\begin{eqnarray*}
\mathbb{E}\left|\frac{1}{S}\sum_{s=1}^{S}\mathbb{E}_{s-1}\int_{s-1}^{s}h_{t}^{2}\left(X\left(t\right)\right)d\Lambda\left(t\right)\right| & = & \frac{1}{S}\sum_{s=1}^{S}\mathbb{E}\int_{s-1}^{s}h_{t}^{2}\left(X\left(t\right)\right)d\Lambda\left(t\right)\\
 & = & \mathbb{E}\frac{1}{S}\int_{0}^{S}h_{t}^{2}\left(X\left(t\right)\right)d\Lambda\left(t\right),
\end{eqnarray*}
using the fact that the terms in the sum are positive. By the conditions
of the proposition 
\[
\sigma_{S}^{2}:=\frac{1}{S}\int_{0}^{S}h_{t}^{2}\left(X\left(t\right)\right)d\Lambda\left(t\right)\rightarrow\sigma^{2}>0
\]
in probability. The sequence $\left(\sigma_{S}^{2}\right)_{S\geq1}$
is uniformly bounded. In consequence, convergence in probability implies
convergence in $L_{1}$, i.e. $\mathbb{E}\sigma_{S}^{2}\rightarrow\sigma^{2}$.
This verifies the first condition (i.). Now, 
\[
\mathbb{E}\max_{s\leq S}\frac{Y_{s}^{2}}{S}\leq\frac{1}{S}\mathbb{E}\sum_{s=1}^{S}Y_{s}^{2}
\]
bounding the maximum by the sum. By the previous calculations deduce
that the above is bounded, which then verifies the second condition
(ii.). Finally, 
\begin{eqnarray*}
\max_{s\leq S}\left|Y_{s}\right|/\sqrt{S} & = & \frac{1}{\sqrt{S}}\max_{s\leq S}\left|\int_{s-1}^{s}h_{t}\left(X\left(t\right)\right)dM\left(t\right)\right|\\
 & \lesssim & \frac{1}{\sqrt{S}}\max_{s\leq S}\left|\int_{s-1}^{s}dN\left(t\right)\right|+\frac{1}{\sqrt{S}}\max_{s\leq S}\left|\int_{s-1}^{s}d\Lambda\left(t\right)\right|\\
 & = & \frac{1}{\sqrt{S}}\max_{s\leq S}\left[N\left(s\right)-N\left(s-1\right)\right]+\frac{1}{\sqrt{S}}\max_{s\leq S}\Lambda\left(\left[s-1,s\right]\right)
\end{eqnarray*}
where the inequality uses the fact that $h_{t}$ is bounded. The last
term on the r.h.s. is $O_{p}\left(S^{-1/2}\right)$. A counting process
$N$ is increasing with the intensity. Since $\lambda\left(X\left(s\right)\right)\leq e^{\bar{g_{0}}}$
uniformly in $s$, there is a counting process $N'$ with intensity
density $e^{\bar{g_{0}}}$ such $\Pr\left(N\left(s\right)>n\right)\leq\Pr\left(N'\left(s\right)>n\right)$.
In consequence, for any $s$, $\mathbb{E}\left[N\left(s\right)-N\left(s-1\right)\right]^{4}\leq\mathbb{E}\left[N'\left(s\right)-N'\left(s-1\right)\right]^{4}\leq C$
for some absolute constant $C$ that depends on $\bar{g}_{0}$ only.
The last inequality follows because $N'$ is Poisson with intensity
$e^{\bar{g_{0}}}$. By these remarks, 
\begin{eqnarray*}
\mathbb{E}\frac{1}{\sqrt{S}}\max_{s\leq S}\left[N\left(s\right)-N\left(s-1\right)\right] & \leq & \frac{1}{\sqrt{S}}\left(\mathbb{E}\max_{s\leq S}\left|N\left(s\right)-N\left(s-1\right)\right|^{4}\right)^{1/4}\\
 & \leq & \frac{1}{\sqrt{S}}\left(\sum_{s=1}^{S}\mathbb{E}\left|N\left(s\right)-N\left(s-1\right)\right|^{4}\right)^{1/4}
\end{eqnarray*}
bounding the maximum by the sum. Deduce that the above is $\left(C/S\right)^{1/4}=o\left(1\right)$.
This verifies the third condition (iii.) required for the application
of Theorem 2.3 in McLeish (1974). 

If $S$ is not an integer, write $\left\lfloor S\right\rfloor $ for
its integer part. Then, 
\[
\frac{1}{\sqrt{S}}L_{S}\left(g,g'\right)=\left(\frac{\left\lfloor S\right\rfloor }{S}\right)^{1/2}\frac{1}{\sqrt{\left\lfloor S\right\rfloor }}\sum_{s=1}^{\left\lfloor S\right\rfloor }Y_{s}+\frac{1}{\sqrt{S}}\int_{\left\lfloor S\right\rfloor }^{S}h_{t}\left(X\left(t\right)\right)dM\left(t\right).
\]
Clearly, $\left\lfloor S\right\rfloor /S\rightarrow1$. Moreover,
by arguments similar to the ones used to verify the third condition
(iii.) above, deduce that the last term on the r.h.s. is $o_{p}\left(1\right)$.
This shows the result using $\sigma_{S}$ as scaling sequence rather
than $\hat{\sigma}_{S}$. However, $\left|\hat{\sigma}_{S}^{2}-\sigma_{S}^{2}\right|=\left|\frac{1}{S}\int_{0}^{S}h_{t}^{2}\left(X\left(t\right)\right)dM\left(t\right)\right|\rightarrow0$
a.s., and we can use $\hat{\sigma}_{S}^{2}$ to define the t-statistic.
This completes the proof.

\section{Details Regarding Section \ref{Section_SimulationHawkesManyCovariates}
\label{Section_additionalHawkesSimulationDetails}}

Define $Y_{i}:=\exp\left\{ g_{0}\left(X\left(T_{i}\right)\right)\right\} $
and $Z_{i}:=\sum_{T_{j}\leq T_{i}}e^{-a_{0}\left(T_{i}-T_{j}\right)}$,
and recall $R\left(T_{i+1}\right)=T_{i+1}-T_{i}$. Note that for $t\in(T_{i},T_{i+1}]$,
$\lambda\left(t\right)=\left(c_{0}+Z_{i}e^{-a_{0}\left(t-T_{i}\right)}\right)Y_{i}$.
In consequence, 
\[
\Lambda\left((T_{i},T_{i+1}]\right)=\int_{T_{i}}^{T_{i+1}}\lambda\left(t\right)dt=\left[c_{0}R\left(T_{i+1}\right)+\frac{Z_{i}}{a_{0}}\left(1-e^{-a_{0}R\left(T_{i+1}\right)}\right)\right]Y_{i}
\]
is mean one, exponentially distributed, conditioning on $\mathcal{F}_{i}:=\left(T_{i},Z_{i},Y_{i}\right)$.
Moreover, $Z_{i}=Z_{i-1}e^{-a_{0}\left(T_{i}-T_{i-1}\right)}+1$ with
$Z_{0}=1$. Hence, define $c_{1}=c_{0}Y_{i}$, $c_{2}=Y_{i}Z_{i}$,
and simulate i.i.d. $\left[0,1\right]$ uniform random variables $U_{i}$'s.
We simulate $R\left(T_{i}\right)$ setting it equal to the $s$ that
solves $c_{1}s+\frac{c_{2}}{a_{0}}\left(1-e^{-a_{0}s}\right)=-\ln U_{i}$.

Given an initial guess $\left(2,1.5\right)$ of $\left(c_{0},a_{0}\right)=\left(2,1.3\right)$
we estimate $\exp\left\{ g_{T}\left(X\left(t\right)\right)\right\} $.
Given $\exp\left\{ g_{T}\left(X\left(t\right)\right)\right\} $ we
estimate $c$ and $a$ in $\left(c+\sum_{T_{i}<t}e^{-a\left(t-T_{i}\right)}\right)\exp\left\{ g_{T}\left(X\left(t\right)\right)\right\} $.
We perform a second iteration. 

Estimation of $g$ is done using the algorithm in Section \ref{Section_EstimationDetails}.
In this case, the relevant part of the likelihood is 
\[
\sum_{i=1}^{n}g\left(T_{i-1}\right)-\sum_{i=1}^{n}\exp\left\{ g\left(T_{i-1}\right)\right\} \Delta_{i}
\]
where 
\[
\Delta_{i}=cR\left(T_{i}\right)+\frac{Z_{i-1}}{a}\left(1-e^{-aR\left(T_{i}\right)}\right)
\]
and $c$ and $a$ are set to their guess/estimated values. Estimation
of $c$ and $a$ is via maximum likelihood given $\exp\left\{ g_{T}\left(X\left(t\right)\right)\right\} $. 
\end{document}